\documentclass[11pt]{article}
\usepackage[a4paper]{geometry}
\usepackage{amsmath,amsthm,amssymb}
\usepackage{color,latexsym}
\usepackage[shortlabels]{enumitem}
\usepackage{amsmath,amssymb,amsfonts,amssymb}
\usepackage{graphicx}
\usepackage{float}
\usepackage[authoryear]{natbib}
\usepackage{centernot}
\usepackage{mathrsfs}
\usepackage{amsthm}
\usepackage{ulem}
\usepackage{authblk}
\usepackage{autobreak}
\theoremstyle{plain} 
\newtheorem{theo}{Theorem}[section]   
\AfterEndEnvironment{theorem}{\@aftertheorem}
\newtheorem{coro}[theo]{Corollary}
\newtheorem{lem}{Lemma}[section]
\theoremstyle{definition}

\newtheorem{assu}{Assumption}[section]
\newtheorem{remark}{Remark}[section]

\newcommand{\e}{\mathbb{E}}

\newcommand{\p}{\mathbb{P}}

\newcommand{\R}{\mathbb{R}}

\newcommand{\bdx}{\boldsymbol{x}}

\newcommand{\bdxi}{\boldsymbol{\xi}}

\newcommand{\bdS}{\boldsymbol{S}}
\newcommand{\bdbe}{\boldsymbol{\beta}}
\newcommand{\bbdbe}{\boldsymbol{\bar{\beta}}}
\newcommand{\hbdbe}{\boldsymbol{\hat{\beta}}}

\newcommand{\sgn}{\operatorname{sign}}
\usepackage {chngcntr}
\newenvironment{proof*}[1][Proof]{\par
  \pushQED{}
  \normalfont
  \topsep6\p@\@plus6\p@\relax
  \trivlist
  \item[\hskip\labelsep\itshape #1\@addpunct{.}]
}{\popQED\endtrivlist\@endpefalse}
\numberwithin{equation}{section}
\usepackage{color}
\usepackage{graphicx}
\usepackage{hyperref}
\hypersetup{colorlinks=true,
            linkcolor=blue,
            anchorcolor=blue,
            citecolor=blue}
\usepackage{cleveref}
\crefname{lem}{Lemma}{Lemmas}

        \title{\bf\LARGE{Berry--Esseen Bounds and Moderate Deviations for Catoni-Type Robust Estimation \footnote{Equal Contributions}}} 
        \author{ZhiJun Cai\thanks{Department of Statistics and Data Science, Southern University of Science and Technology, ShenZhen, Chain. Email: 122312911@mail.sustech.edu.cn} \and Xiang Li\thanks{Department of Statistics and Data Science, Southern University of Science and Technology, ShenZhen, Chain. Email: lixiang3@sustech.edu.cn} \and Lihu Xu\thanks{Department of Statistics and Probability, Michigan State University, East Lansing, MI, 48824, USA.\\
        Department of Mathematics, Faculty of Science and Technology, University of Macau, Avenida da Universidade
        Taipa, Macau, China. Email: xulihu@msu.edu}}
        \date{ }
\begin{document}
\maketitle

\begin{abstract}
A powerful robust mean estimator introduced by Catoni (2012) allows for mean estimation of heavy-tailed data while achieving the performance characteristics of classical mean estimator for sub-Gaussian data. While Catoni’s framework has been widely extended across statistics, stochastic algorithms, and machine learning, fundamental asymptotic questions regarding the Central Limit Theorem and rare event deviations remain largely unaddressed. In this paper, we investigate Catoni-type robust estimators in two contexts: (i) mean estimation for heavy-tailed data, and (ii) linear regression with heavy-tailed innovations. For the first model, we establish the Berry--Esseen bound and moderate deviation principles, addressing both known and unknown variance settings. For the second model, we demonstrate that the associated estimator is consistent and satisfies a multi-dimensional Berry-Esseen bound.   
\end{abstract}
\noindent\textbf{Keywords:} Catoni estimato; heavy-tailed distributions; Berry–Esseen bound; 
moderate deviations; robust regression.

\noindent\textbf{MSC 2020:} Primary 62G20, 62E20; Secondary 60F05,62J05

\tableofcontents 
\section{Introduction}
Parameter estimation is a fundamental task in statistics. Among such tasks, estimating the mean of a distribution is  one of the most basic and central problems. Let $X_1, \dots, X_n$ be independent and identically distributed (i.i.d.) random variables with expectation $\mu$, the parameter of interest. A standard estimator of $\mu$ is the sample mean, defined by
$
\bar{X} = \frac{1}{n} \sum_{i=1}^n X_i.
$
However, the sample mean is highly sensitive to extreme values and may be suboptimal for heavy-tailed distributions. To mitigate the variance inflation and instability caused by outliers under heavy tails, \cite{Huber1964} introduced M-estimators based on the Huber loss, striking a tunable balance between efficiency and robustness. Building on this line, 
\cite{Catoni2012Challenging} proposed a new robust estimator of the mean, defined as the solution $\hat{\theta}$ to
\begin{align}\label{eq-t-02}
\sum_{i=1}^n \varphi\big[\alpha_{n}(X_i - \theta)\big] = 0,
\end{align}
where $\alpha_{n}$ is a positive tuning parameter,  and $\varphi(x)$ is a non-decreasing and continuous function satisfying that \begin{align}\label{eq-pa-01} -\log(1-x+x^{2}/2)  \leq  \varphi(x)\leq \log(1+x+x^{2}/2),\quad \forall x\in \R. 
\end{align}
Importantly, \cite{Catoni2012Challenging} developed a finite-sample analytical framework for robust mean estimation, establishing nonasymptotic guarantees that highlight the advantages of robust estimators and the limitations of the sample mean under heavy tails. 
\par
Since Catoni's seminal work, many related studies have appeared, including applications to empirical risk minimization under unbounded losses \citep{Brownlees-Joly-Lugosi2015Empirical}, investigations of cases with finite $1+\delta$ ($0<\delta<1$) moments \citep{chen-Jin-Li-Xu2019generalized}, extensions to confidence sequences \citep{wang2023,LiWang2022}, as well as other related works, see \cite{Huber2019} for stochastic algorithm, \cite{sun-zhou-and-fan,FaLiWa2017} for high dimensional statistics, and \cite{XYYZ2023} for robust estimation in the deep neural network.
\par
However, despite the many subsequent studies, the asymptotic properties of Catoni-type estimators have rarely been investigated. \cite{Yao2022} analyzed the asymptotic behavior of $\hat{\theta}$ in a particular setting. Specifically, they considered the special case $\varphi(x) = \varphi_{1}(x): = \text{sign}(x)\log(1+|x|+x^{2}/2)$ under the assumptions $X_{1}-u \stackrel{d}{=} u-X_{1}$ and $\alpha_{n} = O(n^{-1/2})$, and proved that
\begin{align}\label{eq-pa-02}
  \frac{1}{\sqrt{\e [\varphi_{1}^{2}(\alpha_{n}(X_{1}-u))]/\alpha_{n}^{2}}} 
  \sqrt{n}(\hat{\theta}-u) \stackrel{d}{\longrightarrow} N(0,1).
\end{align}
However, the symmetry assumption is overly restrictive, and qualitative results such as the central limit theorem offer limited guidance for quantitative calibration or tuning. In fact, in the absence of symmetry, the bias induced by Catoni-type estimators becomes apparent (see the main theorems in Section \ref{section:main results}). Consequently, once the symmetry assumption is removed, further quantitative analyses such as Berry–Esseen bounds and moderate deviation results become considerably more valuable.

Berry–Esseen bounds and moderate deviation theory offer practical, finite-sample guidance for Gaussian-style inference. Berry–Esseen bounds tell us how accurate normal approximations are for standardized estimators and common test statistics, so we can judge confidence-interval coverage and test size in real samples. Moderate deviation results describe mid-tail probabilities beyond the core CLT range, improving p-values near typical cutoffs and signaling when normal critical values remain trustworthy or need slight adjustment. Recent studies across diverse settings attest to their sustained centrality in statistical research—for example, non-normal approximation \citep{shao-and-zhang2019,ShaoZhangZhang2023};  high-dimensional normal approximation \citep{Kato2013,Kato-Koike2022Improved,fangandkoike2024}; self-normalized processes
\citep{shao-and-jing(2003),shao-zhou2016,gao-shao-shi2022-refined}; exponential random graph models \citep{ChatterjeeDiaconis2013, Fang-Liu-Shao-zhao2025-exponential}.

In this paper, we establish the asymptotic properties of $\hat{\theta}$ for arbitrary functions satisfying \eqref{eq-pa-01}, \emph{without} the condition $X_{1}-u \stackrel{d}{=} u-X_{1}$. Our results include  Berry–Esseen bounds as well as Cram\'{e}r-type moderate deviation results. 
At the same time, recognizing that the robustness tuning parameter in Catoni’s estimator can be inconvenient to select when the variance is unknown, we also study Cramér-type moderate deviations for a self-normalized formulation of Catoni’s estimator. Specifically, the estimator we analyze is $\hat{\theta}_{s}$ defined as the solution to the following equation
\begin{align}\label{eq-pq-02}
  \frac{\hat{\sigma}}{n a_{n}}\sum_{i=1}^{n}\varphi\Big[\frac{X_{i}-\theta}{\hat{\sigma}}a_{n}\Big]=0,
\end{align}
with $ \hat{\sigma}^{2}=\frac{1}{n-1}\sum_{i=1}^{n} (X_{i}-\bar{X})^{2}$ and tuning parameter $a_{n}$.


\par

As a generalization of mean estimation, we also consider the following regression model. 
Let $(y_1,\bdx_1),\dots,(y_n,\bdx_n)\in \mathbb{R}\times \mathbb{R}^p$ be independent data samples such that
\begin{align}\label{eq-p-01}
  y_i = \bdx'_i \boldsymbol{\beta}^* + \varepsilon_i, \quad 1\le i\le n,
\end{align}
where $\varepsilon_1, \dots, \varepsilon_n$ are independent random variables with $\mathbb{E}[\varepsilon_i] = 0$ and $\mathbb{E}[\varepsilon_i^2] = \sigma_i^2 < \infty$, and are independent of $\{\bdx_1, \dots, \bdx_n\}$.
Here, the parameter of interest is the regression coefficient vector $\boldsymbol{\beta}^*$. 
We estimate $\boldsymbol{\beta}^*$ by solving the estimating equation
\begin{align}\label{eq-p-03}
h(\bdbe):= \frac{1}{n\alpha_{n}}\sum_{i=1}^{n}\bdx_{i}\,\varphi\!\left[\alpha_{n}\bigl(y_{i}-\bdx'_{i} \boldsymbol{\beta}\bigr)\right] = 0,
\end{align}
where $\alpha_{n}$ is a sequence of constants depending on $n$ and $\varphi$ is a function satisfying (\ref{eq-pa-01}). 
We denote the resulting estimator by $\hbdbe$. In this paper, we establish both non-asymptotic and asymptotic results for $\hbdbe-\bdbe^{*}$. 

Our main contributions can be summarized as follows: (i) For mean estimation, we derive general Berry--Esseen bounds and Cram\'{e}r-type moderate deviation results. Notably, unlike classical studies that require certain restrictions on the derivative of $\e\{\varphi[\alpha_{n}(X_{1}-\cdot\,)]\}$ (e.g., \cite{bentkus1997berry,JURECKOVA1988191}), we do not assume any such conditions. When the variance is unknown, we also establish Berry--Esseen bounds and moderate deviation results for the corresponding self-normalized statistics, under the assumption of a finite $2+\delta$ moment.
 (ii) We extend the Catoni-type mean estimation to regression models and study the properties of this estimator. The results show that the deviations of such estimator have the same order as in the Gaussian sample case. Moreover, under some additional mild conditions, we  also develop the asymptotic theory of this estimator.

\par
The rest of this paper is organized as follows. Section \ref{section:main results} presents our main results in two parts. 
The first part focuses on asymptotic results for mean estimation, including Berry--Esseen bounds and Cram\'{e}r-type moderate deviations for both known and unknown variance. 
The second part covers results for Catoni-type regression, including   non-asymptotic results and  Berry--Esseen bounds. 
Section \ref{section:proofs-of-mean-estimation} contains the proofs of main results for mean estimation, while the proofs of main results for Catoni-type robust regression are postponed to  Section \ref{section-proof-of-catoni-type}.

\section{Main Results}\label{section:main results}
In this section, we present our results, which are divided into two parts: mean estimation and regression models.

\subsection{Berry--Esseen bounds (BEDs) and moderate deviations (MDs) for mean estimation}
\subsubsection{BEDs and MDs for mean estimation with known variance}
In this subsection, we focus on $\hat{\theta}$, which is defined as the solution to equation (\ref{eq-t-02}). 
 For any fixed $n$, let $u_{n}$ be any solution to equation 
\begin{align*}
  f(x)=\e\{\varphi[\alpha_{n}(X_{1}-x)]\}=0.
\end{align*}
We make the following assumption.
\begin{assu}\label{assu-mean-CLT}
   We assume that 
  \begin{enumerate}[(a)] 
  \item \label{assu-mean-clt-a} $\varphi(x)$ is a  continuous  and non-decreasing function satisfying that
  \begin{align}\label{eq-t-04}
     -\log\big(1-x+x^{2}/2\big)  \leq  \varphi(x)\leq \log\big(1+x+x^{2}/2\big)\quad \text{for any } x\in \R.
      \end{align}
  \item \label{assu-clt-b} Let $\alpha_{n}=a_{n}\sigma^{-1}$ 
  where $\sigma^{2}=\mathrm{Var}(X_{1}), a_{n}>0$. Then there exists  a non-negative constant $C_{0}<\infty$ such that
  \begin{align*}
    \limsup_{n\to \infty}\sqrt{n}a_{n}\leq C_{0}.
  \end{align*}
\end{enumerate}
\end{assu}
The following theorem is one of our main results.
\begin{theo}\label{thm-mean-CLT}
  Under Assumption \ref{assu-mean-CLT},   we have
  \begin{align}\label{eq-thm-mean-CLT}
  \sup_{z\in \R}\big|\p\big(\sqrt{n}(\hat{\theta}-u_{n})/\sigma\leq z\big)-\Phi(z)\big|\leq C\beta_{2}+C\beta_{3},
  \end{align}
  where $C$ is a positive  constant depending on the distribution of $X_{1}$ and 
\[
\beta_{2}=\frac{1}{\sigma^{2}}\e\big\{ |X_{1}-u|^{2}\mathbf{1}(|X_{1}-u|\geq \sqrt{n}\sigma)\big\},\quad \beta_{3}=\frac{1}{\sqrt{n}\sigma^{3}}\e\big\{ |X_{1}-u|^{3}\mathbf{1}(|X_{1}-u|\leq \sqrt{n}\sigma)\big\}.
\]
\end{theo}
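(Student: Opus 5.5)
Since $\varphi$ is non-decreasing, the map $\theta\mapsto \sum_{i}\varphi[\alpha_n(X_i-\theta)]$ is non-increasing. We therefore first reduce the distribution of $\hat\theta$ to that of a sum of i.i.d. variables. Up to the usual care with non-uniqueness of the root, for every $z$ we have
\[
\Bigl\{\sum_{i=1}^n \varphi[\alpha_n(X_i-\theta_z)]<0\Bigr\}\subseteq\{\hat\theta\le \theta_z\}\subseteq\Bigl\{\sum_{i=1}^n \varphi[\alpha_n(X_i-\theta_z)]\le 0\Bigr\},\qquad \theta_z:=u_n+z\sigma/\sqrt n .
\]
Write $\xi_{i}(z)=\varphi[\alpha_n(X_i-\theta_z)]/\alpha_n$, with mean $m(z)=f(\theta_z)/\alpha_n$ and variance $s^2(z)$. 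Then $\p(\sqrt n(\hat\theta-u_n)/\sigma\le z)$ is sandwiched between probabilities of the form $\p\bigl(\sum_i(\xi_i-m)/(\sqrt n s)\le -\sqrt n m(z)/s(z)\bigr)$, with strict and non-strict inequalities. The classical Berry–Esseen bound for non-identically-bounded summands, in the truncated form $\sup_x|\p(\cdot\le x)-\Phi(x)|\le C(\e \xi^2\mathbf 1(|\xi|>\sqrt n s)+\e|\xi|^3\mathbf 1(|\xi|\le\sqrt n s)/\sqrt n)/s^{2,3}$, then controls the sums. Because $|\varphi(x)|\le \log(1+|x|+x^2/2)\lesssim \min(|x|,\log(1+|x|))$, we get $|\xi_i|\le C|X_i-\theta_z|$. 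The truncated moments of $\xi_i$ are therefore bounded by $C(\beta_2+\beta_3)$ plus shift terms coming from $|\theta_z-u|$. This step also uses that $u_n-u=O(\alpha_n\sigma^2)=O(n^{-1/2})$ by Assumption \ref{assu-mean-CLT}(b). That bound follows from Catoni's inequality $f(u)/\alpha_n\in[-\alpha_n\sigma^2/2,\alpha_n\sigma^2/2]$ combined with the linearization below.

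It remains to show, uniformly in $z$ in a suitable range, that $|\sqrt n m(z)/s(z)+z|\lesssim (\beta_2+\beta_3)(1+|z|)$ and $|s(z)/\sigma-1|\lesssim \beta_2+\beta_3+|z|/\sqrt n$ (for instance). The standard estimate $|\Phi(x(1+\epsilon)+\delta)-\Phi(x)|\lesssim \epsilon+\delta$ then closes the argument. For $|z|\gtrsim \sqrt{\log n}$ or larger, we instead use monotonicity together with a Chebyshev/Bernstein-type tail bound for $\hat\theta$. Catoni's deviation inequality gives $\p(|\hat\theta-u_n|>t\sigma/\sqrt n)$ small, and we handle these $z$ directly, noting that $1-\Phi(z)$ is tiny there. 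The key expansion is
\[
m(z)=\frac{f(\theta_z)-f(u_n)}{\alpha_n}= -\frac{z\sigma}{\sqrt n}\,\e\Bigl\{\int_0^1 \varphi'\bigl(\alpha_n(X_1-u_n)-t\alpha_n z\sigma/\sqrt n\bigr)dt\Bigr\}.
\]
Here $\varphi'$ is understood via monotone differences, because no differentiability is assumed. We compare this with $-z\sigma/\sqrt n$ using the sandwich $-\log(1-x+x^2/2)\le\varphi(x)\le\log(1+x+x^2/2)$.

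I expect this last comparison to be the main obstacle. The paper explicitly avoids assumptions on the derivative of $f$, so I cannot simply Taylor-expand $\varphi$. My first attempt will use only the envelope: both bounding functions equal $x+O(x^2\wedge|x|^3...)$. More precisely, $|\varphi(x)-x|\le C\min(x^2,|x|)$, since the two envelopes agree with $x$ up to order $x^2$ near $0$ and grow logarithmically. Writing $\varphi(x)=x+r(x)$, we get
\[
\frac{f(\theta)}{\alpha_n}=(u-\theta)+\frac{\e r(\alpha_n(X_1-\theta))}{\alpha_n}.
\]
The remainder difference between $\theta_z$ and $u_n$ is then bounded crudely, via $|r(x)|\le C\min(x^2,|x|)$ split at $|X_1-u|\le\sqrt n\sigma$, by $C\alpha_n\e[(X_1-u)^2\wedge \ldots]$. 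This crude bound must be shown to be $o$ of the main term $z\sigma/\sqrt n$, with error $(\beta_2+\beta_3)$. This is where $\sqrt n a_n\le C_0$ enters. Since $\alpha_n\sigma\cdot\sqrt n\sigma\le C_0\sigma$, the truncation level $1/\alpha_n\gtrsim\sqrt n\sigma$ matches the truncation in $\beta_2,\beta_3$. If the crude absolute bound loses a factor (it gives $O(\alpha_n\sigma^2)$ rather than $O(|z|\alpha_n\sigma^2/\sqrt n)$), I will instead use monotonicity of $\varphi$ and the fact that $r$ is Lipschitz on compacts away from the envelopes only through the envelope bound. Concretely, I will bound $|r(x)-r(y)|\le |x-y|\cdot C(|x|+|y|)\wedge \ldots$ by interpolating the monotone $\varphi$ between envelope values. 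This yields the needed relative error $C\alpha_n\e|X_1-u|\mathbf 1(\cdot)+\beta_2$, which is controlled by $\beta_3$ and $\beta_2$ through Hölder.
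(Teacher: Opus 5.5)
There are two gaps. The first is in the step you yourself flag as the main obstacle, and the inequality you propose there is false. The second is in how you handle large $|z|$. The reduction itself is sound: the monotone sandwich at $\theta_z=u_n+z\sigma/\sqrt n$, followed by a truncated-moment Berry--Esseen bound applied directly to $\xi_i(z)-m(z)$, works. The paper instead first truncates $X_i$ at $\sqrt n\sigma$ and pays $\beta_2$.

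\textbf{Linearization.} The envelope condition gives no control over difference quotients of $\varphi$. The band between $-\log(1-x+x^2/2)$ and $\log(1+x+x^2/2)$ has width $\log(1+x^4/4)>0$ at every $x\neq 0$. A continuous non-decreasing $\varphi$ may cross that band over an arbitrarily short interval, so $|r(x)-r(y)|$ can stay of order $x^4$ while $|x-y|\to 0$.

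\begin{itemize}
\item Hence no bound $|r(x)-r(y)|\le |x-y|\,g(x,y)$ with $g$ locally bounded follows from the assumptions.
\item So ``interpolating between envelope values'' cannot make the error in $m(z)$ proportional to $|z|$.
\item Any valid two-point estimate must carry an additive term independent of $|x-y|$. That is exactly the $\log(1+x_1^2x_2^2/2)$ term in Lemma~\ref{lem-1}, which the paper bounds by $2|x_1x_2|^{3/2}$ when estimating $Q_{n,2}$.
\end{itemize}

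The repair is that proportionality is not needed. Your own target $|\sqrt n\, m(z)/s(z)+z|\lesssim (1+|z|)(\beta_2+\beta_3)$ tolerates an additive error of size $(\beta_2+\beta_3)\sigma/\sqrt n$ in $m(z)$. Your crude subtraction already achieves this if you keep the cubic order you first wrote down, rather than downgrading to $\min(x^2,|x|)$.

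\begin{itemize}
\item Both envelopes equal $x-x^3/6+O(x^4)$, and in fact $|\varphi(x)-x|\le 2\min(|x|^3,x^2)$ for all $x$.
\item Split at $|X_1-u|=\sqrt n\sigma$. For $\theta\in\{u_n,\theta_z\}$ and $|z|\le\sqrt{\ln n}$ this gives $\frac{\sqrt n}{\sigma\alpha_n}\e|r(\alpha_n(X_1-\theta))|\lesssim n a_n^2\beta_3+\sqrt n a_n\beta_2$, plus shift terms of order $(1+|z|)^3n^{-2}$. This is $O(\beta_2+\beta_3)$ because $\sqrt n a_n$ is bounded.
\item Subtract the identity $0=f(u_n)/\alpha_n=(u-u_n)+\e r(\alpha_n(X_1-u_n))/\alpha_n$ from the expansion of $m(z)$. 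This yields $\frac{\sqrt n}{\sigma}|m(z)+z\sigma/\sqrt n|\le C(\beta_2+\beta_3)$ directly, with no two-point inequality.
\end{itemize}

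\textbf{Large $|z|$.} The tail inequalities you invoke do not reach the required rate. Chebyshev gives at best $O(1/\ln n)$ at $|z|=\sqrt{\ln n}$. Catoni's deviation inequality, with the fixed tuning $\alpha_n=a_n/\sigma$, gives at best a bound of order $\exp(-c\sqrt n a_n t)$ for $\p(|\hat\theta-u_n|>t\sigma/\sqrt n)$. At $t=\sqrt{\ln n}$ this is far larger than $n^{-1/2}$. It is vacuous if $\sqrt n a_n\to 0$, which is permitted since only $\limsup\sqrt n a_n\le C_0$ is assumed. Meanwhile $\beta_2+\beta_3$ can be of order $n^{-1/2}$.

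No tail bound for $\hat\theta$ is needed. Let $T=\sqrt n(\hat\theta-u_n)/\sigma$ and $z\ge\sqrt{\ln n}$. Monotonicity gives $|\p(T>z)-(1-\Phi(z))|\le|\p(T>\sqrt{\ln n})-(1-\Phi(\sqrt{\ln n}))|+2(1-\Phi(\sqrt{\ln n}))$. So the bound already proved at the endpoint, together with $1-\Phi(\sqrt{\ln n})\le n^{-1/2}$, suffices; the case $z\le-\sqrt{\ln n}$ is symmetric.

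Both here and when absorbing the shift terms in your Berry--Esseen step, you need $n^{-1/2}\le C(\beta_2+\beta_3)$, which you never state. It follows from H\"older's inequality, which gives $\sqrt n\,\beta_3\ge(1-\beta_2)^{3/2}$.
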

\begin{remark}
  We first make some remarks on $u_{n}.$ Here, we subtract $u_{n}$ rather than the expectation of $X_{1}$ when performing the standardization. This is because 
$\hat{\theta}$ is a biased estimator of $u$ (at least in most cases). More precisely, similar to the case of $M$-estimators, the estimator 
$\hat{\theta}$ should be regarded as an estimate of $u_{n}.$ In some special cases, such as when condition \( X_{1}-u\stackrel{d}{=}u-X_{1} \) holds and $\varphi(x)=-\varphi(-x)$, we have \( u_{n} = u \). In the general case, however, \( u_{n} \neq u \), and Lemma \ref{lem-2} implies that
\begin{align}\label{eq-t-07}
 |u_{n}-u|\leq \frac{\alpha_{n}\sigma^{2}}{\sqrt{1-\alpha_{n}^{2}\sigma^{2}}}.
\end{align}
\end{remark}
Observe that for any $0<\varepsilon<1$, we have
\begin{align}\label{eq-nn-0}
  \beta_{2}+\beta_{3}
  &=\frac{1}{\sigma^{2}}\e\big\{ |X_{1}-u|^{2}\mathbf{1}(|X_{1}-u|\geq \sqrt{n}\sigma)\big\}\nonumber\\
  &\quad+\frac{1}{\sqrt{n}\sigma^{3}}\e\big\{ |X_{1}-u|^{3}\mathbf{1}(|X_{1}-u|\leq \varepsilon\sqrt{n}\sigma)\big\}\nonumber\\
  &\quad+
  \frac{1}{\sqrt{n}\sigma^{3}}\e\big\{ |X_{1}-u|^{3}\mathbf{1}(\varepsilon\sqrt{n}\sigma< |X_{1}-u|\leq \sqrt{n}\sigma)\big\}
  \nonumber\\
  &\leq \frac{1}{\sigma^{2}}\e\big\{ |X_{1}-u|^{2}\mathbf{1}(|X_{1}-u|> \varepsilon\sqrt{n}\sigma)\big\}+\varepsilon.
\end{align}
Since $\varepsilon$ is arbitrary, then by Lebesgue dominated convergence theorem, $\beta_{2}+\beta_{3}\to 0$ as $n\to \infty.$ So we immediately have the following corollary. 
\begin{coro}\label{coro-mean-clt} Under Assumption \ref{assu-mean-CLT},   we have
  \[
\sqrt{n}(\hat{\theta}-u_{n})/\sigma\xrightarrow{d} N(0,1)\quad \text{as}\quad n\to\infty.
\]
Moreover, if $\sqrt{n}a_{n}\to 0$ as $n\to \infty,$ then 
  \[
\sqrt{n}(\hat{\theta}-u)/\sigma\xrightarrow{d} N(0,1) \quad \text{as}\quad n\to\infty.
\]
\end{coro}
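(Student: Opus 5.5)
The statement to prove is Corollary \ref{coro-mean-clt}. Its first assertion follows directly from Theorem \ref{thm-mean-CLT} and the bound \eqref{eq-nn-0}; its second assertion follows by a Slutsky-type argument using the bias estimate \eqref{eq-t-07}.

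\textbf{Step 1 (first assertion).} Theorem \ref{thm-mean-CLT} gives
\[
\sup_{z\in\R}\big|\p\big(\sqrt{n}(\hat{\theta}-u_{n})/\sigma\le z\big)-\Phi(z)\big|\le C(\beta_2+\beta_3).
\]
By \eqref{eq-nn-0}, for every $0<\varepsilon<1$,
\[
\beta_2+\beta_3\le \sigma^{-2}\e\big\{|X_1-u|^2\mathbf{1}(|X_1-u|>\varepsilon\sqrt n\sigma)\big\}+\varepsilon.
\]
Since $\e|X_1-u|^2=\sigma^2<\infty$, dominated convergence sends the first term to $0$ as $n\to\infty$ for fixed $\varepsilon$. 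Hence $\limsup_n(\beta_2+\beta_3)\le\varepsilon$. Letting $\varepsilon\downarrow 0$ gives $\beta_2+\beta_3\to0$. The Kolmogorov distance therefore vanishes, which yields convergence in distribution to $N(0,1)$.

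\textbf{Step 2 (second assertion).} Write
\[
\sqrt n(\hat\theta-u)/\sigma=\sqrt n(\hat\theta-u_n)/\sigma+\sqrt n(u_n-u)/\sigma.
\]
The second term is deterministic, so it suffices to show it tends to $0$ and then apply Slutsky's lemma.

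\textbf{Step 3 (bias term).} Since $\alpha_n=a_n/\sigma$, we have $\alpha_n\sigma=a_n$. The hypothesis $\sqrt n a_n\to0$ gives $a_n\to0$, so $1-a_n^2>1/2$ for all large $n$. By \eqref{eq-t-07} (from Lemma \ref{lem-2}),
\[
\sqrt n|u_n-u|/\sigma\le \frac{\sqrt n\,\alpha_n\sigma}{\sqrt{1-\alpha_n^2\sigma^2}}=\frac{\sqrt n\,a_n}{\sqrt{1-a_n^2}}\le \sqrt2\,\sqrt n\,a_n\to0.
\]

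\textbf{Main obstacle.} The only point needing care is that \eqref{eq-t-07} requires $\alpha_n\sigma<1$. This is guaranteed for all large $n$ by $a_n\to0$, and finitely many $n$ do not affect a limit statement. The rest is routine, since all the substantive work is in Theorem \ref{thm-mean-CLT} and Lemma \ref{lem-2}.
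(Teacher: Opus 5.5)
Your proposal is correct and follows essentially the same route as the paper. Theorem \ref{thm-mean-CLT} together with \eqref{eq-nn-0} and dominated convergence gives $\beta_2+\beta_3\to 0$. For the recentering at $u$, the bias bound \eqref{eq-t-07} with $\alpha_n\sigma=a_n$ and Slutsky's lemma complete the argument. The paper leaves that last step implicit, and you fill it in correctly.
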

\begin{remark}
Using the fact that $|\varphi(x)|\leq x$ and $|\varphi(x)-x|\leq x^{2}$ yields that $|\varphi^{2}(x)-x^{2}|\leq 4x^{2}\mathbf{1}(|x|\geq 1)+2x^{3}\mathbf{1}(|x|\leq 1)$, which further implies that
\begin{align}\label{eq-t-3.1}
    &\big|\e [\varphi^{2}(\alpha_{n}(X_{1}-u))]/\alpha_{n}^{2}-\sigma^{2}\big|\nonumber\\
    &\quad\leq 4\e\big\{|X_{1}-u|^{2}\mathbf{1}(\alpha_{n}|X_{1}-u|\geq 1)\big\}+2\alpha_{n}\e\big\{|X_{1}-u|^{3}\mathbf{1}(\alpha_{n}|X_{1}-u|\leq 1)\big\}\nonumber\\
    &\quad\longrightarrow 0 \quad \text{as } n\to \infty.
\end{align}
From this perspective, our result covers the result of \cite{Yao2022}.
\end{remark}

Combine Theorem \ref{thm-mean-CLT} and (\ref{eq-t-07}), we obtain the following corollary.
\begin{coro}\label{coro-mean-BE} Under Assumption \ref{assu-mean-CLT}, if $\e|X_{1}-u|^{2+\delta}<\infty$ for some $\delta\in (0,1]$, then 
  \begin{align*}
  \sup_{z\in \R}\big|\p\big(\sqrt{n}(\hat{\theta}-u_{n})/\sigma\leq z\big)-\Phi(z)\big|\leq C n^{-\delta/2}\sigma^{-2-\delta}\e|X_{1}-u|^{2+\delta},
  \end{align*}
  and 
    \begin{align*}
  \sup_{z\in \R}\big|\p\big(\sqrt{n}(\hat{\theta}-u)/\sigma\leq z\big)-\Phi(z)\big|\leq C n^{-\delta/2}\sigma^{-2-\delta}\e|X_{1}-u|^{2+\delta}+C\sqrt{n}a_{n},
  \end{align*}
   where $C$ is a positive  constant depending on the distribution of $X_{1}$.
\end{coro}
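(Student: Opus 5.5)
The corollary follows from Theorem \ref{thm-mean-CLT} by a moment bound on $\beta_{2}+\beta_{3}$, plus a shift argument based on (\ref{eq-t-07}) for the second claim.

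\emph{First claim.} We bound $\beta_{2}$ and $\beta_{3}$ using $\e|X_{1}-u|^{2+\delta}<\infty$. On $\{|X_{1}-u|\geq\sqrt{n}\sigma\}$ we have $|X_{1}-u|^{2}\leq |X_{1}-u|^{2+\delta}(\sqrt{n}\sigma)^{-\delta}$. Hence
\[
\beta_{2}\leq n^{-\delta/2}\sigma^{-2-\delta}\e|X_{1}-u|^{2+\delta}.
\]
On $\{|X_{1}-u|\leq \sqrt{n}\sigma\}$ we have $|X_{1}-u|^{3}\leq |X_{1}-u|^{2+\delta}(\sqrt{n}\sigma)^{1-\delta}$, since $\delta\leq 1$. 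Hence
\[
\beta_{3}\leq n^{-1/2}\sigma^{-3}(\sqrt n\sigma)^{1-\delta}\e|X_{1}-u|^{2+\delta}=n^{-\delta/2}\sigma^{-2-\delta}\e|X_{1}-u|^{2+\delta}.
\]
Substituting these into (\ref{eq-thm-mean-CLT}) gives the first claim, with the constant doubled.

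\emph{Second claim: shift argument.} Set $d_{n}=\sqrt{n}(u_{n}-u)/\sigma$. Then
\[
\p\big(\sqrt{n}(\hat\theta-u)/\sigma\leq z\big)=\p\big(\sqrt{n}(\hat\theta-u_{n})/\sigma\leq z-d_{n}\big).
\]
Taking the supremum over $z$ and using the first claim at the point $z-d_{n}$, the difference from $\Phi(z)$ is at most
\[
Cn^{-\delta/2}\sigma^{-2-\delta}\e|X_{1}-u|^{2+\delta}+\sup_{z}|\Phi(z-d_{n})-\Phi(z)|.
\]
The last term is at most $|d_{n}|/\sqrt{2\pi}$.

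\emph{Bounding $d_n$.} By (\ref{eq-t-07}) and $\alpha_{n}\sigma=a_{n}$,
\[
|d_{n}|\leq \frac{\sqrt{n}\,\alpha_{n}\sigma}{\sqrt{1-a_{n}^{2}}}=\frac{\sqrt{n}\,a_{n}}{\sqrt{1-a_{n}^{2}}}.
\]
By Assumption \ref{assu-mean-CLT}(b), $a_{n}\to 0$. So for $n$ large, say $a_{n}\leq 1/2$, this is at most $2\sqrt n a_{n}$.

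\emph{Main obstacle: small $n$.} The only delicate point is the finitely many $n$ for which $a_{n}$ may not be small, or (\ref{eq-t-07}) is not meaningful because $a_{n}\geq 1$. This is handled by enlarging $C$, which is allowed to depend on the distribution of $X_1$ and hence on the sequence $a_{n}$ through $C_{0}$. The left-hand side is always at most $1$. Moreover, $\sqrt n a_n$ is bounded below by a positive constant on any finite set of $n$ where $a_{n}>1/2$. So choosing $C$ large makes the bound trivially true there.
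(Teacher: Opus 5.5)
Your proposal is correct and is exactly the argument the paper has in mind when it says the corollary follows by combining Theorem~\ref{thm-mean-CLT} with \eqref{eq-t-07}. That argument has two steps: the $(2+\delta)$-moment bounds $\beta_{2},\beta_{3}\le n^{-\delta/2}\sigma^{-2-\delta}\e|X_{1}-u|^{2+\delta}$, and then the shift by $d_{n}=\sqrt{n}(u_{n}-u)/\sigma$ with $|d_{n}|\le \sqrt{n}a_{n}/\sqrt{1-a_{n}^{2}}$. Your treatment of the finitely many $n$ with $a_{n}>1/2$, where \eqref{eq-t-07} may be vacuous but $C\sqrt{n}a_{n}\ge 1$ once $C\ge 2$, is a detail the paper leaves implicit, and you handle it correctly.
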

Theorem \ref{thm-mean-CLT} establishes a Berry--Esseen bound for $\hat{\theta}$, 
theoretically demonstrating that its distribution can be approximated by a normal distribution.
However, the accuracy of this result for the tail probability $\p\big(\sqrt{n}(\hat{\theta}-u_{n})/\sigma> z\big)$ remains limited. 
To address this, we further present a Cramér-type moderate deviation result, 
which yields a more precise estimate for $\p\big(\sqrt{n}(\hat{\theta}-u_{n})/\sigma> z\big)$.
  \begin{theo}\label{thm-mean-md}
    Under Assumption \ref{assu-mean-CLT}, suppose that there exist $t_{0}>0$, $C_{1}>0$ such that 
    \[
    \e e^{t_{0}\sqrt{|X_{1}|}}<C_{1}<\infty,
    \]
  then there exist two positive constant $c_{0}$ and $C$  depending on $t_{0}$, $C_{0}$ and $C_{1}$ such that
    \begin{align}\label{eq-thm-mean-md-1}
      &\Big|\frac{\p\big(\sqrt{n}(\hat{\theta}-u_{n})/\sigma> z\big)}{1-\Phi(z)}-1\Big|\leq C(1+z^{3})n^{-1/2}
    \end{align}
    for $0\leq z\leq c_{0}n^{1/6}$.
  \end{theo}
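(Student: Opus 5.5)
The plan is to use monotonicity of the estimating equation to turn the tail event for $\hat\theta$ into a tail event for a sum of i.i.d.\ bounded-type variables, and then apply a Cramér-type moderate deviation for sums of independent variables. Since $\varphi$ is non-decreasing, $\theta\mapsto\sum_i\varphi[\alpha_n(X_i-\theta)]$ is non-increasing. For $\theta_z=u_n+z\sigma/\sqrt n$ we therefore get the sandwich
\begin{align*}
\p\Big(\sum_{i=1}^n\varphi[\alpha_n(X_i-\theta_z)]>0\Big)\le \p\big(\sqrt n(\hat\theta-u_n)/\sigma> z\big)\le \p\Big(\sum_{i=1}^n\varphi[\alpha_n(X_i-\theta_z)]\ge 0\Big),
\end{align*}
with the usual care about non-uniqueness of the root; taking $\hat\theta$ to be the largest or smallest root gives the two inequalities. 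Put $\xi_i=\varphi[\alpha_n(X_i-\theta_z)]-f(\theta_z)$ and $s_n^2(z)=n\operatorname{Var}(\xi_1)$. The event becomes $\sum_i\xi_i/s_n> -nf(\theta_z)/s_n=:z_n$. The task is then to show two things. First, $z_n=z(1+O((1+z)n^{-1/2}))$, or at least $z_n-z=O((1+z^2)n^{-1/2})$. Second, the i.i.d.\ sum $\sum\xi_i/s_n$ satisfies a Cramér moderate deviation with relative error $C(1+z_n^3)n^{-1/2}$ on $[0,c_0n^{1/6}]$. Together these give the claim, using $(1-\Phi(z_n))/(1-\Phi(z))=1+O((1+z)|z_n-z|)$.

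\textbf{Step 1 (the centering).} Since $f(u_n)=0$, we have $-f(\theta_z)=\int_{u_n}^{\theta_z}\alpha_n\e\varphi'[\alpha_n(X_1-t)]\,dt$ in the a.e.\ sense. Here I must avoid assuming any smoothness of $f$, as the paper stresses. Instead I would compare $\varphi(x)$ with $x$ using $|\varphi(x)-x|\le x^2$, which follows from \eqref{eq-t-04}. Writing $h=z\sigma/\sqrt n$, this gives
\[
f(u_n)-f(u_n+h)=\alpha_n h+\e\{r(\alpha_n(X_1-u_n))-r(\alpha_n(X_1-u_n-h))\},\qquad r(x)=\varphi(x)-x .
\]
The difficulty is bounding the increment of $r$ without a derivative. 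I would use the monotone sandwich
\[
\log(1+x+x^2/2)-\log(1+y+y^2/2)
\]
type bounds, or, more robustly, the monotonicity of $\varphi$ together with the Lipschitz-type control on the two envelope functions, to get $|f(u_n)-f(u_n+h)-\alpha_nh|\le C\alpha_n h(\alpha_n\e|X_1-u|+\alpha_nh)$. With $\alpha_n\lesssim n^{-1/2}$ and $z\le c_0n^{1/6}$, this yields $-nf(\theta_z)=\sqrt n a_n z(1+O((1+z)n^{-1/2}))$. I expect this to be the main obstacle. If the envelope argument fails to control increments of a general monotone $\varphi$ between the bounds, I would fall back on bounding $\e|\varphi(a)-\varphi(b)|$ via monotonicity: $\varphi(a)-\varphi(b)$ has a fixed sign, so its expectation is controlled by integrating the envelopes over a shifted window.

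\textbf{Step 2 (the variance).} I need $s_n^2=n\alpha_n^2\sigma^2(1+O((1+z)n^{-1/2}))$. Using $|\varphi^2(x)-x^2|\le C|x|^3$ together with the moments implied by $\e e^{t_0\sqrt{|X_1|}}<\infty$ (all moments are finite), together with $|f(\theta_z)|=O(\alpha_n(|u_n-u|+h))$ and \eqref{eq-t-07}, the error is $O(\alpha_n+h\alpha_n)$ relative. Hence $z_n=-nf(\theta_z)/s_n=z+O((1+z^2)n^{-1/2})$.

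\textbf{Step 3 (moderate deviation for the sum).} The $\xi_i$ are i.i.d.\ with mean zero, and $|\varphi(x)|\le \log(1+|x|+x^2/2)\le C\min(|x|,\log(2+|x|))$. This makes $\xi_i/(\alpha_n\sigma)$ dominated by $C(|X_1|+1)$ and uniformly sub-Weibull: the assumption $\e e^{t_0\sqrt{|X_1|}}<C_1$ gives a uniform Linnik-type condition $\e\exp(t_0'\sqrt{|\xi_1|/(\alpha_n\sigma)})\le C$ independent of $n$ and $z$. The classical Linnik/Cramér moderate deviation for triangular arrays with such a condition yields $\p(\sum\xi_i/s_n>x)/(1-\Phi(x))=1+O((1+x^3)n^{-1/2})$ uniformly for $0\le x\le c n^{1/6}$. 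I would prove this via the conjugate method: exponential tilt, where the tilted Berry--Esseen bound and the cumulant expansion of $\log\e e^{t\xi_1}$ up to third order are uniform because the constants depend only on $t_0$ and $C_1$. Finally, $z_n\le 2c_0n^{1/6}$ stays in range for small $c_0$. Combining this with Steps 1–2 and the sandwich gives \eqref{eq-thm-mean-md-1}.
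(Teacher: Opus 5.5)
Your overall architecture matches the paper's. You use the monotone sandwich \eqref{eq-ww-01} and reduce to an i.i.d.\ sum with shifted threshold $z_n$. You then need the estimate $|z_n-z|\le C(1+z^2)n^{-1/2}$ (the paper's \eqref{eq-we-05}). Next comes a Linnik-type moderate deviation for the sum, under a uniform bound on $\e\exp(t\sqrt{|\xi_1|/(\alpha_n\sigma)})$ obtained from $|\varphi(x)|\le|x|$. The last step is the Gaussian tail comparison \eqref{eq-we-10}. For the moderate deviation step the paper simply quotes a known result (Proposition 4.6 of its 2013 reference). That is preferable to re-deriving it by exponential tilting, which under a Linnik-type condition normally needs an extra truncation step your sketch omits.

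The step that fails as written is Step 1. A bound $|f(u_n)-f(u_n+h)-\alpha_nh|\le C\alpha_nh(\alpha_n\e|X_1-u|+\alpha_nh)$, with error proportional to $h$, cannot hold for a merely continuous non-decreasing $\varphi$. Condition \ref{assu-mean-clt-a} only confines $\varphi$ to a band of width up to $\log(1+x^4/4)>0$ around each $x\neq0$, and inside that band $\varphi$ may climb arbitrarily steeply. For a discrete $X_1$, $f$ inherits such steep pieces, so the ratio of the left side to $\alpha_nh$ is unbounded as $h\downarrow0$. That limit is in range, since $z$ may be near $0$. Your fallback, as phrased, bounds $\e|\varphi(a)-\varphi(b)|$. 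That is the size of the increment, not its deviation from $\alpha_nh$, so it does not give the needed centering either.

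The fix is to accept an $h$-independent remainder. The function $g(x)=\log(1+x+x^2/2)-x$ satisfies $|g'(x)|=\frac{x^2/2}{1+x+x^2/2}\le x^2$. Hence both envelopes, and therefore $\varphi$, obey $|\varphi(x)-x|\le|x|^3/3$. Since all moments are finite under the hypothesis, $f(\theta)=\alpha_n(u-\theta)+O(\alpha_n^3\e|X_1-\theta|^3)$. Combined with $f(u_n)=0$, this gives $-f(\theta_z)=\alpha_nh+O(\alpha_n^3(1+h^3))$. Because $s_n\asymp\sqrt n\,\alpha_n\sigma$, the remainder moves $z_n$ by only $O(\sqrt n\,\alpha_n^2)=O(n^{-1/2})$, well within $C(1+z^2)n^{-1/2}$. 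The paper achieves the same via Lemma \ref{lem-1}: its $\log(1+x_1^2x_2^2/2)$ term is exactly such an $h$-free remainder (cf.\ \eqref{eq-ww-32}). With this replacement, your Steps 2 and 3 go through as you describe.
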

 Replacing $u_{n}$ with $u$ leads to the following corollary.
  \begin{coro}\label{coro-mean-MD}
 Keep the same assumptions and notations as in Theorem \ref{thm-mean-md},  then  there exist two positive constant $c'_{0}$ and $C$  depending on $t_{0}$ and $C_{1}$ such that
    \begin{align*}
      &\Big|\frac{\p\big(\sqrt{n}(\hat{\theta}-u)/\sigma> z\big)}{1-\Phi(z)}-1\Big|\leq C(1+z^{3})n^{-1/2}+C(1+z)\sqrt{n}a_{n}
    \end{align*}
    for $0\leq z\leq c'_{0}\min\{n^{1/6}, n^{-1/2}a_{n}^{-1}\}$.
\end{coro}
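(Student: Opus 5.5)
Corollary \ref{coro-mean-MD} is a perturbation of Theorem \ref{thm-mean-md}: we shift the threshold from $u_n$ to $u$ and control the ratio of Gaussian tails. Write $\Delta_n=\sqrt{n}(u-u_n)/\sigma$, so that
\[
\p\big(\sqrt{n}(\hat{\theta}-u)/\sigma> z\big)=\p\big(\sqrt{n}(\hat{\theta}-u_{n})/\sigma> z+\Delta_n\big).
\]
By (\ref{eq-t-07}) with $\alpha_n\sigma=a_n$ (which is bounded by $1/2$ for large $n$ under Assumption \ref{assu-mean-CLT}(b)),
\[
|\Delta_n|\le \frac{\sqrt{n}\,a_n}{\sqrt{1-a_n^2}}\le 2\sqrt{n}a_n\le 2C_0+1 .
\]
Small $n$ are absorbed into the constant $C$.

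\textbf{Step 1.} Apply Theorem \ref{thm-mean-md} at the point $z'=z+\Delta_n$. We need $0\le z'\le c_0n^{1/6}$. If $z'<0$, then $z\le|\Delta_n|=O(1)$, and we handle this case separately below. Otherwise $z'\le c'_0n^{1/6}+O(1)\le c_0n^{1/6}$ once $c'_0<c_0/2$ and $n$ is large. The theorem then gives
\[
\p(\cdot> z')=(1-\Phi(z'))\big(1+O((1+z'^3)n^{-1/2})\big).
\]
Since $|\Delta_n|$ is bounded, $1+z'^3\le C(1+z^3)$.

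\textbf{Step 2 (main estimate).} We must show
\[
\Big|\frac{1-\Phi(z+\Delta)}{1-\Phi(z)}-1\Big|\le C(1+z)|\Delta| \qquad\text{whenever } (1+z)|\Delta|\le c .
\]
This uses the Mills-ratio bound $\phi(t)/(1-\Phi(t))\le C(1+t)$ for $t\ge -C$. Write
\[
\log\frac{1-\Phi(z+\Delta)}{1-\Phi(z)}=-\int_z^{z+\Delta}\frac{\phi(t)}{1-\Phi(t)}\,dt ,
\]
whose absolute value is at most $C(1+z+|\Delta|)|\Delta|\le C(1+z)|\Delta|$. The restriction $z\le c'_0n^{-1/2}a_n^{-1}$ is exactly what makes $(1+z)\sqrt{n}a_n$ bounded, so that $|e^x-1|\le C|x|$ applies. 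This is the step I expect to require the most care. The Gaussian tail ratio is not uniformly close to $1$ for large $z$, so the second constraint on $z$ is essential, and the constants $c'_0$ and $C$ must be tracked so that they depend only on $t_0$ and $C_1$, using the bound on $C_0$.

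\textbf{Step 3 (combine).} Multiply the two expansions. The cross term $(1+z^3)n^{-1/2}\cdot(1+z)\sqrt{n}a_n$ is bounded by $C(1+z^3)n^{-1/2}$ because $(1+z)\sqrt{n}a_n$ is bounded. Finally, in the boundary case $z'<0$ with $z=O(1)$, the claimed bound is trivial whenever the right-hand side exceeds a constant. Otherwise, apply the Berry--Esseen estimate of Corollary \ref{coro-mean-BE} (the exponential moment gives finite third moments) together with $1-\Phi(z)\ge c$ on bounded $z$. Combined with $|\Phi(z+\Delta)-\Phi(z)|\le C|\Delta|$, this again yields the bound $Cn^{-1/2}+C\sqrt{n}a_n$.
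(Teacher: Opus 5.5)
Your proposal is correct and follows essentially the same route as the paper: bound the shift $\sqrt{n}(u-u_n)/\sigma=O(\sqrt{n}a_n)$ via Lemma~\ref{lem-2}, apply Theorem~\ref{thm-mean-md} at the shifted point, and compare the Gaussian tails at $z$ and $z'$, with the constraint $z\le c_0' n^{-1/2}a_n^{-1}$ keeping $z|z'-z|$ bounded. The differences are cosmetic: you control the tail ratio through the hazard-rate integral instead of inequality \eqref{eq-we-10}, and you handle bounded $z$ (where $z'$ may be negative) with Corollary~\ref{coro-mean-BE}, whereas the paper restricts to $z\ge 1$ and reduces to $C_4\sqrt{n}a_n\le 1$ so that $z'\ge 0$ automatically.
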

\subsubsection{BEDs and MDs for mean estimation with unknown variance}
In this subsection, we focus on $\hat{\theta}_{s}$, which is defined as the solution of (\ref{eq-pq-02}). We have the following Berry--Esseen bounds for $\hat{\theta}_{s}$.
\begin{theo}\label{thm-self-mean-clt} Under condition \ref{assu-mean-clt-a}, we have
\begin{align}\label{eq-thm-self-mean-clt-01}
  \sup_{z\in \R} \big|\p\big(\sqrt{n}(\hat{\theta}_{s}-u)/\hat{\sigma}\leq z \big)-\Phi(z)\big|\leq C(\beta_{2}+\beta_{3})+C\sqrt{n}a_{n},
\end{align}
where $C$ is an absolute constant and $\beta_{2}$ and $\beta_{3}$
 are as defined in Theorem \ref{thm-mean-CLT}.
\end{theo}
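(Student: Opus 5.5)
Because $\varphi$ is non-decreasing, the map $\theta\mapsto \sum_{i}\varphi[a_n(X_i-\theta)/\hat\sigma]$ is non-increasing. This gives the event identity
\[
\{\sqrt{n}(\hat\theta_s-u)/\hat\sigma\le z\}\ \text{versus}\ \Big\{\sum_{i=1}^n\varphi\Big[a_n\Big(\frac{X_i-u}{\hat\sigma}-\frac{z}{\sqrt n}\Big)\Big]\le 0\Big\},
\]
where the two events coincide up to the usual boundary ambiguity, which is handled by sandwiching with the strict inequality. So the plan is to bound $\sup_z|\p(T_z\le 0)-\Phi(z)|$, where $T_z=\frac{\hat\sigma}{\sqrt n a_n}\sum_i\varphi[a_n((X_i-u)/\hat\sigma - z/\sqrt n)]$. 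Large $|z|$, say $|z|>C_*$ with $C_*$ a fixed multiple of $\log$-type size (or simply $|z|\ge (\sqrt n a_n)^{-1}$), can be handled by monotonicity and a Chebyshev-type bound. In the regime where $\sqrt n a_n$ is not small the claimed bound is trivial, so throughout I assume $\sqrt n a_n\le c$ for a small absolute constant $c$.

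\emph{Step 1: linearize $\varphi$.} From \eqref{eq-t-04} we get $|\varphi(x)-x|\le x^2$ and $|\varphi(x)|\le|x|$ (as used in the Remark). Write $Y_i=(X_i-u)/\sigma$. On the event $\{|\hat\sigma/\sigma-1|\le 1/2\}$ I replace $\varphi(w)$ by $w$ whenever $|w|\le 1$, and I truncate at $|Y_i|\le\sqrt n$, using the truncation scheme of the Berry--Esseen proof of Theorem \ref{thm-mean-CLT}. This gives
\[
T_z=\frac{\sigma}{\hat\sigma}\cdot\frac{1}{\sqrt n}\sum_i \bar Y_i - z+R_z,\qquad \bar Y_i=Y_i\mathbf 1(|Y_i|\le \sqrt n),
\]
with $|R_z|\lesssim \frac{a_n}{\sqrt n}\sum_i (Y_i^2+z^2/n)+\frac1{\sqrt n}\sum_i|Y_i|\mathbf 1(|Y_i|>\sqrt n)$. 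The key point is that the quadratic remainder contributes, in expectation, $O(\sqrt n a_n(1+z^2/n))$. That is the source of the $C\sqrt n a_n$ term. There is no centering by $u_n$ here, because the bias of size $\sqrt n a_n$ is simply absorbed into this error.

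\emph{Step 2: reduce to the self-normalized sum.} The event $\{T_z\le 0\}$ is approximated by $\{W_n\le z\,\hat\sigma/\sigma \pm |R_z|\hat\sigma/\sigma\}$, where $W_n=n^{-1/2}\sum_i\bar Y_i$. I then use the standard identity $\hat\sigma^2/\sigma^2=\frac{n}{n-1}(V_n^2/n-\bar Y^2)$ with $V_n^2=\sum Y_i^2$, so that $W_n\sigma/\hat\sigma$ is, up to a monotone transformation, the Student/self-normalized statistic. At that point I invoke the known Berry--Esseen bound for self-normalized sums (Bentkus--Götze, or Shao's form) $\sup_z|\p(S_n/V_n\le z)-\Phi(z)|\le C(\beta_2+\beta_3)$ with an absolute constant. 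This is the reason the constant in the statement is absolute.

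\emph{Step 3: control the perturbation.} A perturbation of size $\Delta$ is handled by $\p(|R_z|>\Delta)+C\Delta$, using the Gaussian anti-concentration supplied by Step 2. For the first term I plan a Markov bound on the truncated part plus $\p(\max|Y_i|>\sqrt n)\le \beta_2$, and I choose $\Delta\asymp\sqrt n a_n(1+z^2)$. The contribution $\Phi$-tail$\cdot z^2$ stays bounded, so uniformity in $z$ follows from $z^2(1-\Phi(|z|))$ being bounded. The event $\{|\hat\sigma/\sigma-1|>1/2\}$ has probability $\lesssim\beta_2+\beta_3$, by the same truncation (the standard Chebyshev bound on $\sum\bar Y_i^2$).

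\textbf{Main obstacle.} The main difficulty is Step 3 for moderately large $z$. The remainder bound carries $z^2$, so Markov alone yields $\sqrt n a_n z^2$ and not a uniform $\sqrt n a_n$. I would first try to condition on $\{|W_n|\le 2|z|\}$ versus its complement. For the complement the probability is already $\lesssim e^{-z^2/8}+\beta_2+\beta_3$ by Step 2. On $\{|W_n|\le 2|z|\}$, the relevant quantity is $\Phi$'s density near $z$, namely $\phi(z)z^2\lesssim1$. If that fails, I would instead use the monotone coupling of $\varphi$ directly, bounding $T_z$ between $\frac{\sigma}{\hat\sigma}W_n-z\pm Ca_n\sqrt n\,(V_n^2/n+z^2/n)$.
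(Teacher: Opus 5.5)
Your skeleton matches the paper's: monotone sandwich, $|\varphi(x)-x|\le x^2$, reduction to Student's statistic $t_n=S_n/(\sqrt n\hat\sigma)$ with $S_n=\sum_i(X_i-u)$, then the Bentkus--Bloznelis--G\"otze bound $\sup_x|\p(t_n>x)-[1-\Phi(x)]|\le C(\beta_2+\beta_3)$. However, the step that is supposed to produce the $C\sqrt n a_n$ term fails as you plan it.

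The quadratic remainder has mean of order $\sqrt n a_n$. Markov's inequality cannot make $\p(|R_z|>\Delta)$ small when $\Delta$ is of the same order as that mean. With $\Delta\asymp\sqrt n a_n(1+z^2)$ you only get $\p(|R_z|>\Delta)\lesssim(1+z^2/n)/(1+z^2)$, which is of order one for bounded $z$. The best trade-off in $\sqrt n a_n/\Delta+C\Delta$ gives $(\sqrt n a_n)^{1/2}$, not $\sqrt n a_n$. What you need instead is a pathwise bound $|R_z|\le C\sqrt n a_n$ outside an event of probability $O(\beta_2+\beta_3)$.

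The obstacle you single out is also misdiagnosed. The $z$-dependent part of the remainder is $2a_nz^2/\sqrt n=\sqrt n a_n\cdot 2z^2/n$, not $\sqrt n a_n z^2$, so it is harmless once you restrict to $|z|\le\sqrt{\ln n}$. The range $|z|>\sqrt{\ln n}$ then follows from monotonicity, $1-\Phi(\sqrt{\ln n})\le n^{-1/2}$, and $\sqrt n(\beta_2+\beta_3)\ge 23/27$. Your alternative cut-off $|z|\ge(\sqrt n a_n)^{-1}$ is not safe: if $a_n\ll 1/n$, it lets $z^2/n$ become large in the range you keep.

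The paper obtains the pathwise bound without truncation and without any control of $\hat\sigma/\sigma$. Keeping $\hat\sigma$ in the denominator, the remainder is at most $\frac{2a_n}{\sqrt n}\sum_i(X_i-u)^2/\hat\sigma^2+2a_nz^2/\sqrt n$. The identity $(n-1)\hat\sigma^2=\sum_i(X_i-u)^2-S_n^2/n$ turns this into $2\sqrt n a_n+\frac{2a_n}{\sqrt n}t_n^2+2a_nz^2/\sqrt n$. This is at most $6\sqrt n a_n$ on $\{t_n^2\le n\}$ for $|z|\le\sqrt{\ln n}$, and $\p(t_n^2>n)\le C(\beta_2+\beta_3)+Cn^{-1/2}$ by the same Student-$t$ bound.

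Your fallback sandwich between $\frac{\sigma}{\hat\sigma}W_n-z\pm C\sqrt n a_n(V_n^2/n+z^2/n)$ can also be completed. To do so, you must apply to the remainder the ingredient you currently use only for $\hat\sigma/\sigma$. On $\{\max_i|Y_i|\le\sqrt n\}$, which has probability at least $1-\beta_2$, apply Chebyshev to $\sum_i\bar Y_i^2$. Its variance is at most $n\,\e\bar Y_1^4\le n^{3/2}\e|\bar Y_1|^3$, so $V_n^2/n\le 3/2$ except on an event of probability at most $4\beta_3$. Together with $\hat\sigma\ge\sigma/2$, this makes $|R_z|\le C\sqrt n a_n$ pathwise, and then the anti-concentration argument of your Step 3 goes through.
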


The following theorem provides a moderate deviation result for 
$\hat{\theta}_{s}$ under a finite $2+\delta$ moment.
\begin{theo}\label{thm-self-mean-md} Suppose that condition \ref{assu-mean-clt-a} holds  and $\e|X_{1}|^{2+\delta}<\infty$ for some $\delta\in (0,1]$. Let 
\[\gamma_{n}=\min\{n^{\delta/(4+2\delta)}, n^{-1/2}a_{n}^{-1}\}
\quad \text{and}\quad d_{k}=\sigma/(\e|X_{1}-u|^{k})^{1/k} \quad\text{for any } k\geq 2,
\]
then
\begin{align}\label{eq-thm-mean-md-01}
  &\Big| \p\big(\sqrt{n}(\hat{\theta}_{s}-u)/\hat{\sigma}\geq z\big)\big/(1-\Phi(z))-1\Big|\nonumber\\
  &\qquad\quad\qquad\leq C\big((1+z)^{2+\delta}n^{-\delta/2}+(1+z)\sqrt{n}a_{n}\big)d_{2+\delta}^{-(2+\delta)}
\end{align}
for $0\leq z\leq c_{0}\gamma_{n}d_{2+\delta}$, where $c_{0}$ and $C$ are positive constants depending on $\delta$ and $\e|X_{1}|^{2+\delta}$.
\end{theo}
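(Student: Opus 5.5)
Because $\varphi$ is non-decreasing, the map $\theta\mapsto \sum_{i}\varphi[a_n(X_i-\theta)/\hat\sigma]$ is non-increasing. Set $\theta_z=u+z\hat\sigma/\sqrt n$. Then
\[
\{\sqrt n(\hat\theta_s-u)/\hat\sigma\ge z\}
\]
is sandwiched between the events $\{\sum_i\varphi[a_n(X_i-u)/\hat\sigma - a_n z/\sqrt n]>0\}$ and $\{\ge 0\}$. Writing $Y_i=X_i-u$, I would reduce this event to a statement about a self-normalized sum. The key expansion uses $|\varphi(x)-x|\le x^2$ and $|\varphi(x)|\le |x|$ from (\ref{eq-t-04}), and gives
\[
\frac{\hat\sigma}{\sqrt n a_n}\sum_{i=1}^n\varphi\Big[\frac{a_n Y_i}{\hat\sigma}-\frac{a_n z}{\sqrt n}\Big]
=\frac{1}{\sqrt n}\sum_i Y_i - z\hat\sigma + R_n,
\]
\[
|R_n|\le \frac{C a_n}{\sqrt n\,\hat\sigma}\sum_i\Big(Y_i^2\wedge \frac{\hat\sigma |Y_i|}{a_n}\Big)+C\,a_n z^2\hat\sigma .
\]
The event then becomes $\{T_n\ge z\mp \Delta_n\}$, where $T_n=\sum Y_i/(\sqrt n\,\hat\sigma)$ is the Student statistic. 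Since $\sum Y_i^2/(n\hat\sigma^2)=1+O(T_n^2/n)$, the random perturbation satisfies $\Delta_n\lesssim \sqrt n a_n(1+z^2/n)$ on the good event. This is the source of the $(1+z)\sqrt n a_n$ term: shifting $z$ by $\sqrt n a_n$ changes $1-\Phi(z)$ by a relative factor $e^{O(z\sqrt n a_n)}-1=O((1+z)\sqrt n a_n)$ when $z\le c\,n^{-1/2}a_n^{-1}$. This is why $\gamma_n$ contains $n^{-1/2}a_n^{-1}$.

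Next I would invoke the Cramér-type moderate deviation for the self-normalized / Student $t$ statistic under a finite $(2+\delta)$-th moment, namely Jing--Shao--Wang (2003) and Shao (1999). It gives
\[
\frac{\p(T_n\ge x)}{1-\Phi(x)}=1+O\big((1+x)^{2+\delta}n^{-\delta/2}d_{2+\delta}^{-(2+\delta)}\big)
\quad\text{for } 0\le x\le c\,n^{\delta/(4+2\delta)}d_{2+\delta}.
\]
The ratio between the $t$-statistic and the self-normalized sum $\sum Y_i/\sqrt{\sum Y_i^2}$ is handled as in Shao's paper. Applying this bound at $x=z\pm C(1+z)\sqrt n a_n$ and using the Gaussian tail ratio estimate finishes the main term.

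The main obstacle is that $\Delta_n$ is random: the remainder involves $\sum_i Y_i^2\wedge(\hat\sigma|Y_i|/a_n)$ and $\hat\sigma$ itself, so it cannot simply be absorbed into a deterministic shift. I would handle it by splitting on the events $\{|\hat\sigma^2/\sigma^2-1|\le 1/2\}$ and $\{\sum_i Y_i^2\le 2n\sigma^2\}$. On these events $\Delta_n\le C\sqrt n a_n(1+z)$ deterministically. This works because $\sum Y_i^2\wedge(\cdot)\le \sum Y_i^2$, so $a_n\sum Y_i^2/(\sqrt n\hat\sigma^2)\le C\sqrt n a_n$, and dividing by $\hat\sigma$ to compare with $z$ costs only a constant. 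The complementary events must be shown to have probability $o((1-\Phi(z))\cdot\text{error})$. For the upper deviation of $\sum Y_i^2$ I would use a truncation bound of Fuk--Nagaev type. For the lower deviation of $\hat\sigma$ I would use the exponential inequality $\p(\sum Y_i^2\le n\sigma^2/2)\le e^{-cn d_{2+\delta}^{c}}$ for non-negative summands. Unlike in the sum itself, the tails of $Y_i^2$ need not be light, so the upper deviation term is the delicate one. I would avoid it by working directly with the self-normalized event, as Shao does: use $\{T_n\ge x\}\subset\{\sum Y_i\ge x\sqrt{\sum Y_i^2}\}$ up to the $n-1$ correction, and place the remainder as a multiple of $\sqrt{\sum Y_i^2}$ so that no separate control of $\sum Y_i^2$ is needed. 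With $\hat\sigma$ in the prefactor the event then becomes $\{\sum Y_i\ge (z\mp C\sqrt n a_n(1+z))\sqrt{\sum Y_i^2}\cdot(1+O(n^{-1}))\}$. The inequality $|\varphi(x)-x|\le x^2$ gives $|R_n|\le (a_n/\sqrt n)\sum Y_i^2/\hat\sigma\lesssim \sqrt n a_n\sqrt{\sum Y_i^2/n}$ exactly, so this step is deterministic.
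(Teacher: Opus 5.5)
Your outline matches the paper's proof: the monotone sandwich, linearization by $|\varphi(x)-x|\le x^2$, rewriting the remainder through the Student statistic $T_n=S_n/(\sqrt n\hat\sigma)$ via $\sum_iY_i^2/\hat\sigma^2=n-1+T_n^2$, the Jing--Shao--Wang self-normalized Cram\'er bound for $S_n/V_n$ (with $V_n^2=\sum_iY_i^2$) transferred to $T_n$, and a Gaussian-tail shift producing the $(1+z)\sqrt n a_n$ term.

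\textbf{The gap.} Your closing claim that the remainder bound is deterministic is false. Write $(a_n/\sqrt n)V_n^2/\hat\sigma=a_nV_n\cdot V_n/(\sqrt n\hat\sigma)$, and note $V_n/(\sqrt n\hat\sigma)=\sqrt{(n-1+T_n^2)/n}$. So ``$\lesssim a_nV_n$'' (equivalently, a shift of at most $6\sqrt n a_n$) holds only on a good event such as $\{T_n^2\le n\}=\{|S_n|/V_n\le n/\sqrt{2n-1}\}$. Since $\hat\sigma^2=(V_n^2-S_n^2/n)/(n-1)$ can be arbitrarily small compared with $V_n^2/n$, this event has a nontrivial complement. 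Both the upper and the lower bound for $\mathbb{P}(\sqrt n(\hat\theta_s-u)/\hat\sigma\ge z)$ then pick up an additive term of size at most $\mathbb{P}(|T_n|>\sqrt n)$. You must show this is at most $C[1-\Phi(z)](1+z)^{2+\delta}n^{-\delta/2}d_{2+\delta}^{-(2+\delta)}$ over the whole range of $z$.

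This is not automatic, and the tools you list do not give it. The lower-tail bound for $\sum_iY_i^2$ controls $V_n$ but not the subtracted $S_n^2/n$. Bounding $\hat\sigma$ from below by controlling $V_n$ and $|S_n|$ separately fails, because $\mathbb{P}(|S_n|\ge cn\sigma)$ decays only polynomially under a $(2+\delta)$-th moment. Meanwhile $1-\Phi(z)$ can be as small as $\exp(-cn^{\delta/(2+\delta)})$.

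\textbf{How the paper closes it.} It combines the lower-tail inequality \eqref{eq-pq-05} from Lemma \ref{lem-exp-for-sigma} with a self-normalized exponential inequality of the form $\mathbb{P}(S_n\ge x(4\sqrt n\sigma+V_n))\le Ce^{-cx^2}$. On $\{V_n>\sqrt n\sigma/2\}$ one has $4\sqrt n\sigma+V_n\le 9V_n$, which yields $\mathbb{P}(|T_n|>\sqrt n)\le Ce^{-cn}$ (with $c$ depending on $\delta$ and $d_{2+\delta}$). This is negligible because $z=O(n^{1/6})$.

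An alternative fix uses only the Jing--Shao--Wang bound. Note $\{T_n>\sqrt n\}\subset\{S_n/V_n>x_1\}$ with $x_1=c\,n^{\delta/(4+2\delta)}d_{2+\delta}$, the end of its range. Apply the bound there and take $c_0<c$, so that $[1-\Phi(x_1)]/[1-\Phi(z)]\le Ce^{-\kappa x_1^2}\le Cx_1^{-(2+\delta)}$.

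A minor point: the $n-1$ correction between $T_n$ and $S_n/V_n$ is the factor $b_{n,x}=(n/(n+x^2-1))^{1/2}=1+O(x^2/n)$, not $1+O(n^{-1})$. It costs $z^4/n\le(1+z)^{2+\delta}n^{-\delta/2}$, so it is harmless.
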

\subsection{Multi-dimensional Catoni type robust regression}
\begin{assu}\label{assu-low-dimension-gram-matrix}
   The empirical Gram matrix $\bdS_{n}:=\frac{1}{n}\sum_{i=1}^{n}\bdx_{i}\bdx_{i}'$ is nonsingular. Moreover, there exist two positive constants $c_{l}\leq \lambda_{\min}(\bdS_{n})\leq \lambda_{\max}(\bdS_{n})\leq c_{u}.$ 
\end{assu}
\begin{theo}\label{thm-nonasymtotic-theory}
   Suppose that Assumption  \ref{assu-low-dimension-gram-matrix} holds. Let $L_{n}:=\max _{1 \leq i \leq n}\left\|\boldsymbol{x}_i\right\|_{2}$. For any $\varepsilon \in(0, 1)$, assume 
   \begin{align}\label{eq-thm-nonasymtotic-assu}
       \Delta^{2}=1-\frac{L_{n}^{2}}{c_{l}^{2}}\Big(\alpha_{n}^{2}c_{u}\bar{\sigma}^{2}+\frac{2c_{u}\log(\varepsilon^{-1})}{n} \Big)\geq 0,
   \end{align}
where $\bar{\sigma}^{2}=\sum_{i=1}^{n}\sigma_{i}^{2}/n$. Then there exists a solution \(\hbdbe\) to \eqref{eq-p-03} such that,  with probability at least $1-\varepsilon$,
\begin{align}\label{eq-thm-nonasymtotic-thm}
  \|\boldsymbol{\hat{\beta}}-\bdbe^{*}\|_{2}\leq \frac{L_{n}}{c_{l}}\Big(\alpha_{n}\bar{\sigma}^{2}+\frac{2\log(\varepsilon^{-1})}{n\alpha_{n}}\Big)\cdot(1+\Delta)^{-1}:=\beta_{0}.
\end{align}
\end{theo}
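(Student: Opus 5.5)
Since the design points are fixed and the solution of \eqref{eq-p-03} is only asserted to exist, the natural route is Catoni's deviation argument combined with a fixed-point/degree argument (Brouwer, or the Poincaré--Miranda-type criterion: if a continuous vector field $h$ satisfies $\langle h(\bdbe),\bdbe-\bdbe^{*}\rangle<0$ on the sphere $\|\bdbe-\bdbe^{*}\|_2=\beta_0$, then $h$ has a zero inside the ball). Write $\boldsymbol{\delta}=\bdbe-\bdbe^{*}$, so $y_i-\bdx_i'\bdbe=\varepsilon_i-\bdx_i'\boldsymbol{\delta}$. The goal is to show that, with probability at least $1-\varepsilon$, for every $\boldsymbol{\delta}$ with $\|\boldsymbol{\delta}\|_2=\beta_0$ (where $\beta_0$ is the smaller root of a quadratic),
\[
\langle h(\bdbe^{*}+\boldsymbol{\delta}),\boldsymbol{\delta}\rangle=\frac{1}{n\alpha_n}\sum_{i=1}^n \bdx_i'\boldsymbol{\delta}\,\varphi\big[\alpha_n(\varepsilon_i-\bdx_i'\boldsymbol{\delta})\big]<0 .
\]
Continuity of $h$ follows from continuity of $\varphi$.

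\textbf{Step 1 (exponential moment bound).} By \eqref{eq-t-04}, $e^{\varphi(x)}\le 1+x+x^2/2$. Independence of the $\varepsilon_i$ then gives, for a fixed direction,
\[
\e\exp\Big\{\sum_i \varphi[\alpha_n(\varepsilon_i-\bdx_i'\boldsymbol{\delta})]\Big\}\le\prod_i\Big(1-\alpha_n\bdx_i'\boldsymbol{\delta}+\tfrac{\alpha_n^2}{2}(\sigma_i^2+(\bdx_i'\boldsymbol{\delta})^2)\Big)\le \exp\Big\{\sum_i\big(-\alpha_n\bdx_i'\boldsymbol{\delta}+\tfrac{\alpha_n^2}{2}(\sigma_i^2+(\bdx_i'\boldsymbol{\delta})^2)\big)\Big\},
\]
and by Markov's inequality the event $\sum_i\varphi\le \sum_i(\cdots)+\log(\varepsilon^{-1})$ has probability at least $1-\varepsilon$. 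The lower bound $\varphi\ge-\log(1-x+x^2/2)$ gives the symmetric statement.

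The difficulty is that the weights $\bdx_i'\boldsymbol{\delta}$ vary in sign and magnitude, so I would not treat each $i$ separately. Instead, I would apply the Catoni/PAC-Bayes-style trick to the sum $\sum_i w_i\varphi(\cdot)$ with $w_i=\bdx_i'\boldsymbol{\delta}/(L_n\|\boldsymbol{\delta}\|)\in[-1,1]$. Concavity in the weight gives $\exp\{w\varphi(x)\}\le 1+w x+ w^2x^2/2$-type bounds when $\varphi$ is replaced by $\sgn(w)\varphi$, using $\varphi(-x)$ bounds; this yields
\[
\sum_i w_i\varphi[\alpha_n(\varepsilon_i-\bdx_i'\boldsymbol{\delta})]\le -\alpha_n\sum_i w_i\bdx_i'\boldsymbol{\delta}+\tfrac{\alpha_n^2}{2}\sum_i(\sigma_i^2+(\bdx_i'\boldsymbol{\delta})^2)+\log\varepsilon^{-1}.
\]

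\textbf{Step 2 (uniformity over the sphere).} The bound of Step 1 holds only for a fixed $\boldsymbol{\delta}$. To avoid a net argument, which would cost a factor $p$ that is absent in $\beta_0$, I would exploit the monotonicity of $\varphi$. Along each ray $t\mapsto\bdbe^{*}+t\boldsymbol{v}$, the map $t\mapsto\langle h,\boldsymbol{v}\rangle$ is non-increasing. The stated $\beta_0$ has exactly the form of Catoni's one-dimensional bound with $n$ replaced using $c_l,c_u,L_n$. I would therefore reduce to bounding $\boldsymbol{S}_n^{-1}h(\bdbe^{*})$-type quantities via $\sum_i\bdx_i\bdx_i'=n\bdS_n$, using $c_l\|\boldsymbol{\delta}\|^2\le\frac1n\sum_i(\bdx_i'\boldsymbol{\delta})^2\le c_u\|\boldsymbol{\delta}\|^2$ and $|\bdx_i'\boldsymbol{\delta}|\le L_n\|\boldsymbol{\delta}\|$.

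\textbf{Step 3 (quadratic in $\|\boldsymbol{\delta}\|$).} Dividing by $n\alpha_n$, I obtain on the good event an inequality of the form
\[
\langle h,\boldsymbol{\delta}\rangle\le \frac{L_n\|\boldsymbol{\delta}\|}{1}\Big(-\frac{c_l}{L_n}\|\boldsymbol{\delta}\|+\frac{\alpha_n}{2}\big(\bar\sigma^2+c_u\|\boldsymbol{\delta}\|^2\big)+\frac{\log\varepsilon^{-1}}{n\alpha_n}\Big).
\]
The bracket is negative once $\|\boldsymbol{\delta}\|$ exceeds the smaller root of the quadratic $\frac{\alpha_n c_u}{2}r^2-\frac{c_l}{L_n}r+\frac{\alpha_n\bar\sigma^2}{2}+\frac{\log\varepsilon^{-1}}{n\alpha_n}$. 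Its discriminant is, up to normalization, $\Delta^2$ in \eqref{eq-thm-nonasymtotic-assu}, and the smaller root, rationalized, is $\frac{L_n}{c_l}(\alpha_n\bar\sigma^2+2\log(\varepsilon^{-1})/(n\alpha_n))(1+\Delta)^{-1}=\beta_0$. The fixed-point criterion then gives a zero $\hbdbe$ within distance $\beta_0$.

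\textbf{Main obstacle.} I expect the real difficulty to be Step 2: making the single-event Markov bound valid simultaneously for all directions on the sphere with only one $\log\varepsilon^{-1}$. I would first try to control the bound only at the two ``extreme'' configurations via monotonicity, mimicking Catoni's $\theta_\pm$ argument coordinate-wise. If that fails, I would use a PAC-Bayes averaging over Gaussian perturbations of $\boldsymbol{\delta}$, absorbing the resulting KL term into the constants.
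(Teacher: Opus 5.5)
Your Steps 1 and 3 reproduce the paper's argument. The paper also applies a generalized Poincar\'e--Miranda theorem on the ball of radius $\beta_0$ with weights $w_i=\bdx_i'(\bdbe-\bdbe^{*})/(L_n\|\bdbe-\bdbe^{*}\|_2)\in[-1,1]$. It uses the bound $w\varphi(y)\le|w|\log(1+\sgn(w)y+y^2/2)$ plus Jensen for the concave map $t\mapsto t^{|w|}$, which is the precise form of your ``concavity in the weight''. It then applies Markov's inequality and arrives at the same quadratic with smaller root $\beta_0$.

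The genuine gap is the one you isolate in Step 2, and none of your remedies closes it.
\begin{itemize}
\item Monotonicity of $t\mapsto\langle h(\bdbe^{*}+t\boldsymbol v),\boldsymbol v\rangle$ holds only along each fixed ray. It does not reduce the continuum of directions $\boldsymbol v\in S^{p-1}$ to finitely many events.
\item A coordinatewise $\theta_\pm$ scheme fails because $h_j$ is monotone in $\beta_j$ but not in the other coordinates, so the face conditions still involve uncountably many points. Even for $p=1$, Catoni's two-point argument needs a union bound and gives $1-2\varepsilon$, not $1-\varepsilon$.
\item Linearizing around $\bdS_n^{-1}h(\bdbe^{*})$ trades the problem for a uniform bound on $h(\bdbe)-h(\bdbe^{*})+\bdS_n(\bdbe-\bdbe^{*})$ over the ball, plus a deviation bound for the $p$-dimensional vector $h(\bdbe^{*})$. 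Neither produces the explicit $\beta_0$.
\item PAC-Bayes smoothing adds a KL term and a smoothing bias. Since $\beta_0$ contains no unspecified constants, there is nothing to absorb them into.
\end{itemize}

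You should also know that the paper's proof has exactly this hole. It proves the exponential-moment bound for one fixed $\bdbe$ and applies Markov to get an event of probability at least $1-\varepsilon$ that depends on $\bdbe$. It then asserts $r(\bdbe)\le0$ for all $\bdbe\in\partial\mathcal K$ as if this were a single event.

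Moreover, the gap cannot be closed for the statement as written. Take the following setup:
\begin{itemize}
\item $p=4$ and $\bdx_i=2\boldsymbol e_{j(i)}$ with $j(i)$ cycling through the coordinates, so $\bdS_n=I_4$, $c_l=c_u=1$ and $L_n=2$;
\item i.i.d.\ $N(0,\sigma^2)$ noise;
\item $\varphi(x)=\sgn(x)\log(1+|x|+x^2/2)$ and $\alpha_n=\sqrt{2\log(\varepsilon^{-1})/(n\sigma^2)}$.
\end{itemize}
Then \eqref{eq-p-03} splits into four independent one-dimensional Catoni equations, each with symmetric noise, odd $\varphi$ and $n/4$ samples. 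Each has a unique root, so the solution in the theorem is forced. Hence $n\|\hbdbe-\bdbe^{*}\|_2^2/\sigma^2\Rightarrow\chi^2_4$, while $n\beta_0^2/\sigma^2\to8\log(\varepsilon^{-1})$.

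For $\varepsilon=0.9$, the limiting probability $\p(\chi^2_4\le0.843)\approx0.067$ is below $1-\varepsilon=0.1$. Any correct uniform version must therefore pay for the supremum over directions. That could be an extra entropy term of order $p$ alongside $\log(\varepsilon^{-1})$ (via a net or PAC-Bayes) or a lower confidence level. Either way, $\beta_0$ itself has to change.
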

Theorem \ref{thm-nonasymtotic-theory} immediately implies the following corollary, which claims that the estimator $\hbdbe$ is consistent. 
\begin{coro}
  Let $\alpha_{n}=\sqrt{2\log(\varepsilon^{-1})/(n\bar{\sigma}^{2})}$, for $n\geq 4c_{u}L_{n}^{2}\log(\varepsilon^{-1})/c_{l}^{2}$, then with probability at least $1-\varepsilon$,
  \begin{align*}
    \|\boldsymbol{\hat{\beta}}-\bdbe^{*}\|_{2}\leq \frac{2L_{n}}{c_{l}}\sqrt{\frac{2\bar{\sigma}^{2}\log(\varepsilon^{-1})}{n}}\cdot(1+\Delta_{1})^{-1},
  \end{align*}
  where $\Delta_{1}^{2}=1-4c_{u}L_{n}^{2}\log(\varepsilon^{-1})/(nc_{l}^{2}).$
\end{coro}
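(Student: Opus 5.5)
The corollary is a direct specialization of Theorem \ref{thm-nonasymtotic-theory}. The plan is to substitute the prescribed tuning parameter $\alpha_{n}=\sqrt{2\log(\varepsilon^{-1})/(n\bar{\sigma}^{2})}$ into the two quantities appearing there: the quantity $\Delta^{2}$ in \eqref{eq-thm-nonasymtotic-assu}, and the bound $\beta_{0}$ in \eqref{eq-thm-nonasymtotic-thm}. First I check that the hypothesis $\Delta^{2}\geq 0$ holds, and then I simplify $\beta_{0}$. No new probabilistic argument is needed; the existence of $\hbdbe$ and the probability $1-\varepsilon$ are inherited from the theorem.

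\textbf{Step 1 (the hypothesis).} With this choice, $\alpha_{n}^{2}\bar{\sigma}^{2}=2\log(\varepsilon^{-1})/n$. Hence
\begin{align*}
\alpha_{n}^{2}c_{u}\bar{\sigma}^{2}+\frac{2c_{u}\log(\varepsilon^{-1})}{n}=\frac{4c_{u}\log(\varepsilon^{-1})}{n},
\end{align*}
and therefore $\Delta^{2}=1-4c_{u}L_{n}^{2}\log(\varepsilon^{-1})/(nc_{l}^{2})=\Delta_{1}^{2}$. The assumption $n\geq 4c_{u}L_{n}^{2}\log(\varepsilon^{-1})/c_{l}^{2}$ is exactly the statement $\Delta_{1}^{2}\geq 0$, so \eqref{eq-thm-nonasymtotic-assu} is satisfied and Theorem \ref{thm-nonasymtotic-theory} applies with $\Delta=\Delta_{1}$.

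\textbf{Step 2 (the bound).} The two terms in $\beta_{0}$ are balanced by this choice of $\alpha_{n}$, which is why it was chosen. The first term is
\begin{align*}
\alpha_{n}\bar{\sigma}^{2}=\sqrt{2\bar{\sigma}^{2}\log(\varepsilon^{-1})/n}.
\end{align*}
The second term is
\begin{align*}
\frac{2\log(\varepsilon^{-1})}{n\alpha_{n}}=\frac{2\log(\varepsilon^{-1})\sqrt{n\bar{\sigma}^{2}}}{n\sqrt{2\log(\varepsilon^{-1})}}=\sqrt{2\bar{\sigma}^{2}\log(\varepsilon^{-1})/n}.
\end{align*}
Their sum is $2\sqrt{2\bar{\sigma}^{2}\log(\varepsilon^{-1})/n}$. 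Multiplying by $L_{n}/c_{l}$ and by $(1+\Delta_{1})^{-1}$ gives exactly the claimed bound on $\|\hbdbe-\bdbe^{*}\|_{2}$, which holds with probability at least $1-\varepsilon$.

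There is no genuine obstacle here. The only point requiring care is to verify that the sample-size condition coincides with $\Delta^{2}\geq 0$ once $\alpha_{n}$ is fixed. Consistency then follows because the bound is of order $L_{n}\sqrt{\log(\varepsilon^{-1})/n}$, and $\Delta_{1}\to 1$ whenever $L_{n}^{2}=o(n)$.
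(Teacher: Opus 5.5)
Your proposal is correct and is exactly the direct substitution the paper intends when it says Theorem~\ref{thm-nonasymtotic-theory} ``immediately implies'' the corollary: $\alpha_{n}^{2}\bar{\sigma}^{2}=2\log(\varepsilon^{-1})/n$ gives $\Delta^{2}=\Delta_{1}^{2}$, so the sample-size condition is precisely $\Delta^{2}\geq 0$, and both terms of $\beta_{0}$ equal $\sqrt{2\bar{\sigma}^{2}\log(\varepsilon^{-1})/n}$. Your arithmetic checks out, and nothing beyond taking the theorem as given is needed.
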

To obtain the Berry--Esseen bounds for $\hbdbe$, we further require the following assumptions.
\begin{assu}\label{assu-BE-low-demension} There exist two positive constants $K_{0}$ and $K_{1}$ such that $|\dot{\varphi}(x)|\leq K_{0}$ for any $x\in \R$  and $|\dot{\varphi}(x)-\dot{\varphi}(y)|\leq K_{1}|x-y|$ for any $x,y\in\R$.
\end{assu} 
\begin{theo}\label{thm-low-BE-bound} Suppose that $\varepsilon_{1},\ldots, \varepsilon_{n}$ are identically distributed.  Under Assumptions \ref{assu-mean-CLT},\ref{assu-low-dimension-gram-matrix} and \ref{assu-BE-low-demension},  if $\e|\varepsilon_{1}|^{3}<\infty$, then 
\begin{align}\label{eq-thm-low-BE-bound-01}
  &\sup_{A\in \mathcal{A}}\Big|\p\Big(\frac{\sqrt{n}\bdS_{n}^{1/2}(\hbdbe-\bdbe^{*}-\boldsymbol{\delta}_{n})}{ \tilde{\sigma}}\in A \Big{|} \bdx\Big)-\p(Z\in A)\Big|\leq CpL_{n}^{3}n^{-1/2}\frac{\e|\varepsilon_{1}|^{3}}{\sigma^{3}},
\end{align} 
where $\bdx=\{\bdx_{1},\cdots, \bdx_{n}\}$, $\sigma^2=\mathbb{E}[\varepsilon_1^2]$,
\begin{align*}
 \tilde{\sigma}^{2}=\alpha_{n}^{-2}\operatorname{Var}(\varphi[\alpha_{n}\varepsilon_{1}]) ,\quad\boldsymbol{\delta}_{n}=\frac{\bdS_{n}^{-1}}{n\alpha_{n}}\sum_{i=1}^{n}\bdx_{i}\e\varphi[\alpha_{n}\varepsilon_{i}],
\end{align*}
 $\mathcal{A}$ is the collection of all convex sets in $\mathbb{R}^p$, $Z\sim N(0,I_p)$ and $C$ is a positive constant not depending on $n$ and $p$.
\end{theo}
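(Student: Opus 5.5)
We linearize the estimating equation $h(\boldsymbol{\beta})=0$ around $\boldsymbol{\beta}^*$. Conditionally on $\boldsymbol{x}$, set $\boldsymbol{b}=\hat{\boldsymbol{\beta}}-\boldsymbol{\beta}^*$, so that $y_i-\boldsymbol{x}_i'\hat{\boldsymbol{\beta}}=\varepsilon_i-\boldsymbol{x}_i'\boldsymbol{b}$. By Assumption \ref{assu-BE-low-demension}, a second-order Taylor expansion gives
\[
\varphi[\alpha_n(\varepsilon_i-\boldsymbol{x}_i'\boldsymbol{b})]=\varphi[\alpha_n\varepsilon_i]-\alpha_n\dot{\varphi}(\alpha_n\varepsilon_i)\boldsymbol{x}_i'\boldsymbol{b}+R_i,
\qquad |R_i|\le K_1\alpha_n^2(\boldsymbol{x}_i'\boldsymbol{b})^2/2 .
\]
Substituting this into $h(\hat{\boldsymbol{\beta}})=0$ yields
\[
\boldsymbol{S}_n\boldsymbol{b}=\frac{1}{n\alpha_n}\sum_i\boldsymbol{x}_i\varphi[\alpha_n\varepsilon_i]-\frac1n\sum_i\boldsymbol{x}_i\boldsymbol{x}_i'\bigl(\dot{\varphi}(\alpha_n\varepsilon_i)-1\bigr)\boldsymbol{b}+\frac{1}{n\alpha_n}\sum_i\boldsymbol{x}_iR_i .
\]
We then subtract the mean term $\boldsymbol{S}_n\boldsymbol{\delta}_n$ from both sides. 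The leading term becomes
\[
\boldsymbol{W}:=\frac{\boldsymbol{S}_n^{-1/2}}{\sqrt n\,\alpha_n\tilde\sigma}\sum_i\boldsymbol{x}_i\bigl(\varphi[\alpha_n\varepsilon_i]-\e\varphi[\alpha_n\varepsilon_i]\bigr).
\]
This is a sum of independent mean-zero vectors with identity covariance, by the definitions of $\boldsymbol{S}_n$ and $\tilde\sigma$.

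The first key step applies the multivariate Berry--Esseen bound over convex sets (Bentkus' bound, or a Stein-method version) to $\boldsymbol{W}$. The resulting error is of order $p^{1/4}\sum_i\e\|\xi_i\|^3$. Using $|\varphi(x)|\le|x|$ and $\|\boldsymbol{S}_n^{-1/2}\boldsymbol{x}_i\|\le c_l^{-1/2}L_n$, this sum is bounded by $CL_n^3n^{-1/2}\e|\varepsilon_1|^3/\tilde\sigma^3$. The ratio $\sigma/\tilde\sigma$ is then controlled via the estimate \eqref{eq-t-3.1} together with $\sqrt n a_n\le C_0$, which gives $\tilde\sigma\ge\sigma/2$ for large $n$. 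Small $n$ is absorbed into $C$.

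The second step bounds the remainders. The term $\frac1n\sum_i\boldsymbol{x}_i\boldsymbol{x}_i'(\dot\varphi(\alpha_n\varepsilon_i)-1)$ is handled using $\dot\varphi(0)=1$, which follows from \eqref{eq-t-04}, and Lipschitz continuity: $|\dot\varphi(\alpha_n\varepsilon_i)-1|\le K_1\alpha_n|\varepsilon_i|$, so this term has operator norm $O(L_n^2\alpha_n)$ in $L^1$. The quadratic term contributes $O(L_n^3\alpha_n\|\boldsymbol{b}\|^2)$. Theorem \ref{thm-nonasymtotic-theory} gives $\|\boldsymbol{b}\|\lesssim L_n\sqrt{\log(1/\varepsilon)/n}$ with probability $1-\varepsilon$. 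Since $\alpha_n\asymp n^{-1/2}$, both remainders, after multiplying by $\sqrt n/\tilde\sigma$, are of size $O(L_n^3 n^{-1/2}\log n)$ off an exceptional event of probability $n^{-1/2}$.

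To avoid the log factor I would instead use a Chen--Shao style randomized-concentration or perturbation argument. For convex $A$, one has $\p(W+\Delta\in A)\le\p(W\in A^{\epsilon})+\p(\|\Delta\|>\epsilon)$, and the Gaussian measure of $A^\epsilon\setminus A$ is at most $Cp^{1/2}\epsilon$. The $p$ factor in the statement leaves room to absorb this.

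The main obstacle is this last step: the remainder $\Delta$ depends on $\boldsymbol{b}$, hence on all the $\varepsilon_i$, and we need a tail bound of order $L_n^3n^{-1/2}$ rather than of order $n^{-1/2}\log n$. My first attempt would write $\Delta=\Delta_1+\Delta_2$ with $\Delta_1=-\sqrt n\,\boldsymbol{S}_n^{-1/2}\frac1n\sum_i\boldsymbol{x}_i\boldsymbol{x}_i'(\dot\varphi(\alpha_n\varepsilon_i)-1)\boldsymbol{b}/\tilde\sigma$. I would then bound $\e\|\Delta\|\mathbf{1}(\text{good event})$ directly by Cauchy--Schwarz, using the second-moment bound $\e\|\sqrt n\boldsymbol{b}\|^2\le C L_n^2$. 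That bound comes from the representation above, once the good event is fixed by Theorem \ref{thm-nonasymtotic-theory} with $\varepsilon=n^{-1/2}$. This gives $\e\|\Delta\|\lesssim L_n^3\alpha_n\sqrt n\cdot n^{-1/2}$, and Markov's inequality with $\epsilon=L_n^{3/2}n^{-1/4}$ would only give a rate of $n^{-1/4}$. So I expect to need the finer decomposition $\dot\varphi(\alpha_n\varepsilon_i)-1=\bigl(\dot\varphi(\alpha_n\varepsilon_i)-\e\dot\varphi\bigr)+(\e\dot\varphi-1)$. The deterministic part is $O(\alpha_n)$ and only rescales $\boldsymbol{S}_n$; this is absorbed by redefining the normalization up to $O(n^{-1/2})$ in the covariance, which is harmless. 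The random part is a centered sum multiplying $\boldsymbol{b}$, so it is of order $n^{-1/2}$ smaller. Together these yield the $n^{-1/2}$ rate.
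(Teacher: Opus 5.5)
Your final plan works, but it reaches the $n^{-1/2}$ rate by a different mechanism from the paper.

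\emph{The paper's route.} The paper never centers $\dot\varphi$. On the truncation event $\mathcal{O}=\{L_n\alpha_n|\varepsilon_i|\le C' \text{ for all } i\}$ it bounds the whole remainder $D$ by $\Delta=c_1L_nT_n(\|M_n\|_2+\sqrt{p}L_nn^{-1/2})$, which is genuinely of size $n^{-1/2}$. It then absorbs this through the randomized-concentration Berry--Esseen bound of \citet[Corollary 2.2]{shao(2022)}. That bound only requires controlling $\e\{\|W\|_2\Delta\}$ and the leave-one-out terms $\sum_i\e\{\|\bdxi_i\|_2|\Delta-\Delta_{(i)}|\}$; this is exactly the Chen--Shao option you mention and then set aside. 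The localization also differs. Instead of Theorem \ref{thm-nonasymtotic-theory}, Lemma \ref{lem-moment-of-beta-beta-star} bounds any root, deterministically on $\mathcal{O}$, by a linear statistic plus $C\sqrt{p}L_na_n\sigma$.

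\emph{Your route.} Your key observation is that the only $n^{-1/2}$-sized piece of the remainder is $c_n\bdS_n\boldsymbol{b}$. Here $c_n=\e\dot\varphi(\alpha_n\varepsilon_1)-1=O(\alpha_n)$ is a deterministic scalar (a scalar because the $\varepsilon_i$ are identically distributed), so this piece is essentially $-c_nW$. Moving it to the left side costs a deterministic rescaling by $(1+c_n)^{-1}$ and a deterministic shift of size $O(|c_n|\sqrt{n}a_n)$. Both are $O(\sqrt{p}\,n^{-1/2}\e|\varepsilon_1|/\sigma)$ in Gaussian total variation. The remaining pieces are the quadratic term and the centered part $G\boldsymbol{b}$, where $G=n^{-1}\sum_i\bdx_i\bdx_i'\{\dot\varphi(\alpha_n\varepsilon_i)-\e\dot\varphi(\alpha_n\varepsilon_1)\}$ and $\e\|G\|_F^2\lesssim L_n^2p\,\alpha_n^2\sigma^2/n$. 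After multiplying by $\sqrt{n}/\tilde\sigma$, both are of order $L_n^3/n$ in $L^1$ on the good event, by Cauchy--Schwarz and your bound $\e\|\sqrt{n}\boldsymbol{b}\|^2\le CL_n^2\sigma^2$. Plain smoothing plus Markov then costs $O(p^{1/4}L_n^{3/2}n^{-1/2})$, which is inside the claimed bound.

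\emph{What each buys.} Your route is more elementary, with no Stein machinery and no leave-one-out construction, and it explains structurally why the rate is $n^{-1/2}$. The paper's route does not need the centering trick, and its a priori bound applies to every root. Yours needs $\hbdbe$ to be the root that Theorem \ref{thm-nonasymtotic-theory} places in the ball.

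A few details need tidying.
\begin{itemize}
\item With $\alpha_n$ fixed, Theorem \ref{thm-nonasymtotic-theory} gives $\|\boldsymbol{b}\|\lesssim L_n(\alpha_n\bar\sigma^2+\log(1/\varepsilon)/(n\alpha_n))$, not $L_n\sqrt{\log(1/\varepsilon)/n}$. Also, Assumption \ref{assu-mean-CLT} only gives $\alpha_n\lesssim n^{-1/2}$, not $\alpha_n\asymp n^{-1/2}$. This is harmless: inverting your expansion only needs $\alpha_nL_n\|\boldsymbol{b}\|\ll 1$, and $\alpha_nL_n\beta_0\lesssim L_n^2(a_n^2+\log n/n)$.
\item The good event must also include $\|n^{-1}\sum_i\bdx_i\bdx_i'(\dot\varphi(\alpha_n\varepsilon_i)-1)\|_{\mathrm{op}}\le c_l/4$. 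Its failure probability is $O(p\,n^{-1/2}\e|\varepsilon_1|/\sigma)$ by Markov.
\item Replace ``$\tilde\sigma\ge\sigma/2$ for large $n$, small $n$ absorbed into $C$'' by the quantitative bound $|\tilde\sigma^2/\sigma^2-1|\le c_0n^{-1/2}\e|\varepsilon_1|^3/\sigma^3$ together with the reduction used in the paper. Otherwise $C$ depends on the law of $\varepsilon_1$.
\end{itemize}
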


\section{Proofs of main results for mean estimation}\label{section:proofs-of-mean-estimation}
In this section, we provide the proofs of  main results for mean estimation.
Throughout Sections \ref{section:proofs-of-mean-estimation} and \ref{section-proof-of-catoni-type},
$C, C_{1}, C_{2}, \ldots$ and $c_{0}, c_{1}, \ldots$ denote positive constants that may change from line to line.
\subsection{Some preliminary lemmas}
 
In this subsection, we present several preliminary lemmas. We begin by introducing some properties of $\varphi(\cdot)$, which will be used repeatedly in the proofs of the main results.
\begin{lem}\label{lem-1}
  Let $\varphi(\cdot)$ be a function that satisfies condition \ref{assu-mean-clt-a}, then
   for any $x_{1}, x_{2}\in \R$, we have
   \begin{align}\label{eq-lem1-02}
    &|\varphi(x_{1})-\varphi(x_{2})-(x_{1}-x_{2})|\nonumber\\
    &\quad\leq \log\big(1+x_{1}^{2}x_{2}^{2}/2\big)+|x_{1}-x_{2}|\Big(\frac{
    x_{1}^{2}x_{2}^{2}/2}{1+x_{1}^{2}x_{2}^{2}/2}+|x_{1}-x_{2}|\Big).
   \end{align}

\end{lem}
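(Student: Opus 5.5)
The plan is to bound $\varphi(x_1)-\varphi(x_2)$ from above and from below using only \eqref{eq-t-04}, and to reduce each side to the elementary inequality $\log(1+y)\le y$. Write $a=x_1$, $b=x_2$ and $d=a-b$.

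\emph{Upper bound.} Applying the upper inequality in \eqref{eq-t-04} at $a$ and the lower one at $b$ gives
\[
\varphi(a)-\varphi(b)\le \log\Big[(1+a+a^2/2)(1-b+b^2/2)\Big].
\]
A direct expansion, which I would verify by collecting terms, gives
\[
(1+a+a^2/2)(1-b+b^2/2)=1+d+\frac{d^2}{2}-\frac{abd}{2}+\frac{a^2b^2}{4}.
\]
The idea is to factor out $1+a^2b^2/2$, which yields the logarithmic term in \eqref{eq-lem1-02}. Thus the product equals $(1+a^2b^2/2)(1+y)$ with
\[
y=\frac{d+d^2/2-abd/2-a^2b^2/4}{1+a^2b^2/2}.
\]
Since the product is positive, $1+y>0$, so $\log(1+y)\le y$ applies. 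Hence it suffices to show
\[
y-d\le |d|\,\frac{a^2b^2/2}{1+a^2b^2/2}+d^2 .
\]

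For this I compute
\[
y-d=\frac{-d\,a^2b^2/2+d^2/2-abd/2-a^2b^2/4}{1+a^2b^2/2}.
\]
The troublesome cross term $-abd/2$ is not small relative to $a^2b^2$ when $ab$ is small. I would absorb it by completing the square:
\[
-\frac{abd}{2}-\frac{a^2b^2}{4}=-\Big(\frac{ab}{2}+\frac{d}{2}\Big)^2+\frac{d^2}{4}\le \frac{d^2}{4}.
\]
The numerator is then at most $|d|a^2b^2/2+3d^2/4$. Using $1/(1+a^2b^2/2)\le 1$ on the $d^2$ part gives the claimed bound. Therefore
\[
\varphi(a)-\varphi(b)-d\le \log(1+a^2b^2/2)+|d|\Big(\frac{a^2b^2/2}{1+a^2b^2/2}+|d|\Big).
\]

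\emph{Lower bound.} By symmetry, \eqref{eq-t-04} gives
\[
\varphi(a)-\varphi(b)\ge -\log\Big[(1-a+a^2/2)(1+b+b^2/2)\Big].
\]
This product is the previous one with $(a,b)\mapsto(-a,-b)$, which sends $d\mapsto -d$ and leaves $ab$ and $a^2b^2$ unchanged. The same argument therefore bounds the logarithm by $-d+\log(1+a^2b^2/2)+|d|(\cdots)$, which gives the matching lower bound for $\varphi(a)-\varphi(b)-d$. Combining the two sides yields \eqref{eq-lem1-02}.

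The only nontrivial step is handling the cross term $-abd/2$. The completing-the-square trick above is what I expect makes it work, and the constants leave room ($3/4\le 1$).
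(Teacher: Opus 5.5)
Your proof is correct and is essentially the paper's argument. Both routes arrive at the same intermediate bound $\varphi(x_1)-\varphi(x_2)\le \log(1+x_1^2x_2^2/2)+\frac{(x_1-x_2)+3(x_1-x_2)^2/4}{1+x_1^2x_2^2/2}$: your completed square is the paper's AM--GM step $x_1x_2(x_2-x_1)/2\le (x_1-x_2)^2/4+x_1^2x_2^2/4$, and your factor-then-$\log(1+y)\le y$ step is the paper's $\log(1+x+y)\le\log(1+x)+y/(1+x)$. The remaining differences are cosmetic: the paper applies AM--GM inside the logarithm before linearizing rather than after, and it gets the lower bound by swapping $x_1\leftrightarrow x_2$ instead of your $(a,b)\mapsto(-a,-b)$, which yields the same product.
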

\begin{proof}
  By (\ref{eq-t-04}) and the basic inequality $2ab\leq a^{2}+b^{2}$ for any $a,b\in \R$, we have for any $x_{1}, x_{2}\in \R$,
  \begin{align}\label{eq-pr-lem1-07}
    \varphi(x_{1})-\varphi(x_{2})&\leq \log(1+x_{1}+x_{1}^{2}/2)+\log(1-x_{2}+x_{2}^{2}/2)\nonumber\\
    &=\log\big(1+x_{1}-x_{2}+(x_{1}-x_{2})^{2}/2+x_{1}x_{2}(x_{2}-x_{1})/2+x_{1}^{2}x_{2}^{2}/4\big)\nonumber\\
    &\leq \log \big(1+ x_{1}-x_{2}+3(x_{1}-x_{2})^{2}/4+x_{1}^{2}x_{2}^{2}/2\big).
  \end{align}
  Noting that for any $x>-1$ and $x+y>-1$, we have $1+x+y=(1+x)\cdot[1+y/(1+x)]\leq (1+x)\cdot \exp[y/(1+x)]$, which further implies that
  \begin{align}\label{eq-pr-lem1-08}
    \log(1+x+y)\leq \log(1+x)+\frac{y}{1+x}.
  \end{align}
  Combining (\ref{eq-pr-lem1-07}) and (\ref{eq-pr-lem1-08}) yields that 
  \begin{align}\label{eq-pr-lem1-09}
    &\varphi(x_{1})-\varphi(x_{2})-(x_{1}-x_{2})\nonumber\\
    &\leq \log (1+x_{1}^{2}x_{2}^{2}/2)-(x_{1}-x_{2})\frac{x_{1}^{2}x_{2}^{2}/2}{1+x_{1}^{2}x_{2}^{2}/2}+\frac{3(x_{1}-x_{2})^{2}}{4(1+x_{1}^{2}x_{2}^{2}/2)}\nonumber\\
    &\leq \log (1+x_{1}^{2}x_{2}^{2}/2)+|x_{1}-x_{2}|\frac{x_{1}^{2}x_{2}^{2}/2}{1+x_{1}^{2}x_{2}^{2}/2}+(x_{1}-x_{2})^{2}.
  \end{align}
  By (\ref{eq-pr-lem1-09}) and the symmetry, we obtain (\ref{eq-lem1-02}).
\end{proof}
The following Lemma provides an upper bound for $|u_{n}-u|.$ 
\begin{lem}\label{lem-2}If the condition \ref{assu-mean-clt-a} is satisfied, then 
\begin{align*}
  |u_{n}-u|\leq \frac{\alpha_{n}\sigma^{2}}{\sqrt{1-\alpha_{n}^{2}\sigma^{2}}}.
\end{align*}
\end{lem}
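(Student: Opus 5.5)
We need to bound $|u_n-u|$, where $u_n$ solves $f(x)=\e\{\varphi[\alpha_n(X_1-x)]\}=0$. The plan is to evaluate $f$ at two points $u\pm\delta$, show the signs are $f(u+\delta)\le 0\le f(u-\delta)$, and then use that $f$ is non-increasing in $x$ (since $\varphi$ is non-decreasing) to trap $u_n$ in $[u-\delta,u+\delta]$. We may assume $\alpha_n\sigma<1$, since otherwise the bound is vacuous or infinite.

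\textbf{Step 1: upper bound on $f(x)$.} Using the upper bound in \eqref{eq-t-04} and Jensen's inequality (concavity of $\log$),
\[
f(x)\le \e\log\big(1+\alpha_n(X_1-x)+\alpha_n^2(X_1-x)^2/2\big)\le \log\big(1+\alpha_n(u-x)+\alpha_n^2[\sigma^2+(u-x)^2]/2\big),
\]
since $\e(X_1-x)=u-x$ and $\e(X_1-x)^2=\sigma^2+(u-x)^2$. (Note $1+y+y^2/2>0$ always, so the logarithm is well defined.) Hence $f(x)\le 0$ as soon as
\[
\alpha_n(u-x)+\alpha_n^2[\sigma^2+(u-x)^2]/2\le 0 .
\]
Setting $t=\alpha_n(x-u)$, this condition reads $t^2/2-t+\alpha_n^2\sigma^2/2\le 0$. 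That holds for $t$ between the roots $1\pm\sqrt{1-\alpha_n^2\sigma^2}$, and in particular at the smaller root
\[
t_-=1-\sqrt{1-\alpha_n^2\sigma^2}=\frac{\alpha_n^2\sigma^2}{1+\sqrt{1-\alpha_n^2\sigma^2}}.
\]
So $f(u+\delta)\le0$ with $\delta=t_-/\alpha_n\le \alpha_n\sigma^2\big/\sqrt{1-\alpha_n^2\sigma^2}$.

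\textbf{Step 2: lower bound, by symmetry.} Using the lower bound $\varphi(y)\ge -\log(1-y+y^2/2)$ and Jensen's inequality in the same way gives
\[
f(x)\ge -\log\big(1-\alpha_n(u-x)+\alpha_n^2[\sigma^2+(u-x)^2]/2\big).
\]
The same quadratic analysis then yields $f(u-\delta)\ge0$.

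\textbf{Step 3: locating $u_n$.} Since $f$ is non-increasing and continuous, with $f(u-\delta)\ge0\ge f(u+\delta)$, every zero $u_n$ satisfies $|u_n-u|\le\delta$. This last point deserves care, because $f$ may have a flat zero set. If $x>u+\delta$ were a zero, then for $x$ with $t=\alpha_n(x-u)$ strictly inside the root interval the inequality in Step 1 is strict, so $f(x)<0$. For $x$ beyond the larger root we instead use monotonicity: $f(x)\le f(u+\delta')$, where $\delta'$ is slightly larger than $\delta$ and the inequality is strict, so again $f(x)<0$. Hence no zero lies above $u+\delta$, and by the symmetric argument none lies below $u-\delta$.

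The final bound $t_-\le\alpha_n^2\sigma^2/\sqrt{1-\alpha_n^2\sigma^2}$ is elementary, since $1+\sqrt{1-\alpha_n^2\sigma^2}\ge\sqrt{1-\alpha_n^2\sigma^2}$. I expect the only delicate point to be the strictness and monotonicity bookkeeping in Step 3.
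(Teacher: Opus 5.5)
Your proof is correct and follows the paper's argument. Both use the same Jensen upper and lower bounds on $f$, the same quadratic with smaller root $\alpha_n\sigma^2/(1+\sqrt{1-\alpha_n^2\sigma^2})$, and the monotonicity of $f$. The only difference is cosmetic: the paper picks $\eta_+$ strictly between that root and $\alpha_n\sigma^2/\sqrt{1-\alpha_n^2\sigma^2}$ to get $f(u+\eta_+)<0$ directly, whereas you take the root itself and recover strictness in Step 3 (which incidentally shows the slightly sharper bound given by the smaller root).
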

\begin{proof}
 Note that $f(x)=\e \{\varphi[\alpha_{n}(X_{1}-x)]\}$ is non-increasing for $x$, to bound $u_{n}$, we only need to find $u_{+}$, $u_{-}$ such that
  \begin{align*}
    u_{-}<u_{+},\quad f(u_{+})<0\quad\text{and}\quad   f(u_{-})>0.
  \end{align*}   By Jensen's inequality and (\ref{eq-t-04}), we have 
  \begin{align}\label{eq-pp-05}
  f(x)&\leq \log\Big(1+\alpha_{n}\e(X_{1}-x)+\frac{\alpha_{n}^{2}}{2}\e(X_{1}-x)^{2}  \Big)\nonumber\\
   &\leq \log\Big(1+\alpha_{n}(u-x)+\frac{\alpha_{n}^{2}}{2}(\sigma^{2}+(u-x)^{2})  \Big):=f_{+}(x).
  \end{align}
  Similarly, we have
  \begin{align}\label{eq-pp-06}
    f(x)&\geq -\log\Big(1-\alpha_{n}(u-x)+\frac{\alpha_{n}^{2}}{2}(\sigma^{2}+(u-x)^{2})  \Big):=f_{-}(x).
  \end{align}
  By (\ref{eq-pp-05}) and (\ref{eq-pp-06}),  if we can find $u_{+}$, $u_{-}$ such that 
  \[
u_{-}<u_{+},\quad f_{+}(u_{+})< 0\quad\text{and}\quad   f_{-}(u_{-})> 0
  \]
  then $u_{-}-u\leq u_{n}-u\leq u_{+}-u.$ By the definition of $f_{+}(x)$, if $x\leq u$, then $f_{+}(x)>0$. Now, set 
  $u_{+}=u+\eta_{+}$, then
  \begin{align*}
    f_{+}(u_{+})=\log\Big(1-\alpha_{n}\eta_{+}+\frac{\alpha_{n}^{2}}{2}(\sigma^{2}+\eta_{+}^{2})  \Big).
  \end{align*} 
 In order to ensure $f_{+}(u_{+})< 0$, we only need to find  $\eta_{+}$ such that 
\begin{align}\label{eq-pp-07}
 0<1-\alpha_{n}\eta_{+}+\frac{\alpha_{n}^{2}}{2}(\sigma^{2}+\eta_{+}^{2})<1.
\end{align}
Solving the quadric equation
\begin{align*}
    -\alpha_{n}\eta_{0}+\frac{\alpha_{n}^{2}}{2}(\sigma^{2}+\eta_{0}^{2})=0
\end{align*}
shows that its smaller root $\eta_0$ is given by
 \begin{align*}
 \eta_{0}=\frac{\alpha_{n}\sigma^{2}}{1+\sqrt{1-\alpha_{n}^{2}\sigma^{2}}}.
\end{align*}
Consequently, there must exists a 
$\eta_{+}$ within the interval  $ \Big(\frac{\alpha_{n}\sigma^{2}}{1+\sqrt{1-\alpha_{n}^{2}\sigma^{2}}}, \frac{\alpha_{n}\sigma^{2}}{\sqrt{1-\alpha_{n}^{2}\sigma^{2}}}\Big)$ that satisfies (\ref{eq-pp-07}) and hence
\begin{align*}
    u_n-u\le \eta_{+} \le \frac{\alpha_{n}\sigma^{2}}{\sqrt{1-\alpha_{n}^{2}\sigma^{2}}}.
\end{align*}
 Meanwhile, the other half of the inequality follows by performing the same analysis for $f_-$, and thus the lemma is proved.
\end{proof}
The lemma stated below, as cited from \cite{shao-and-dennis2024}, provides
 an exponential lower tail bound for $U$-statistics with non-negative kernels. 
\begin{lem}[\cite{shao-and-dennis2024}]\label{lem-exp-for-sigma} For a sequence of i.i.d. random variables \newline $\{X_{i}\}_{i=1}^{n}$, let 
$U_n=\binom{n}{m}^{-1} \sum_{1 \leq i_1<\cdots<i_m \leq n} h\left(X_{i_1}, \ldots, X_{i_m}\right)$
be a $U$-statistic with degree $m$, where $h: \mathcal{X}^m \longrightarrow \mathbb{R}^{+}$ can only take non-negative values. Assume that 
$$\mathbb{E}[h^{p}]=\mathbb{E}\left[h^p\left(X_1, \ldots, X_m\right)\right]<\infty$$ 
for some $p \in(1,2]$, then for $0<x \leq \mathbb{E}[h]$,
$$
\p\left(U_n \leq x\right) \leq \exp \Big(\frac{-\lfloor n / m \rfloor(p-1)(\mathbb{E}[h]-x)^{p /(p-1)}}{p\left(\mathbb{E}\left[h^p\right]\right)^{1 /(p-1)}}\Big),
$$
where $\lfloor n / m \rfloor$ is defined as the greatest integer less than $n / m$.
\end{lem}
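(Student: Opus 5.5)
The plan is a Chernoff bound for the lower tail, combined with Hoeffding's decomposition of a $U$-statistic into averages of independent blocks. Set $k=\lfloor n/m\rfloor$. For a permutation $\pi$ of $\{1,\ldots,n\}$, define
\[
W(\pi)=\frac{1}{k}\sum_{j=1}^{k} h\big(X_{\pi((j-1)m+1)},\ldots,X_{\pi(jm)}\big).
\]
Hoeffding's representation gives $U_n=\frac{1}{n!}\sum_{\pi}W(\pi)$. For each fixed $\pi$, the $k$ summands in $W(\pi)$ are i.i.d. copies of $h(X_1,\ldots,X_m)$, because they use disjoint index blocks.

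\textbf{Step 1 (reduction to a block sum).} Fix $\lambda>0$. Convexity of $y\mapsto e^{-\lambda y}$ and Jensen's inequality over the uniform average in $\pi$ give
\[
\e e^{-\lambda U_n}\le \frac{1}{n!}\sum_\pi \e e^{-\lambda W(\pi)}=\big(\e e^{-\lambda h/k}\big)^{k}.
\]
Markov's inequality then yields, with $t=\lambda/k$,
\[
\p(U_n\le x)\le e^{\lambda x}\,\e e^{-\lambda U_n}\le \big(e^{tx}\,\e e^{-th}\big)^{k}.
\]

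\textbf{Step 2 (one-sided elementary inequality).} I will show that for all $y\ge 0$ and $p\in(1,2]$,
\[
e^{-y}\le 1-y+y^{p}/p .
\]
For $0\le y\le 1$, we have $e^{-y}\le 1-y+y^2/2$, and $y^2/2\le y^p/p$ because $y^p\ge y^2$ and $1/p\ge 1/2$. For $y\ge 1$, the right-hand side equals $1/p\ge 1/2>e^{-1}$ at $y=1$. It is nondecreasing on $[1,\infty)$ since its derivative is $y^{p-1}-1\ge 0$, while $e^{-y}$ is decreasing there.

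This is the only place where non-negativity of $h$ is used, and it is the key step: for $y<0$ no such bound with only a $p$-th moment correction can hold. Applying it with $y=th\ge 0$ and then using $1+z\le e^{z}$ gives
\[
\e e^{-th}\le 1-t\,\e h+\frac{t^{p}}{p}\e h^{p}\le \exp\Big(-t\,\e h+\frac{t^{p}}{p}\e h^{p}\Big).
\]

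\textbf{Step 3 (optimization).} Write $a=\e h-x\ge 0$ and $B=\e h^{p}$. Steps 1 and 2 combine to
\[
\p(U_n\le x)\le \exp\Big(k\big(-ta+t^{p}B/p\big)\Big)\quad\text{for every } t>0 .
\]
Choose $t=(a/B)^{1/(p-1)}$, which minimizes $-ta+t^pB/p$. Then
\[
-ta+\frac{t^{p}B}{p}=-\Big(1-\frac1p\Big)\frac{a^{p/(p-1)}}{B^{1/(p-1)}},
\]
which is exactly the stated exponent
\[
-\lfloor n/m\rfloor\,(p-1)\,(\e h-x)^{p/(p-1)}\Big/\Big(p\,(\e h^{p})^{1/(p-1)}\Big).
\]
If $a=0$ the bound is trivial, since the right-hand side equals $1$.

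I expect no real obstacle beyond verifying the elementary inequality in Step 2. The structural step, Hoeffding's permutation averaging in Step 1, is standard.
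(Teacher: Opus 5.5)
The paper does not prove this lemma: it quotes it from \cite{shao-and-dennis2024} and uses it only with $m=1$, $h(x)=(x-\e X_1)^2$ and $p=1+\delta/2$, to obtain \eqref{eq-pq-05}. There is therefore no internal argument to compare against, and your proposal supplies a correct, self-contained proof.

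Your argument has three parts, and each works:
\begin{enumerate}[(a)]
\item Hoeffding's permutation representation together with Jensen's inequality reduces $\e e^{-\lambda U_n}$ to the Laplace transform of an average of $\lfloor n/m\rfloor$ i.i.d.\ copies of $h$.
\item The inequality $e^{-y}\le 1-y+y^{p}/p$ for $y\ge 0$ gives the one-sided moment bound. Your verification of it, separately on $[0,1]$ and on $[1,\infty)$, is correct.
\item Optimizing in $t$ reproduces the stated exponent exactly. The case $a=0$ is trivial, as you note, and $\e h^{p}>0$ holds automatically because $\e h\ge x>0$.
\end{enumerate}
For $m=1$, the only case the paper needs, Step 1 is vacuous. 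Your argument then reduces to the classical lower-tail inequality for sums of independent non-negative variables.

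Two minor remarks. First, the identity $U_n=\frac{1}{n!}\sum_{\pi}W(\pi)$, as you wrote it, holds only for symmetric $h$. For $n=m=2$ the right side is $\frac12[h(X_1,X_2)+h(X_2,X_1)]$, while $U_2=h(X_1,X_2)$. Since the statement does not assume symmetry, either invoke the usual convention that kernels are symmetric, or list the indices of each block in increasing order inside $h$. With that ordering, each block summand is still distributed as $h(X_1,\ldots,X_m)$, disjoint blocks remain independent, and every $m$-subset appears as a block equally often, so the rest of your proof is unchanged.

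Second, the statement's ``greatest integer less than $n/m$'' is at most your $k$ (the usual floor). Since the exponent is non-positive, your bound implies the stated one.
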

In particular,  when 
$m=1$ and $h(x)=(x-\e X_{1})^{2}$, applying Lemma \ref{lem-exp-for-sigma} with $p=1+\delta/2$ yields 
\begin{align}\label{eq-pq-05}
    \p\Big(n^{-1}\sum_{i=1}^{n}(X_{i}-\e X_{i})^{2}\leq \sigma^{2}/4\Big)\leq \exp\Big(-nc_{\delta} d_{2+\delta}^{(4+2\delta)/\delta} \Big),
\end{align}
where $\sigma^{2}=\e(X_{1}-u)^{2}$, $c_{\delta}=\frac{\delta}{2+\delta}\Big(\frac{3}{4}\Big)^{(2+\delta)/\delta}$ and $d_{k}=\sigma/(\e|X_{1}-u|^{k})^{1/k}$.\\[2pt]
\subsection{Proofs Theorems \ref{thm-mean-CLT} and \ref{thm-mean-md}}
In this subsection, we will sequentially prove Theorems \ref{thm-mean-CLT} and \ref{thm-mean-md}.
 \begin{proof}[Proof of Theorem \ref{thm-mean-CLT}] 
We  divide the analysis into two parts: $|z|\leq \sqrt{\ln n}$ and $|z|\geq \sqrt{\ln n}.$ For $|z|\leq \sqrt{\ln n}$,
  since $\varphi(\cdot)$ is non-decreasing and $\hat{\theta}$ is the solution of (\ref{eq-t-02}), for any  $t\in \R$, 
  \begin{align}\label{eq-ww-01}
  \p\Big(\sum_{i=1}^{n}\varphi\big(\alpha_{n}(X_{i}-t)\big)< 0\Big) \leq  \p\big(\hat{\theta}\leq t \big)\leq \p\Big(\sum_{i=1}^{n}\varphi\big(\alpha_{n}(X_{i}-t)\big)\leq 0\Big).
  \end{align}
 For any  $z\in \R$, define $\delta_{n}=n^{-1/2}z\sigma+u_{n}-u.$ It then follows from (\ref{eq-ww-01}) that 
  \begin{align}\label{eq-ww-02}
    \big|\p\big(\sqrt{n}(\hat{\theta}-u_{n})/\sigma\leq z\big)-\Phi(z)\big|\leq \max\big\{T_{1}(z),T_{2}(z)\big\},
  \end{align}
  where 
  \begin{equation}\label{eq-ww-2.1}
    \begin{aligned}
    T_{1}(z)&=\Big|\p\Big(\sum_{i=1}^{n}\varphi\big(\alpha_{n}(X_{i}-u-\delta_{n})\big)\leq 0\Big)-\Phi(z)\Big|,\\
     T_{2}(z)&=\Big|\p\Big(\sum_{i=1}^{n}\varphi\big(\alpha_{n}(X_{i}-u-\delta_{n})\big)< 0\Big)-\Phi(z)\Big|.
\end{aligned}
  \end{equation}
For any $1\leq i\leq n$, let
  \begin{align}\label{eq-ww-2.2}
\bar{X}_{i}=(X_{i}-u)\mathbf{1}(|X_{i}-u|\leq \sqrt{n}\sigma)\quad \text{and}\quad Y_{ni}(z)=\varphi\big(\alpha_{n}(\bar{X}_{i}-\delta_{n})\big).
\end{align}
With the above notation and by applying Markov's inequality, we have
  \begin{align}\label{eq-ww-03}
    T_{1}(z)&\leq \big|\p\big(\sum_{i=1}^{n}Y_{ni}(z)\leq 0\big)-\Phi(z)\big|+\sum_{i=1}^{n}\p(|X_{i}-u|\geq \sqrt{n}\sigma)\nonumber\\
    &\leq \big|\p(W_{n}(z)\leq z_{n})-\Phi(z)\big|+\beta_{2},
  \end{align}
  where 
  \begin{align*}
  \sigma_{z}^{2}&=\e[Y_{ni}(z)-\e Y_{ni}(z)]^{2},\quad
   z_{n}=-\sqrt{n}\e Y_{n1}(z)/\sigma_{z}, \\
   W_{n}(z)&=\frac{1}{\sqrt{n}}\sum_{i=1}^{n}[Y_{ni}(z)-\e Y_{ni}(z)]/\sigma_{z}. 
  \end{align*}
  With a similar argument as that leading to (\ref{eq-ww-03}), we have 
    \begin{align}\label{eq-ww-04}
    T_{2}(z)\leq \big|\p(W_{n}(z)< z_{n})-\Phi(z)\big|+\beta_{2}.
  \end{align}
  Note that $\p(W_{n}(z)\leq  x-\beta_{2})\leq \p(W_{n}(z)< x)\leq \p(W_{n}(z)\leq  x)$  and $\Phi(x)-\Phi(y)\le |x-y|$ for any $x,y\in \R$. Hence
  \begin{align}\label{eq-ww-05}
    |\p(W_{n}(z)< z_{n})-\Phi(z_{n})|
    &\le \sup_{x} |\p(W_{n}(z)\leq x )-\Phi(x)|+ \sup_{x} \left|\Phi(x)-\Phi(x-\beta_{2})\right|\nonumber\\
    &\leq \sup_{x} |\p(W_{n}(z)\leq x )-\Phi(x)|+\beta_{2}.
  \end{align}
  Combining (\ref{eq-ww-02})--(\ref{eq-ww-05}) yields that 
  \begin{align}\label{eq-ww-06}
   &\sup_{|z|\leq \sqrt{\ln n}}\big|\p\big(\sqrt{n}(\hat{\theta}-u_{n})/\sigma\leq z\big)-\Phi(z)\big|\nonumber\\
   &\quad\leq \sup_{|z|\leq \sqrt{\ln n}}\sup_{x} |\p(W_{n}(z)\leq x )-\Phi(x)|+\sup_{|z|\leq \sqrt{\ln n}}|\Phi(z_{n})-\Phi(z)|+C\beta_{2}
  \end{align}
  We next state two claims, which will be proved below.
  \begin{itemize}
      \item [(A)] For all sufficiently large $n$, 
    there exists $C_{2}>0$ such that
\begin{align}\label{eq-ww-34.1}
  |z_{n}-z|&\leq C_{2}(1+|z|)(\beta_{2}+\beta_{3})+C_{2}(1+z^{2})n^{-1/2}.
\end{align}
    \item [(B)] For all sufficiently large $n$, there exists $C_{3}>0$ such that
     \begin{align}\label{eq-ww-22.1}
     \sup_{|z|\leq \sqrt{\ln n}}\sup_{x}\big|\p(W_{n}(z)\leq x )-\Phi(x)\big|\leq C_{3}\beta_{2}+C_{3}\beta_{3}.
    \end{align}
  \end{itemize}
  For the second term of (\ref{eq-ww-06}), by mean value theorem, 
  \begin{align}\label{eq-ww-25}
    |\Phi(z_{n})-\Phi(z)|\leq |z_{n}-z|e^{-z^{2}/2}\cdot e^{|z|\cdot |z_{n}-z|}.
  \end{align}
  Since $\big|\p\big(\sqrt{n}(\hat{\theta}-u_{n})/\sigma\leq z\big)-\Phi(z)\big|\leq 1$, we may assume that $C_{2}(\beta_{2}+\beta_{3})\leq 1/4$, otherwise (\ref{eq-thm-mean-CLT}) holds with $C=4C_{2}$. By  (\ref{eq-ww-34.1}) and (\ref{eq-ww-25}), we have 
\begin{align}\label{eq-ww-36}
\begin{aligned}
      \sup_{|z|\leq \sqrt{\ln n}}|\Phi(z_{n})-\Phi(z)|&\leq  C(\beta_{2}+\beta_{3})\cdot (1+z^{2})\exp(-z^{2}/4+|z|/4)\\
  &\leq C(\beta_{2}+\beta_{3}).
\end{aligned}

\end{align}
Combining (\ref{eq-ww-06}), (\ref{eq-ww-22.1}) and (\ref{eq-ww-36}) yields
\begin{align}\label{eq-ww-37}
  \sup_{|z|\leq \sqrt{\ln n}}\big|\p\big(\sqrt{n}(\hat{\theta}-u_{n})/\sigma\leq z\big)-\Phi(z)\big|\leq C(\beta_{2}+\beta_{3}).
\end{align}
For $z\leq -\sqrt{\ln n}$, it follows from (\ref{eq-ww-37}) that 
  \begin{align}\label{eq-ww-23}
    &\big|\p\big(\sqrt{n}(\hat{\theta}-u_{n})/\sigma\leq z\big)-\Phi(z)\big|\nonumber\\
    &\quad\leq \p\big(\sqrt{n}(\hat{\theta}-u_{n})/\sigma\leq -\sqrt{\ln n}\big)+\Phi(-\sqrt{\ln n})\nonumber\\
    &\quad\leq \big|\p\big(\sqrt{n}(\hat{\theta}-u_{n})/\sigma\leq -\sqrt{\ln n}\big)-\Phi(-\sqrt{\ln n})\big|+2\Phi(-\sqrt{\ln n})\nonumber\\
    &\quad\leq C\beta_{2}+C\beta_{3}+Cn^{-1/2},
  \end{align}
  where the last inequality uses the fact that $\Phi(-\sqrt{\ln n})\leq \exp(-0.5\ln n)\leq n^{-1/2}.$\\
  For $z\geq \sqrt{\ln n}$, by (\ref{eq-ww-37}) again, we have
   \begin{align}\label{eq-ww-24}
    &\big|\p\big(\sqrt{n}(\hat{\theta}-u_{n})/\sigma\leq z\big)-\Phi(z)\big|\nonumber\\
    &\quad=\big|\p\big(\sqrt{n}(\hat{\theta}-u_{n})/\sigma> z\big)-[1-\Phi(z)]\big|\nonumber\\
    &\quad\leq \p\big(\sqrt{n}(\hat{\theta}-u_{n})/\sigma>\sqrt{\ln n}\big)+1-\Phi(\sqrt{\ln n})\nonumber\\
     &\quad\leq \big|\p\big(\sqrt{n}(\hat{\theta}-u_{n})/\sigma>\sqrt{\ln n}\big)-[1-\Phi(\sqrt{\ln n})]\big|+2[1-\Phi(\sqrt{\ln n})]\nonumber\\
    &\quad\leq C\beta_{2}+C\beta_{3}+Cn^{-1/2}.
  \end{align} 
Moreover, by the definitions of $\beta_{2}$ and $\beta_{3}$ and H\"{o}lder inequality, we have $\sqrt{n}\beta_{3}\geq (1-\beta_{2})^{3/2}$, which implies
  \begin{align}\label{eq-ww-24.1}
     & \sqrt{n}(\beta_{2}+\beta_{3})\geq \beta_{2}+(1-\beta_{2})^{3/2}\geq \min_{0\leq s\leq 1}\{s+(1-s)^{3/2}\}\geq 23/27.
  \end{align}
  Now, (\ref{eq-thm-mean-CLT}) follows form (\ref{eq-ww-37}), (\ref{eq-ww-23}),  (\ref{eq-ww-24}) and \eqref{eq-ww-24.1}. What remains is to prove (\ref{eq-ww-34.1}) and (\ref{eq-ww-22.1}). \par\medskip

\noindent
{\it Proof of (\ref{eq-ww-34.1}).}
   For the sake of convenience in writing, define 
  \begin{align*}
  L_{n}(x)=\e \big\{\varphi\big(\alpha_{n}(\bar{X}_{1}-x)\big)\big\},\quad M_{n}(x)=\e \big\{\varphi^{2}\big(\alpha_{n}(\bar{X}_{1}-x)\big)\big\}.
  \end{align*}
 Recall that $z_{n}=-\sqrt{n}\e  \big\{\varphi[\alpha_{n}(\bar{X}_{1}-\delta_{n})]\big\}/\sigma_{z}$ and $\delta_{n}=n^{-1/2}z\sigma+u_{n}-u$. Then
  \begin{align}\label{eq-ww-27}
    \begin{split}
        |z-z_{n}|&=|z+\sqrt{n}\e  \big\{\varphi[\alpha_{n}(\bar{X}_{1}-\delta_{n})]\big\}/\sigma_{z}|
    \\&=\Big|z-\frac{\sqrt{n}\alpha_{n}}{\sigma_{z}}[ \delta_{n} -(u_{n}-u)]+\frac{\sqrt{n}}{\sigma_{z}}\big[L_{n}(\delta_{n})+\alpha_{n}[ \delta_{n} -(u_{n}-u)]\big]\Big|\\
    &\leq z\cdot\Big|\frac{\alpha_{n} \sigma}{\sigma_{z}}-1\Big|+\frac{\sqrt{n}}{\sigma_{z}}\Big| L_{n}( \delta_{n} )-L_{n}(u_{n}-u)+\alpha_{n}[ \delta_{n} -(u_{n}-u)]\Big|\\
    &\quad +\frac{\sqrt{n}|L_{n}(u_{n}-u)|}{\sigma_{z}}\\
    &:=z\cdot Q_{n,1} +Q_{n,2}+Q_{n,3}.
    \end{split}
  \end{align}
In what follows, we shall  provide  the upper bounds of  $Q_{n,1}, Q_{n,2},Q_{n,3}$ respectively.\par
\medskip
\noindent
{\it (i) Upper bound of $Q_{n,1}.$}
  For $Q_{n,1}$, note that 
  \begin{align*}
  Q_{n,1}\leq \Big|\frac{\alpha_{n} \sigma}{\sigma_{z}}-1\Big|\cdot \Big|\frac{\alpha_{n} \sigma}{\sigma_{z}}+1\Big|=\frac{\alpha_{n}^{2}}{\sigma_{z}^{2}}\cdot \big|\sigma^{2}-\sigma_{z}^{2}/\alpha_{n}^{2}\big|.
  \end{align*}
  So to bound $Q_{n,1}$, it suffices to provide an upper bound for $\big|\sigma^{2}-\sigma_{z}^{2}/\alpha_{n}^{2}\big|.$ By the triangle inequality, we have
  \begin{align}\label{eq-ww-27.1}
   \big|\sigma^{2}-\sigma_{z}^{2}/\alpha_{n}^{2}\big|\leq \frac{|\alpha_{n}^{2}\sigma^{2}-M_{n}(0)|}{\alpha_{n}^{2}}+
  \frac{|M_{n}(0)-\sigma_{z}^{2}|}{\alpha_{n}^{2}}.  
  \end{align}
  For the first term of (\ref{eq-ww-27.1}), by definition of $\bar{X}_1$ and  inequalities $|\varphi(x)|\leq |x|$ and $|\varphi(x)-x|\leq x^{2}$, we have
  \begin{align}\label{eq-ww-28}
    \alpha_{n}^{-2}|\alpha_{n}^{2}\sigma^{2}-M_{n}(0)|&\leq \alpha_{n}^{-2}\e \big|\varphi^{2}\big[\alpha_{n} \bar{X}_{1}\big]-\alpha_{n}^{2}(X_{1}-u)^{2}\big|\nonumber\\
    &\leq \alpha_{n}^{-2}\e \big|\varphi^{2}\big[\alpha_{n} \bar{X}_{1}\big]-\alpha_{n}^{2}\bar{X}_{1}^{2}\big|+\e(X_{1}-u)^{2}\mathbf{1}(|X-u|\geq \sqrt{n}\sigma)\nonumber\\
    &\leq 2\alpha_{n}^{-1}\e\big\{|\bar{X}_{1}|\cdot \big|\varphi[\alpha_{n}\bar{X}_{1}]-\alpha_{n}\bar{X}_{1}\big|\big\}+\sigma^{2}\beta_{2}\nonumber\\
    &\leq 2\alpha_{n}\e|\bar{X}_{1}|^{3}+\sigma^{2}\beta_{2}.
  \end{align} 
  For the second term of (\ref{eq-ww-27.1}),  by Lemma \ref{lem-1}, 
  \begin{align}\label{eq-ww-13}
   |\varphi(x_{1})-\varphi(x_{2})|\leq \log \big(1+x_{1}^{2}x_{2}^{2}/2\big)+2|x_{1}-x_{2}|+(x_{1}-x_{2})^{2}.
  \end{align} Recall that
  \begin{align*}
      \sigma_{z}^{2}=\operatorname{Var}\left(Y_{n1}(z)\right)=\e \big\{\varphi^{2}[\alpha_{n}(\bar{X}_{1}-\delta_{n})]\big\}-\left(\e\{ \varphi[\alpha_{n}(\bar{X}_{1}-\delta_{n})]\}\right)^{2}.
  \end{align*}
So, by applying (\ref{eq-ww-13}) with \( x_1 = \alpha_{n} \bar{X}_1 \) and \( x_2 = \alpha_{n}(\bar{X}_1 - \delta_n) \), and using the inequlity \( |\varphi(x)| \leq |x| \), we have
  \begin{align}\label{eq-pr-23}
 \frac{|M_{n}(0)-\sigma_{z}^{2}|}{\alpha_{n}^{2}}
    &\leq \alpha_{n}^{-2}\Big|\e \big\{\varphi^{2}[\alpha_{n}\bar{X}_{1}]\big\}-\e \big\{\varphi^{2}[\alpha_{n}(\bar{X}_{1}-\delta_{n})]\big\}\Big|+\big[\alpha_{n}^{-1}L_{n}(\delta_{n})\big]^{2}\nonumber\\
    &\leq \alpha_{n}^{-1}\e\Big\{\big|\varphi[\alpha_{n}\bar{X}_{1}]- \varphi[\alpha_{n}(\bar{X}_{1}-\delta_{n})]\big|\cdot (|\bar{X}_{1}|+|\bar{X}_{1}-\delta_{n}|)\Big\} \nonumber \\
    &\quad +\big[\alpha_{n}^{-1}L_{n}(\delta_{n})\big]^{2}\nonumber\\
    &\leq H_{1}+H_{2}+H_{3},
  \end{align}
  where 
  \begin{align*}
    H_{1}&=\alpha_{n}^{-1}\e\Big\{\log\big(1+\alpha_{n}^{4}\bar{X}_{1}^{2}(\bar{X}_{1}-\delta_{n})^{2}/2\big)\cdot (|\bar{X}_{1}|+|\bar{X}_{1}-\delta_{n}|)\Big\},\\
    H_{2}&=\alpha_{n}^{-1}\e\Big\{\big(2\alpha_{n}|\delta_{n}|+\alpha_{n}^{2}|\delta_{n}|^{2}\big)\cdot (|\bar{X}_{1}|+|\bar{X}_{1}-\delta_{n}|)\Big\},\nonumber\\
    H_{3}&=\big[\alpha_{n}^{-1}L_{n}(\delta_{n})\big]^{2}.
  \end{align*}
   Since $\limsup_{n\to \infty} \sqrt{n}a_{n}\leq C_{0}$, there exists a constant $C'_{0}$ such that
$a_{n}\leq C'_{0}n^{-1/2}$ for all sufficiently large $n$. In addition, by Lemma \ref{lem-2}, we have 
for large $n$,
\begin{align}\label{eq-ww-15}
  |\delta_{n}|\leq n^{-1/2}z\sigma+|u_{n}-u|\leq n^{-1/2}z\sigma+Ca_{n}\sigma\leq C(1+|z|)n^{-1/2}\sigma,
\end{align}
and hence 
\begin{align}\label{eq-ww-16}
  |\delta_{n}|\leq  C\sigma\quad \text{for all sufficiently large }n \text{ and } |z|\leq \sqrt{\ln n}.
\end{align}
  For $H_{1}$, by the inequality $\log(1+x^{2}/2)\leq |x|,\ \forall x\in\R$ and (\ref{eq-ww-16}), we have 
  \begin{align}\label{eq-n1-01}
    H_{1}&\leq \alpha_{n}\e\big\{|\bar{X}_{1}|\cdot|\bar{X}_{1}-\delta_{n}|\cdot (|\bar{X}_{1}|+|\bar{X}_{1}-\delta_{n}|)\big\}\nonumber\\
    &\leq Cn^{-1/2}\sigma^{-1} \e\big\{|\bar{X}_{1}|\cdot(|\bar{X}_{1}|+\sigma)\cdot (2|\bar{X}_{1}|+\sigma)\big\}\nonumber\\
    &\leq Cn^{-1/2}\big(\sigma\e|\bar{X}_{1}|+\e|\bar{X}_{1}|^{2}+\sigma^{-1}\e|\bar{X}_{1}|^{3}\big)\nonumber\\
    &\leq Cn^{-1/2}\sigma^{2}+C\sigma^{2}\beta_{3}.
  \end{align}
  For $H_{2}$, it follows from  (\ref{eq-ww-15}) and (\ref{eq-ww-16}) again that
 \begin{align}\label{eq-n1-02}
   H_{2}&\leq C\e\big\{\big(|\delta_{n}|+\alpha_{n}\sigma^{2}\big)\cdot (2|\bar{X}_{1}|+\sigma)\big\}\leq Cn^{-1/2}(1+|z|)\sigma^{2}.
  \end{align}
  For $H_{3}$, by the definition of $\bar{X}_{1}$ in \eqref{eq-ww-2.2}, it is clear that $\e[\bar{X}_{1}^{2}]\le \sigma^{2}$ and
  \begin{align*}
      |\e[\bar{X}_1]|=|-\e(X_1-u)\mathbf{1}(|X_1-u|\geq \sqrt{n}\sigma)|\le \frac{1}{\sqrt{n}\sigma} \e[|X_1-u|^{2}] =\frac{\sigma}{\sqrt{n}}.
  \end{align*}
  So it follows from (\ref{eq-ww-15}) and (\ref{eq-ww-16}) that 
  \begin{align}\label{eq-ww-29.1}
      \alpha_{n}^{-1}|\e\varphi\big[\alpha_{n}(\bar{X}_{1}-\delta_{n})\big]|&\leq \alpha_{n}^{-1}\e\big|\varphi\big[\alpha_{n}(\bar{X}_{1}-\delta_{n})\big]-\big[\alpha_{n}(\bar{X}_{1}-\delta_{n})\big]\big|+|\e\bar{X}_{1}-\delta_{n}|\nonumber\\
      &\leq \alpha_{n}\e(\bar{X}_{1}-\delta_{n})^{2}+|\e\bar{X}_{1}-\delta_{n}|\nonumber\\
      &\leq 2\alpha_{n}\e\bar{X}_{1}^{2}+(1+2\alpha_{n}\delta_{n})\delta_{n}+|\e\bar{X}_{1}|\nonumber\\
      &\leq Cn^{-1/2}(1+|z|)\sigma
  \end{align}
 Noting that $|z| \leq \sqrt{\ln n}$, so $n^{-1/2} |z| \leq  \sqrt{\ln n /n} \leq 1$. Combining this with (\ref{eq-n1-01}), (\ref{eq-n1-02}), (\ref{eq-ww-29.1}) yields that 
 \begin{align}\label{eq-ww-29.2}
 \alpha_{n}^{-2}|M_{n}(0)-\sigma_{z}^{2}|\leq Cn^{-1/2}(1+|z|)\sigma^{2}+C\sigma^{2}(\beta_{2}+\beta_{3}).
 \end{align}
 Substituting 
(\ref{eq-ww-28}) and 
(\ref{eq-ww-29.2}) into (\ref{eq-ww-27.1}), we obtain
  \begin{align}\label{eq-ww-29.3}
\big|\sigma^{2}-\sigma_{z}^{2}/\alpha_{n}^{2}\big|&\leq Cn^{-1/2}(1+|z|)\sigma^{2}+C\sigma^{2}(\beta_{2}+\beta_{3}).
\end{align}
By (\ref{eq-nn-0}) and (\ref{eq-ww-29.3}), for $|z|\leq \sqrt{\ln n}$, we  have $\big|\sigma^{2}-\sigma_{z}^{2}/\alpha_{n}^{2}\big|\to 0$ as $n\to \infty$. This implies that 
 \begin{align}\label{eq-ww-21}
 \sigma_{z}^{2}/\alpha_{n}^{2}\geq \sigma^{2}/2\quad \text{for all sufficiently large } n.
 \end{align}
 Combine (\ref{eq-ww-29.3}) and (\ref{eq-ww-21}) yields
  \begin{align}\label{eq-ww-30}
   Q_{n,1}&\leq C\beta_{2}+C\beta_{3}+Cn^{-1/2}(1+|z|).
  \end{align}

\noindent
{\it (ii) Upper bound of $Q_{n,2}.$} By (\ref{eq-lem1-02}) and the inequalities $\log(1+x^{4}/2)\leq 2|x|^{3}$ and $|x|\leq \frac{\sqrt{2}}{2}(1+\frac{x^{2}}{2})$, we have   
\begin{align}\label{eq-ww-31}
  |\varphi(x_{1})-\varphi(x_{2})-(x_{1}-x_{2})|
    &\leq \log\big(1+x_{1}^{2}x_{2}^{2}/2\big)+|x_{1}-x_{2}|\Big(\frac{
    x_{1}^{2}x_{2}^{2}/2}{1+x_{1}^{2}x_{2}^{2}/2}+|x_{1}-x_{2}|\Big)\nonumber\\
    &\leq 2|x_{1}x_{2}|^{3/2}+0.4|x_{1}x_{2}|\cdot|x_{1}-x_{2}|+|x_{1}-x_{2}|^{2}.
\end{align}
By \eqref{eq-ww-21} and applying (\ref{eq-ww-31}) with $x_{1}=\alpha_{n}(\bar{X}_{1}-\delta_{n})$ and $x_{2}=\alpha_{n}[\bar{X}_{1}-(u_{n}-u)]$, we obtain
\begin{align}\label{eq-ww-32}
 Q_{n,2}&\leq \frac{C\sqrt{n}}{a_{n}}\Big| \varphi[\alpha_{n}(\bar{X}_{1}-\delta_{n})]-\varphi[\alpha_{n}(\bar{X}_{1}-(u_{n}-u))]+\alpha_{n}[\delta_{n}-(u_{n}-u)]\Big|\nonumber\\
 &\leq C\frac{\sqrt{n}}{a_{n}}\cdot \Big( \alpha_{n}^{3}|(\bar{X}_{1}-\delta_{n})\cdot (\bar{X}_{1}-(u_{n}-u))|^{3/2}
 +n^{-1/2}a_{n}^{2}z^{2}\nonumber\\
 &\quad\qquad\qquad\quad+\alpha_{n}^{2}|(\bar{X}_{1}-\delta_{n})\cdot (\bar{X}_{1}-(u_{n}-u))|\cdot n^{-1/2}a_{n}z\Big)\nonumber\\
 &\leq C\sqrt{n}a_{n}^{2}\sigma^{-3}\e|(\bar{X}_{1}-\delta_{n})\cdot (\bar{X}_{1}-(u_{n}-u))|^{3/2}+Ca_{n}z^{2}\nonumber\\
 &\quad +Cz\alpha_{n}^{2}|(\bar{X}_{1}-\delta_{n})\cdot (\bar{X}_{1}-(u_{n}-u))|\nonumber\\
 &\leq Cn^{-1/2}\sigma^{-3}\e( \sigma^{2}+\sigma|\bar{X}_{1}|+|\bar{X}_{1}|^{2})^{3/2}+Cn^{-1/2}z^{2}\nonumber\\
 &\quad +Cn^{-1}z\sigma^{-2}\e(\sigma^{2}+\sigma|\bar{X}_{1}|+|\bar{X}_{1}|^{2})\nonumber\\
 &\leq Cn^{-1/2}(1+z^{2})+Cn^{-1/2}\sigma^{-3}\e|\bar{X}_{1}|^{3}\quad \text{for large }n,
\end{align}
where  the second-to-last inequality follows from the fact $|u_{n}-u|\leq C\sigma$ and $|\delta_{n}|\leq C\sigma$ for large $n$.\par
\medskip
\noindent
{\it (iii) Upper bound of $Q_{n,3}.$} Noting that $\e \big\{\varphi\big(\alpha_{n}(X_{1}-u_{n})\big)\big\}=0$ by the definition of $u_n$, then 
\begin{align*}
 &|L_{n}(u_{n}-u)|\nonumber\\
 &=\e \big\{\varphi\big[\alpha_{n}(\bar{X}_{1}-u_{n}+u)\big]\big\}\nonumber\\
 &=\e \big\{\varphi\big[\alpha_{n}(\bar{X}_{1}-u_{n}+u)\big]\mathbf{1}(|X_{1}-u|> \sqrt{n}\sigma)\big\}\nonumber\\
 &\quad +\e \big\{\varphi\big[\alpha_{n}(\bar{X}_{1}-u_{n}+u)\big]\mathbf{1}(|X_{1}-u|\leq  \sqrt{n}\sigma)\big\}\nonumber\\
 &=\e \big\{\varphi\big[\alpha_{n}(0-u_{n}+u)\big]\mathbf{1}(|X_{1}-u|> \sqrt{n}\sigma)\big\}\nonumber\\
 &\quad +\e \big\{\varphi\big[\alpha_{n}(X_{1}-u_{n})\big]\mathbf{1}(|X_{1}-u|\leq  \sqrt{n}\sigma)\big\}\nonumber\\
 &=\e \big\{\varphi\big[\alpha_{n}(u-u_{n})\big]\mathbf{1}(|X_{1}-u|> \sqrt{n}\sigma)\big\}-\e \big\{\varphi\big[\alpha_{n}(X_{1}-u_{n})\big]\mathbf{1}(|X_{1}-u|>  \sqrt{n}\sigma)\big\}.
\end{align*}
By Markov's inequality and the fact that $|\varphi(x)|\leq x$, $|u_{n}-u|\leq C\sigma$, we have 
\begin{align*}
  &\e \big\{\varphi\big[\alpha_{n}(u-u_{n})\big]\mathbf{1}(|X_{1}-u|> \sqrt{n}\sigma)\big\}\leq Ca_{n}\p(|X_{1}-u|> \sqrt{n}\sigma)\leq Ca_{n}n^{-1}\beta_{2}.
\end{align*}
Similarly, we have 
\begin{align*}
  \e \big\{\varphi\big[\alpha_{n}(X_{1}-u_{n})\big]\mathbf{1}(|X_{1}-u|>  \sqrt{n}\sigma)\big\}\leq Ca_{n}n^{-1}\beta_{2}+Ca_{n}n^{-1/2}\beta_{2}.
\end{align*}
and hence 
\begin{align*}
    |L_{n}(u_{n}-u)|\le Ca_nn^{-1/2}\beta_2.
\end{align*}
Combining this with \eqref{eq-ww-21} gives us
\begin{align}\label{eq-ww-33}
  Q_{n,3}&\leq  C\beta_{2}.
\end{align}
Combining (\ref{eq-ww-27}), (\ref{eq-ww-30}), (\ref{eq-ww-32}) and (\ref{eq-ww-33}) yields that for all sufficiently large $n$, there exists $C_{2}>0$ such that
\begin{align}\label{eq-ww-34}
  |z_{n}-z|&\leq C_{2}(1+|z|)(\beta_{2}+\beta_{3})+C_{2}(1+z^{2})n^{-1/2},
\end{align}
so the proof of (\ref{eq-ww-34.1}) is complete.\par
\medskip
  \noindent
  {\it Proof of (\ref{eq-ww-22.1}).}
     Applying  \citet[Theorem 3.6]{chen2010normal} with $\xi_{i}= (Y_{ni}-\e Y_{ni})/(\sqrt{n}\sigma_{z})$ yields that
  \begin{align}\label{eq-ww-07}
   \sup_{x\in\R}\big|\p(W_{n}(z)\leq x  )-\Phi(x)\big|&\leq C\sum_{i=1}^{n}\e|\xi_{i}|^{3}\leq C n^{-1/2}\sigma_{z}^{-3}\cdot\e|Y_{n1}(z)-\e Y_{n1}(z)|^{3}\nonumber\\
   &\leq  C n^{-1/2}\sigma_{z}^{-3}\cdot\e|Y_{n1}(z)|^{3},
  \end{align}
  where the last inequality follows from the inequality $(a+b)^{p}\leq 2^{p-1}(a^{p}+b^{p})$ for all $p\geq 1$, $a,b\geq 0$. By the definition of $Y_{ni}(z)$, $|\varphi(x)|\leq |x|$ and Lemma \ref{lem-2}, whenever $|z|\le \sqrt{\ln{n}}$, we have
  \begin{align}\label{eq-ww-08}
   \e|Y_{n1}(z)|^{3} &\leq \alpha_{n}^{3}\e|\bar{X}_{1}+n^{-1/2}z\sigma+u_{n}-u|^{3}\nonumber\\
    &\leq C\alpha_{n}^{3}(\e|\bar{X}_{1}|^{3}+(n/\ln n)^{-3/2}\sigma^{3}+|u_{n}-u|^{3})\nonumber\\
    &\leq C\alpha_{n}^{3}(\sqrt{n}\sigma^{3}\beta_{3}+\sigma^{3})\quad \text{for large }n.
  \end{align}
  Thus the proof of \eqref{eq-ww-22.1} is completed by combining \eqref{eq-ww-21}, \eqref{eq-ww-07}, \eqref{eq-ww-08} and \eqref{eq-ww-24.1}.

\end{proof}

\begin{proof}[Proof of Theorem \ref{thm-mean-md}] When $0\leq z\leq 1$, the desired result (\ref{eq-thm-mean-md-1}) follows directly from Theorem \ref{thm-mean-CLT}. Therefore, it suffices to  consider $1\leq z\leq O(n^{1/6}).$ Recall that by (\ref{eq-ww-02}),  
  \begin{align*}
\big|\p\big(\sqrt{n}(\hat{\theta}-u_{n})/\sigma> z\big)-[1-\Phi(z)]\big|&=\big|\p\big(\sqrt{n}(\hat{\theta}-u_{n})/\sigma\leq z\big)-\Phi(z)\big|\nonumber\\
&\leq \max\{T_{1}(z), T_{2}(z)\},
  \end{align*}
  where $T_{1}(z)$ and $T_{2}(z)$ are defined as (\ref{eq-ww-2.1}). So to prove Theorem \ref{thm-mean-md}, it suffices to bound  $T_{1}(z)$ and $T_{2}(z)$.  We will only estimate $T_{2}(z)$, as the argument for $T_{1}(z)$ is similar. For the sake of convenience, define 
  \begin{align*}
   Y'_{ni}(z)&=\varphi\big(\alpha_{n}(X_{i}-u-\delta_{n})\big), \quad (\sigma'_{z})^{2}=\e[Y'_{ni}(z)-\e Y'_{ni}(z)]^{2}, \nonumber\\
   W'_{n}(z)&=\frac{1}{\sqrt{n}}\sum_{i=1}^{n}[Y'_{ni}(z)-\e Y'_{ni}(z)]/\sigma'_{z}\quad  z'_{n}=-\sqrt{n}\e Y'_{n1}(z)/\sigma'_{z}.
  \end{align*}
  We remark that $\sigma'_{z}$ and $z'_{n}$ are obtained by replacing $\bar{X}_{i}$ with $X_{i}-u$ in the definitions of 
$\sigma_{z}$ and $z_{n}$, respectively. Noting that $z\leq O(n^{1/6})$,   following the same arguments used in the proofs of 
(\ref{eq-ww-21}) and (\ref{eq-ww-34.1}), we obtain the following result:   for all sufficiently large $n$,
   \begin{align}\label{eq-we-05}
     a_{n}/\sigma'_{z}\leq 4,\quad |z'_{n}-z|\leq C n^{-1/2}(1+z^{2})\sigma^{-3}\e|X_{1}-u|^{3}:=\tau_{n}. 
   \end{align}
   To avoid repetition, we omit the details here.
   Observe that $\tau_{n}/z\to 0$ as $n\to \infty$, then for all sufficiently large $n$,  $\tau_{n}/z\leq 1$, and hence 
   \begin{align}\label{eq-we-06}
    0\leq z'_{n}\leq 2z\quad \text{for all sufficiently large } n.
   \end{align}
   Let 
  $$
   \xi'_{i}=(Y'_{ni}(z)-\e Y'_{ni}(z))/\sigma'_{z},
 $$
 then $\xi'_{1},\xi'_{2},\cdots,\xi'_{n}$ are i.i.d. random variables with $\e \xi'_{i}=0$ and $\sum_{i=1}^{n}\e (\xi'_{i})^{2}=n$.  Taking $t_{1}=t_{0}\sqrt{\sigma}/2$, then by (\ref{eq-we-05}), $t_{1}(\sigma'_{z})^{-1/2}=t_{0}\alpha_{n}^{-1/2}\sqrt{a_{n}/(4\sigma'_{z})}\leq t_{0}\alpha_{n}^{-1/2}.$ So by inequality $|\varphi(x)|\leq x$, we have   for $1\leq z\leq O(n^{1/6})$, 
 \begin{align*}
   \e\Big\{\exp\Big(t_{1}\sqrt{|\xi'_{i}|}\Big)\Big\}&\leq \exp\big( t_{1}\sqrt{\e  |Y'_{ni}(z)|/\sigma'_{z}}\big)\cdot\e\exp\big(t_{1} \sqrt{Y'_{ni}(z)/\sigma'_{z}} \big)\nonumber\\
   &\leq \exp\big(t_{0}\sqrt{\e  |Y'_{ni}(z)|/\alpha_{n}}\big)\cdot\e\exp\big(t_{0}\sqrt{|Y'_{ni}(z)|/\alpha_{n}}\big)\nonumber\\
   &\leq C\e\Big\{\exp\Big[t_{0}\sqrt{|X_{1}-u|+n^{-1/2}z\sigma+|u_{n}-u|}\Big]\Big\}\nonumber\\
   &\leq C\e\Big\{\exp\Big[t_{0}\sqrt{|X_{1}-u|+C\sigma}\,\Big]\Big\}<\infty,
 \end{align*}
since $\e e^{t_{0}\sqrt{|X_{1}|}}<\infty.$ Then   
 by \citet[Proposition 4.6]{Shaoandchen(2013)}, we have
  \begin{align}\label{eq-we-08}
\Big|\p\Big(\sum_{i=1}^{n}\xi'_{i}/\sqrt{n}\geq z'_{n}\Big)-[1-\Phi(z'_{n})]\Big|\leq Cn^{-1/2}(1+(z'_{n})^{3})(1-\Phi(z'_{n})),
  \end{align}
 for $0\leq z'_{n}\leq t_{1}^{2/3}n^{1/6}$. By (\ref{eq-we-08}) and the definition of $T_{2}(z)$, we have 
  \begin{align}\label{eq-we-09}
  T_{2}(z)&=\Big|\p\Big(\sum_{i=1}^{n}\xi_{i}/\sqrt{n}\geq z'_{n}\Big)-[1-\Phi(z)]\Big|\nonumber\\
  &\leq Cn^{-1/2}(1+(z'_{n})^{3})(1-\Phi(z'_{n}))+ \big|\Phi(z'_{n})-\Phi(z)\big|.
  \end{align}
  For the second term of (\ref{eq-we-09}), since for any $x\in \R,y\geq 1$,
  \begin{align}\label{eq-we-10}
  \big|\Phi(x)-\Phi(y)\big|&\leq  [1-\Phi(y)]\cdot 2\sqrt{2\pi}ye^{y^{2}/2}|\Phi(x)-\Phi(y)|\nonumber\\
  &\leq 2[1-\Phi(y)]\cdot
   y\cdot|y-x|e^{y\cdot |y-x|},
  \end{align}
  where the first inequality follows from the following  well-known inequality
  \begin{align}\label{eq-we-11}
  \frac{ye^{-y^{2}/2}}{(1+y^{2})\sqrt{2\pi}} \leq  1-\Phi(y)\leq \frac{e^{-y^{2}/2}}{y\sqrt{2\pi}}, \quad \text{for all } y>0,
  \end{align}  
  By (\ref{eq-we-05}), we have for $1\leq z\leq 0.5t_{1}^{2/3}n^{1/6}$,
  \begin{align}\label{eq-we-11.1}
  z\cdot|z'_{n}-z|\leq Cn^{-1/2}(1+z^{3})\leq C
  \end{align}
  Combining  (\ref{eq-we-10}) and the two bounds in (\ref{eq-we-11.1}) yields that 
   \begin{align}\label{eq-we-20}
  \big|\Phi(z'_{n})-\Phi(z)\big|&\leq Cn^{-1/2}(1+z^{3})[1-\Phi(z)]\leq C[1-\Phi(z)] 
 \end{align}
  By (\ref{eq-we-06}), (\ref{eq-we-09}) and the two bounds in (\ref{eq-we-20}), we have 
  \[
  T_{2}(z)\leq Cn^{-1/2}(1+z^{3})[1-\Phi(z)].
  \]
This proves (\ref{eq-thm-mean-md-1}).
\end{proof}

\subsection{Proofs of Corollary \ref{coro-mean-MD} and Theorems \ref{thm-self-mean-clt} and \ref{thm-self-mean-md}}
\begin{proof}[Proof of Corollary \ref{coro-mean-MD}] Consider $1\leq z\leq 0.5c_{0}\min\{n^{1/6}, n^{-1/2}a_{n}^{-1}\}$. Let $z'=z-\sqrt{n}(u-u_{n})/\sigma$, then   
\begin{align}\label{eq-wq-01}
|z'-z|\leq \sqrt{n}|u-u_{n}|/\sigma\leq C_{4}\sqrt{n}a_{n}
\end{align}
for some positive constant $C_{4}$. Without loss of generality, we assume that $C_{4}\sqrt{n}a_{n}\leq 1$. Otherwise,  $1-\Phi(z)\geq 1-\Phi(0.5c_{0}n^{-1/2}a_{n}^{-1})\geq 1-\Phi(0.5c_{0}C_{4})>0$, which implies that Corollary \ref{coro-mean-MD} holds trivially. Now, since  $|z'-z|\leq 1\leq z$, it follows that $0\leq z'\leq c_{0}n^{1/6}$ for all sufficiently large $n$. Moreover, by (\ref{eq-we-11}), (\ref{eq-wq-01}), for $1\leq z\leq 0.5c_{0}\min\{n^{1/6}, n^{-1/2}a_{n}^{-1}\}$,
  \begin{align}
  \big|[1-\Phi(z')]-[1-\Phi(z)]\big|&\leq C[1-\Phi(z)]\cdot
   z\cdot|z'-z|e^{z\cdot |z'-z|}\nonumber\\
   &\leq C[1-\Phi(z)] (1+z)\sqrt{n}a_{n} \label{eq-wq-02}\\
   &\leq C[1-\Phi(z)].\label{eq-wq-03}
  \end{align}
  Then Corollary \ref{coro-mean-MD} follows from Theorem \ref{thm-mean-md}, (\ref{eq-wq-02}), (\ref{eq-wq-03}) and the following inequality  
\begin{align*}
 &\big|\p\big(\sqrt{n}(\hat{\theta}-u)/\sigma> z\big)-[1-\Phi(z)]\big|\nonumber\\
  &\quad\leq  \big|\p\big(\sqrt{n}(\hat{\theta}-u_{n})/\sigma> z'\big)-[1-\Phi(z')]\big|+|\Phi(z')-\Phi(z)|.
\end{align*} 
\end{proof}
\begin{proof}[Proof of Theorem \ref{thm-self-mean-clt}]
We only prove that (\ref{eq-thm-self-mean-clt-01}) holds for $|z|\leq \sqrt{\ln n}$.  The proof for  $|z|\geq \sqrt{\ln n}$ can be obtained using an argument similar to that used in the proofs of (\ref{eq-ww-23}) and (\ref{eq-ww-24}).
With a similar argument as that leading to (\ref{eq-ww-02}), we have 
\begin{align}\label{eq-self-mean--01}
\big|\p(\sqrt{n}(\hat{\theta}_{s}-u)/\hat{\sigma}> z)-[1-\Phi(z)]\big|\leq \max\{T_{3}(z), T_{4}(z)\}.
\end{align}
where 
\begin{align*}
  T_{3}(z)&=\Big|\p\Big(\frac{1}{\sqrt{n}a_{n}}\sum_{i=1}^{n}\varphi\Big[a_{n}\frac{X_{i}-u-n^{-1/2}z\hat{\sigma}}{\hat{\sigma}}\Big]> 0\Big)-[1-\Phi(z)]\Big|,\\
  T_{4}(z)&=\Big|\p\Big(\frac{1}{\sqrt{n}a_{n}}\sum_{i=1}^{n}\varphi\Big[a_{n}\frac{X_{i}-u-n^{-1/2}z\hat{\sigma}}{\hat{\sigma}}\Big]\geq  0\Big)-[1-\Phi(z)]\Big|.
\end{align*}
In what follows, we only bound $T_{3}(z)$ on $|z|\leq \sqrt{\ln n}$ since  argument for $T_{4}(z)$ is similar. By the inequality $|\varphi(x)-x|\leq x^{2}$ for any $x\in \R$ and basic inequality $(a+b)^{2}\leq  2a^{2}+2b^{2}$, we have
\begin{align*}
  &\Big|\frac{1}{\sqrt{n}a_{n}}\Big(\sum_{i=1}^{n}\varphi\Big[a_{n}\frac{X_{i}-u-n^{-1/2}z\hat{\sigma}}{\hat{\sigma}}\Big]
  -\frac{a_{n}(X_{i}-u-n^{-1/2}z\hat{\sigma})}{\hat{\sigma}}\Big)\Big|\nonumber\\
  &\leq \frac{2a_{n}}{\sqrt{n}}\sum_{i=1}^{n}\frac{(X_{i}-u)^{2}}{\hat{\sigma}^{2}}+2n^{-1/2}a_{n}z^{2}\nonumber\\
  &\leq \frac{2a_{n}}{\sqrt{n}}\Big(\frac{S_{n}}{\sqrt{n}\hat{\sigma}}\Big)^{2}+2\sqrt{n}a_{n}+2n^{-1/2}a_{n}z^{2}:=\Delta,
\end{align*}
where $S_{n}=\sum_{i=1}^{n}(X_{i}-u)$ and the last inequality follows from the equality 
\[
(n-1)\hat{\sigma}^{2}=\sum_{i=1}^{n}(X_{i}-u)^{2}-\frac{S_{n}^{2}}{n}.
\]
Noting that \[\Delta\leq 6\sqrt{n}a_{n}\quad  \text{ on } \Big\{\Big(\frac{S_{n}}{\sqrt{n}\hat{\sigma}}\Big)^{2}\leq n\Big\} \text{ and } |z|\leq \sqrt{\ln n},\]
then
\begin{align}\label{eq-self-mean--04}
T_{3}(z)\leq \max\{T_{5}(z), T_{6}(z)\}+\p\Big(\Big(\frac{S_{n}}{\sqrt{n}\hat{\sigma}}\Big)^{2}> n\Big),
\end{align}
where 
\begin{align*}
  T_{5}(z)&=\Big|\p\Big(\frac{S_{n}}{\sqrt{n}\hat{\sigma}}> z-6\sqrt{n}a_{n}\Big)-[1-\Phi(z)]\Big|,\\
   T_{6}(z)&=\Big|\p\Big(\frac{S_{n}}{\sqrt{n}\hat{\sigma}}> z+6\sqrt{n}a_{n}\Big)-[1-\Phi(z)]\Big|.
\end{align*}
By Theorem 1.1 in \cite{bentkus1996berry},  
\begin{align}\label{eq-self-mean--05}
\sup_{x\in \R}\Big|\p(S_{n}/(\sqrt{n}\hat{\sigma})> x)-[1-\Phi(x)]\Big|\leq C\beta_{2}+C\beta_{3}, 
\end{align}
which further implies that
\[
\max\{T_{5}(z), T_{6}(z)\}\leq C\beta_{2}+C\beta_{3}+C\sqrt{n}a_{n}.
\]
For the second term of (\ref{eq-self-mean--04}), by (\ref{eq-self-mean--05}) again, we have
\begin{align*}
 \p\Big(\frac{S_{n}}{\sqrt{n}\hat{\sigma}}> \sqrt{n}\Big)&\leq \sup_{x\in \R}\Big|\p\Big(\frac{S_{n}}{\sqrt{n}\hat{\sigma}}> x\Big)-[1-\Phi(x)]\Big|+1-\Phi(\sqrt{n})\nonumber\\
 &\leq C\beta_{2}+C\beta_{3}+Cn^{-1/2},
\end{align*}
 where the last inequality follows the inequality $1-\Phi(x)\leq e^{-x^{2}/2}$ for $x\geq 0.$
\end{proof}
\begin{proof}[Proof of Theorem \ref{thm-self-mean-md}]
 By Theorem \ref{thm-self-mean-clt} and Markov's inequality, we know that (\ref{eq-thm-mean-md-01}) holds for $0\leq z\leq2$.   
Now, consider $z\geq 2.$ 
By (\ref{eq-self-mean--01}), it suffices to prove that there  exist $c_{0},C\in \R_{+}$ such that for  $2\leq z\leq c_{0}\gamma_{n}d_{2+\delta}$
 \begin{align}
   &T_{3}\leq C[1-\Phi(z)]\big((1+z)^{2+\delta}n^{-\delta/2}+(1+z)\sqrt{n}a_{n}\big)d_{2+\delta}^{-(2+\delta)},\label{eq-mean-self-pr-1} \\
    &T_{4} \leq C[1-\Phi(z)]\big((1+z)^{2+\delta}n^{-\delta/2}+(1+z)\sqrt{n}a_{n}\big)d_{2+\delta}^{-(2+\delta)}.\label{eq-mean-self-pr-2}
 \end{align}
 We only prove (\ref{eq-mean-self-pr-1}) and (\ref{eq-mean-self-pr-2}) can be proven similarly. Further, by (\ref{eq-self-mean--04}), it suffices to provide upper bounds for $T_{5},T_{6}$ and $\p(S_{n}^{2}/(n\hat{\sigma}^{2})> n)$. Consider $2\leq z\leq \gamma_{n}d_{2+\delta}/4$. For $T_{5}$, let $ z_{s}=z-6\sqrt{n}a_{n}$, then 
  \begin{align*}
    T_{5}\leq \Big|\p\Big(\frac{S_{n}}{\sqrt{n}\hat{\sigma}}> z_{s}\Big)-[1-\Phi(z_{s})]\Big|+\big|\Phi(z_{s})-\Phi(z)\big|:=T_{51}+T_{52}.
  \end{align*}
  For $T_{52},$ by (\ref{eq-we-10}), we  have 
 \begin{align}\label{eq-mean-self-pr-4}
  T_{52}&\leq 2[1-\Phi(z)]\cdot
   z\cdot|z_{s}-z|e^{z\cdot |z_{s}-s|}\leq C[1-\Phi(z)]\cdot (1+z)\sqrt{n}a_{n} e^{6z\sqrt{n}a_{n}}\nonumber\\
   &\leq C[1-\Phi(z)]\cdot (1+z)\sqrt{n}a_{n}.
 \end{align}
 A straightforward consequence of (\ref{eq-mean-self-pr-4}) is 
  \begin{align}\label{eq-mean-self-pr-4.1}
  1-\Phi(z_{s})\leq C[1-\Phi(z)]\quad \text{ for } 2\leq z\leq \gamma_{n}d_{2+\delta}/4.
  \end{align}
 For $T_{52}$, we  shall apply  \citet[Theorem 2.3]{shao-and-jing(2003)} to bound $T_{52}$. To this end,  let $V_{n}^{2}=\sum_{i=1}^{n}(X_{i}-u)^{2}$ and $b_{n,x}=\big(n/(n+x^{2}-1)\big)^{1/2}$, one can check that 
 \begin{align*}
   \p\big(S_{n}/(\sqrt{n}\hat{\sigma})> x\big)=\p(S_{n}/V_{n}> xb_{n,x}),\quad\text{for all }x\geq 0,
 \end{align*}
 and then 
  \begin{align}\label{eq-mean-self-pr-6}
  T_{51}\leq  \big|\p\big(S_{n}/V_{n}> z'_{s}\big)-[1-\Phi(z'_{s})]\big|+\big|\Phi(z'_{s})-\Phi(z_{s})\big|:=T_{511}+T_{512},
 \end{align}
 where $z'_{s}=z_{s}b_{n,z_{s}}.$  Without loss of generality, we assume that $\sqrt{n}a_{n}\leq 1/6$, otherwise \eqref{eq-thm-mean-md-01} holds trivially.  This implies $6\sqrt{n}a_{n}\leq 1\leq z/2$  and hence 
 \begin{align}\label{eq-mean-self-pr-7}
   1\leq z/2\leq z_{s}\leq z\leq \gamma_{n}d_{2+\delta}/4.
 \end{align} In addition, note that $1/\sqrt{2}\leq b_{n,x}\leq \sqrt{2}$ for $0\leq x\leq \sqrt{n}$ and $d_{2+\delta}\leq 1, \gamma_{n}\leq \sqrt{n}$ for large $n$, then 
\begin{align}\label{eq-mean-self-pr-8}
  0\leq z'_{s}=z_{s}b_{n,z_{s}}\leq \gamma_{n}d_{2+\delta}.
 \end{align}
 Applying  \citet[Theorem 2.3]{shao-and-jing(2003)} yields that
 \begin{align}\label{eq-mean-self-pr-9}
 T_{511}\leq C[1-\Phi(z'_{s})](1+z'_{s})^{2+\delta}n^{-\delta/2}d_{2+\delta}^{-(2+\delta)}.
 \end{align}
 By (\ref{eq-mean-self-pr-9}), to bound $T_{511}$ and $T_{512}$, it suffices to bound $|\Phi(z'_{s})-\Phi(z_{s})|$. In fact, by  (\ref{eq-we-10}), we have 
  \begin{align}\label{eq-mean-self-pr-10}
  |\Phi(z'_{s})-\Phi(z_{s})|&\leq 2[1-\Phi(z_{s})]\cdot
   z_{s}\cdot|z'_{s}-z_{s}|e^{z_{s}\cdot |z'_{s}-z_{s}|}
  \end{align}
Moreover, by the definition of $z'_{s}$ and $b_{n,x}$ and (\ref{eq-mean-self-pr-7}), we have 
  \begin{align}\label{eq-mean-self-pr-11}
   |z'_{s}-z_{s}|&\leq z_{s}|b_{n,z_{s}}-1|=\frac{z_{s}}{b_{n,z_{s}}+1}\cdot \big|b_{n,z_{s}}^{2}-1\big|\nonumber\\
   &\leq 2z\cdot \frac{z_{s}^{2}-1}{n+z_{s}^{2}-1}\leq 4z^{3}n^{-1}.
 \end{align}
Combining the fact that $z\leq \gamma_{n}d_{2+\delta}/4\leq \gamma_{n}\leq n^{1/6}$ and (\ref{eq-mean-self-pr-4.1}), (\ref{eq-mean-self-pr-7}), (\ref{eq-mean-self-pr-10}), (\ref{eq-mean-self-pr-11}) yields that
 \begin{align}\label{eq-mean-self-pr-12}
 |\Phi(z'_{s})-\Phi(z_{s})|\leq C[1-\Phi(z)]\cdot
   z^{4}n^{-1}\leq C[1-\Phi(z)]\cdot
   (1+z^{2+\delta})n^{-\delta/2}
 \end{align}
 By (\ref{eq-mean-self-pr-4}), (\ref{eq-mean-self-pr-6}), (\ref{eq-mean-self-pr-9}) and (\ref{eq-mean-self-pr-12}), we have 
  \begin{align}\label{eq-mean-self-pr-13}
  T_{5}&\leq C[1-\Phi(z)]\cdot
   \big((1+z^{2+\delta})n^{-\delta/2}d_{2+\delta}^{-(2+\delta)}+(1+z)\sqrt{n}a_{n}\big)
  \end{align}
With a similar argument as that leading to (\ref{eq-mean-self-pr-13}), we can obtain the same upper bound for $T_{6}(z).$
 For $\p(S_{n}^{2}/(n\hat{\sigma}^{2})> n)$, by (\ref{eq-pq-05}) and Theorem 2.16 in \cite{shao2009self}, we have for any $x\geq 0$,
 \begin{align*}
\p(S_n / V_n>x)& \leq \p(V_n \leq \sqrt{n}\sigma/ 2)+\p(S_n>x V_n, V_n>\sqrt{n}\sigma / 2) \\
& \leq \p(V_n^2/n \leq \sigma^{2}/4)+\p(S_n>x(4 \sqrt{n}\sigma+V_n)/9)\nonumber\\
&\leq \exp\Big(-c_{\delta}nd_{2+\delta}^{(4+2\delta)/\delta}\Big)+2 \exp(-x^2/162), 
 \end{align*}
 where $c_{\delta}=\frac{\delta   }{(2+\delta)}(\frac{3}{4})^{(2+\delta)/\delta}.$ Then 
 \begin{align*}
   \p(S_{n}/(\sqrt{n}\hat{\sigma})> \sqrt{n})\leq \p(S_n / V_n>\sqrt{n/2})\leq  \exp\Big(-c_{\delta}nd_{2+\delta}^{(4+2\delta)/\delta}\Big)+2\exp(-n/324).
 \end{align*}
By (\ref{eq-we-11}), we have 
 \begin{align*}
 \exp\Big(-c_{\delta}nd_{2+\delta}^{(4+2\delta)/\delta}\Big)\leq C[1-\Phi(z)]z\exp\big(z^{2}/2-c_{\delta}nd_{2+\delta}^{(4+2\delta)/\delta}\big).
 \end{align*}
 Note that $z\leq \gamma_{n}d_{2+\delta}\leq O(n^{1/6})$, then for all  sufficiently large $n$, there exists $c_{1}$ depending on $\delta$ and $d_{2+\delta}$ such that 
 \[
 z^{2}/2-c_{\delta}nd_{2+\delta}^{(4+2\delta)/\delta}\leq -c_{1}nd_{2+\delta}^{(4+2\delta)/\delta},
 \]
 then 
 \begin{align*}
 \exp\Big(-c_{\delta}nd_{2+\delta}^{(4+2\delta)/\delta}\Big)&\leq C[1-\Phi(z)]zn^{-\delta/2}d_{2+\delta}^{-2-\delta}\nonumber\\
 &\leq C[1-\Phi(z)](1+z^{2+\delta})n^{-\delta/2}d_{2+\delta}^{-2-\delta}.
 \end{align*}
 By a similar argument, we can obtain similar upper bound for $\exp(-n/324)$, and hence 
  \begin{align}\label{eq-mean-self-pr-19}
\p(S_{n}/(\sqrt{n}\hat{\sigma})> \sqrt{n})&\leq C[1-\Phi(z)](1+z^{2+\delta})n^{-\delta/2}d_{2+\delta}^{-2-\delta}.
 \end{align}
Similarly, we have the same upper bound for $\p(S_{n}/(\sqrt{n}\hat{\sigma})< -\sqrt{n})$, then 
  \begin{align}\label{eq-mean-self-pr-20}
\p(S_{n}^{2}/(n\hat{\sigma}^{2})> n)&\leq C[1-\Phi(z)](1+z^{2+\delta})n^{-\delta/2}d_{2+\delta}^{-2-\delta}.
 \end{align}
 Combine (\ref{eq-mean-self-pr-13}) and (\ref{eq-mean-self-pr-20}), we obtain  (\ref{eq-mean-self-pr-1}).
\end{proof}
\section{Proofs of main results for Catoni-type robust regression}\label{section-proof-of-catoni-type}

\subsection{Proof of Theorem \ref{thm-nonasymtotic-theory}}
  We apply the generalized Poincar\'{e}-Miranda Theorem \cite[Theorem 2.4]{FRANKOWSKA2018832} to prove Theorem \ref{thm-nonasymtotic-theory}. To this end, let 
  \[
  \mathcal{K}=\big\{\bdbe\in \R^{d}; \|\bdbe-\bdbe^{*}\|_{2}\leq \beta_{0}\big\}. 
  \]
  Clearly, for any $\bdbe\in \partial\mathcal{K}$, the outward normal vector is $N_{\mathcal{K}}(\bdbe)=(\bdbe-\bdbe^{*})/\|\bdbe-\bdbe^{*}\|_{2}$, and hence we only need to show that with probability at least  $1-\varepsilon$, 
  \begin{align}\label{eq-low-log-pr-01}
   \langle h(\bdbe), \bdbe-\bdbe^{*} \rangle \leq 0,\quad \forall \bdbe\in \partial\mathcal{K}.
  \end{align}
 To prove (\ref{eq-low-log-pr-01}), define
 \begin{align*}
   r(\bdbe):=\frac{1}{n \alpha} \sum_{i=1}^n \frac{\bdx_{i}'(\bdbe-\bdbe^{*}) }{L_{n}\|\bdbe-\bdbe^{*}\|_2} \varphi[\alpha(y_{i}-\bdx_{i}'\bdbe)].
 \end{align*}
 By the inequality  $x\varphi(y)\leq |x|\log\big(1+\sgn(x)y+y^{2}/2\big)$ and Jensen's inequality, we have 
 \begin{align}\label{eq-low-log-pr-03}
  &\e[\exp(n\alpha r(\bdbe))]=\e\Big\{\prod_{i=1}^n \exp\Big(\frac{\bdx_{i}'(\bdbe-\bdbe^{*}) }{L_{n}\|\bdbe-\bdbe^{*}\|_2} \varphi[\alpha(y_{i}-\bdx_{i}'\bdbe)]\Big)\Big\}\nonumber\\
  &\leq \e\Big\{\prod_{i=1}^n \exp\Big(\frac{|\bdx_{i}'(\bdbe-\bdbe^{*})| }{L_{n}\|\bdbe-\bdbe^{*}\|_2}\log\Big(
  1+\alpha\cdot \mathrm{sign}[\bdx_{i}'(\bdbe-\bdbe^{*})](y_{i}-\bdx_{i}'\bdbe)\nonumber\\
  &\hspace*{5.5cm}+\frac{\alpha^{2}(y_{i}-\bdx_{i}'\bdbe)^{2}}{2}
  \Big)\Big\}\nonumber\\
  &=\prod_{i=1}^n\e\bigg\{\Big(
  1+\alpha\cdot \mathrm{sign}[\bdx_{i}'(\bdbe-\bdbe^{*})](y_{i}-\bdx_{i}'\bdbe)+\frac{\alpha^{2}(y_{i}-\bdx_{i}'\bdbe)^{2}}{2}
  \Big)^{\frac{|\bdx_{i}'(\bdbe-\bdbe^{*})| }{L_{n}\|\bdbe-\bdbe^{*}\|_2}}\bigg\}\nonumber\\
  &\leq 
  \prod_{i=1}^n\Big(
  1-\alpha|\bdx_{i}'(\bdbe^{*}-\bdbe)|+\frac{\alpha^{2}[\sigma_{i}^{2}+|\bdx_{i}'(\bdbe^{*}-\bdbe)|^{2}]}{2}
  \Big)^{\frac{|\bdx_{i}'(\bdbe-\bdbe^{*})| }{L_{n}\|\bdbe-\bdbe^{*}\|_2}}\nonumber\\
  &\leq \exp\bigg(\sum_{i=1}^{n}\frac{|\bdx_{i}'(\bdbe-\bdbe^{*})| }{L_{n}\|\bdbe-\bdbe^{*}\|_2}\Big(-\alpha|\bdx_{i}'(\bdbe^{*}-\bdbe)|+\frac{\alpha^{2}[\sigma_{i}^{2}+|\bdx_{i}'(\bdbe^{*}-\bdbe)|^{2}]}{2}  \Big)  \bigg),
 \end{align}
 where the last inequality uses the inequality $1+x\leq e^{x}$ for any $x\in \R.$ By (\ref{eq-low-log-pr-03}) and Markov's inequality, with probability at least $1-\varepsilon$, 
 \begin{align*}
  r(\bdbe)\leq \frac{1}{n}\sum_{i=1}^{n}\frac{|\bdx_{i}'(\bdbe-\bdbe^{*})| }{L_{n}\|\bdbe-\bdbe^{*}\|_2}\Big(-|\bdx_{i}'(\bdbe^{*}-\bdbe)|+\frac{\alpha[\sigma_{i}^{2}+|\bdx_{i}'(\bdbe^{*}-\bdbe)|^{2}]}{2}  \Big)+\frac{\log(\varepsilon^{-1})}{n\alpha}.
 \end{align*}
 Now consider $\bdbe\in \partial \mathcal{K}$, then
 \begin{align}\label{eq-low-log-pr-05}
    r(\bdbe)&\leq -\frac{1}{n}\sum_{i=1}^{n}\frac{|\bdx_{i}'(\bdbe-\bdbe^{*})|^{2} }{L_{n}\|\bdbe-\bdbe^{*}\|_2}+\frac{\alpha\bar{\sigma}^{2}}{2}+\frac{\alpha}{2n}\sum_{i=1}^{n}|\bdx_{i}'(\bdbe^{*}-\bdbe)|^{2}  +\frac{\log(\varepsilon^{-1})}{n\alpha}\nonumber\\
    &\leq -\frac{c_{l} }{L_{n}}\|\bdbe-\bdbe^{*}\|_{2} +\frac{\alpha\bar{\sigma}^{2}}{2}+\frac{\alpha c_{u}}{2}\|\bdbe^{*}-\bdbe\|_{2}^{2}  +\frac{\log(\varepsilon^{-1})}{n\alpha}\nonumber\\
    &=-\frac{c_{l}\beta_{0}}{L_{n}} +\frac{\alpha\bar{\sigma}^{2}}{2}+\frac{\alpha c_{u}\beta_{0}^{2} }{2} +\frac{\log(\varepsilon^{-1})}{n\alpha}\nonumber\\
    &=0.
 \end{align}
 This proves (\ref{eq-low-log-pr-01}).

\subsection{Proof of Theorem  \ref{thm-low-BE-bound}}
We begin with a lemma that is essential for the proof of Theorem  \ref{thm-low-BE-bound}.
For each $1\leq i\leq n$, define 
\begin{align*}
 \bar{\varepsilon}_{i}=\varepsilon_{i}\mathbf{1}(L_{n}\alpha_{n}|\varepsilon_{i}|\leq C'),\quad \text{where}\quad C'=\min\Big\{\frac{c_{l}}{\sqrt{c_{u}}20(K_{1}+K_{0})}, \frac{\sqrt{c_{l}}}{6}  \Big\}.
\end{align*}
Let $\bbdbe$ be the solution of the following equation:
\begin{align}\label{eq-mm-02}
    \frac{1}{n\alpha_{n}}\sum_{i=1}^{n}\bdx_{i}\varphi[\alpha_{n}(\bar{\varepsilon}_{i}+\bdx_{i}'(\bdbe^{*}-\bdbe))]=0.
\end{align}
We remark that if $\hbdbe$
  is viewed as a function of $(\varepsilon_{1},\varepsilon_{2},\ldots, \varepsilon_{n})$, then $\bbdbe$ is the same function evaluated at  $(\bar{\varepsilon}_{1},\bar{\varepsilon}_{2},\cdots, \bar{\varepsilon}_{n})$.
\begin{lem}\label{lem-moment-of-beta-beta-star} If Assumptions \ref{assu-low-dimension-gram-matrix} and \ref{assu-BE-low-demension} hold, then there exists a positive constant $C$ such that 
\begin{align}\label{eq-lem-moment-of-beta-pr-01}
  \|\boldsymbol{\bar{\beta}}-\bdbe^{*}\|_{2}\leq C\Big\|\frac{1}{n\alpha_{n}}\sum_{i=1}^{n}\bdx_{i}\big\{\varphi[\alpha_{n}\bar{\varepsilon}_{i}]-\e\varphi[\alpha_{n}\bar{\varepsilon}_{i}]\big\}\Big\|_{2}+C\sqrt{p}L_{n}a_{n}\sigma.
\end{align}
\end{lem}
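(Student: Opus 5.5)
We plan a standard linearization of the estimating equation \eqref{eq-mm-02} around $\bdbe^{*}$, using the boundedness of the truncated errors to keep $\dot{\varphi}$ close to $\dot{\varphi}(0)$. Write $\boldsymbol{\Delta}=\bbdbe-\bdbe^{*}$ and
\[
\bar h(\bdbe)=\frac{1}{n\alpha_{n}}\sum_{i=1}^{n}\bdx_{i}\varphi[\alpha_{n}(\bar{\varepsilon}_{i}-\bdx_{i}'(\bdbe-\bdbe^{*}))].
\]
Since $\bar h(\bbdbe)=0$, Taylor expansion in the argument gives
\[
0=\bar h(\bdbe^{*})-\frac{1}{n}\sum_{i=1}^{n}\bdx_{i}\bdx_{i}'\boldsymbol{\Delta}\,\dot{\varphi}(\alpha_{n}\bar{\varepsilon}_{i})+\boldsymbol{R},
\]
where, by the Lipschitz bound on $\dot{\varphi}$ in Assumption \ref{assu-BE-low-demension}, $\|\boldsymbol{R}\|_{2}\le \frac{K_{1}\alpha_{n}}{2n}\sum_i\|\bdx_{i}\|_{2}|\bdx_{i}'\boldsymbol{\Delta}|^{2}\le \frac{K_{1}}{2}\alpha_{n}L_{n}c_{u}\|\boldsymbol{\Delta}\|_{2}^{2}$.

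The key steps are as follows.
\begin{enumerate}[(i)]
\item \emph{Conditioning of the matrix.} From \eqref{eq-t-04}, $\varphi(x)=x+O(x^{2})$ near $0$, so $\dot{\varphi}(0)=1$. Hence $|\dot{\varphi}(\alpha_{n}\bar{\varepsilon}_{i})-1|\le K_{1}\alpha_{n}|\bar{\varepsilon}_{i}|\le K_{1}C'/L_{n}$. Because $L_{n}^{2}\ge \operatorname{tr}\bdS_{n}/p\ge c_{l}$ and $C'$ is small, the matrix $\frac1n\sum\bdx_i\bdx_i'\dot{\varphi}(\alpha_n\bar\varepsilon_i)$ has smallest eigenvalue at least $c_{l}/2$.
\item \emph{Quadratic inequality.} Combining (i) with the expansion yields
\[
\frac{c_{l}}{2}\|\boldsymbol{\Delta}\|_{2}\le \|\bar h(\bdbe^{*})\|_{2}+\frac{K_{1}c_{u}}{2}\alpha_{n}L_{n}\|\boldsymbol{\Delta}\|_{2}^{2}.
\]
\item \emph{Splitting $\bar h(\bdbe^{*})$.} Write $\bar h(\bdbe^{*})$ as the centered sum appearing in \eqref{eq-lem-moment-of-beta-pr-01} plus $\frac{1}{n\alpha_n}\sum\bdx_i\e\varphi[\alpha_n\bar\varepsilon_i]$. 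For the mean term, $|\varphi(x)-x|\le x^{2}$ gives $|\e\varphi[\alpha_n\bar\varepsilon_1]|/\alpha_n\le |\e\bar\varepsilon_1|+\alpha_n\sigma^2$. Moreover, $|\e\bar\varepsilon_1|=|\e\varepsilon_1\mathbf{1}(L_n\alpha_n|\varepsilon_1|>C')|\le L_n\alpha_n\sigma^2/C'$. Since $\|\frac1n\sum\bdx_i\|_2\le\sqrt{c_u}$, the mean term is bounded by $CL_n\alpha_n\sigma^2=CL_na_n\sigma$. The factor $\sqrt p$ in \eqref{eq-lem-moment-of-beta-pr-01} leaves slack here, either by bounding coordinatewise or by noting $L_n\ge\sqrt{c_l}$.
\item \emph{Absorbing the quadratic term.} This is the main obstacle. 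One needs an a priori bound $\alpha_{n}L_{n}\|\boldsymbol{\Delta}\|_{2}\le c_{l}/(2K_{1}c_{u})$, after which the quadratic term is absorbed into the left side of (ii) and the lemma follows. To get it, argue directly from the equation rather than from Taylor's theorem. The first-order term is $\frac{1}{n}\sum\bdx_i\bdx_i'\boldsymbol{\Delta}$, with $|\varphi[\alpha_n(\bar\varepsilon_i-\bdx_i'\boldsymbol\Delta)]|\le\alpha_n|\bar\varepsilon_i|+\alpha_n L_n\|\boldsymbol\Delta\|_2$ controlling the size of the arguments. The choice of $C'$ (including the $c_l/(20\sqrt{c_u}(K_0+K_1))$ term) is designed so that on the ball $\alpha_nL_n\|\boldsymbol\Delta\|_2\le 2C'$ the derivative $\dot{\varphi}$ stays in $[1/2,3/2]$. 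A Poincar\'e--Miranda argument on this ball, as in the proof of Theorem \ref{thm-nonasymtotic-theory}, then shows that a root $\bbdbe$ exists inside it whenever the right-hand side of \eqref{eq-lem-moment-of-beta-pr-01} is small. When that right-hand side exceeds a constant multiple of $C'/(\alpha_nL_n)$, the bound is trivial after choosing $\bbdbe$ as this root, or by enlarging $C$.
\end{enumerate}
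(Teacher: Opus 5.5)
Your steps (i)--(iii) are sound, but there is a genuine gap in step (iv), which is exactly where the difficulty of the lemma lies. The quadratic inequality in (ii) only closes once you know that $\alpha_nL_n\|\boldsymbol{\Delta}\|_2$ is small for the root $\bbdbe$ of \eqref{eq-mm-02}, and the proposal never proves this.

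A Poincar\'e--Miranda argument gives the existence of \emph{some} root in a ball. It says nothing about where the given root lies. The lemma is used, on $\mathcal{O}$, for whatever solution of \eqref{eq-p-03} defines $\hbdbe$ in the proof of Theorem~\ref{thm-low-BE-bound}. So ``choosing $\bbdbe$ as this root'' changes the statement unless you also prove uniqueness.

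The large right-hand-side case is not ``trivial by enlarging $C$'' either. Without an a priori bound on $\|\bbdbe-\bdbe^{*}\|_2$, there is nothing that lets you compare it to a large right-hand side.

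Finally, the ball $\alpha_nL_n\|\boldsymbol{\Delta}\|_2\le 2C'$ is mis-scaled. $C'$ is not a small absolute constant (it grows like $c_l/\sqrt{c_u}$), so $K_1(C'/L_n+2C')\le 1/2$ does not follow, and hence neither does $\dot\varphi\in[1/2,3/2]$ on that ball. The radius should be of order $1/(K_1\alpha_nL_n)$.

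The paper never localizes the root. Write $r_i$ for the summands of $\langle n\alpha_n\bar h(\bdbe),\bdbe-\bdbe^{*}\rangle$. The paper uses $\sum_i r_i(\bbdbe)=0$, the Catoni envelope $\varphi(x)\le\log(1+x+x^2/2)$ and the monotonicity of $\varphi$ to show that $r_i\le\alpha_n|\bar\varepsilon_i||\bdx_i'\boldsymbol{\Delta}|$ minus a Huber-type term in $\bdx_i'\boldsymbol{\Delta}$. This gives the global inequality \eqref{eq-mm-08}. Combined with $\alpha_n|\bar\varepsilon_i|\le C'/L_n$, it yields $\|R\|_2\le 0.5c_l\|\boldsymbol{\Delta}\|_2$ (see \eqref{eq-mm-09}) for every root, with no case split.

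If you want to keep your approach, you can supply the missing localization by monotonicity along rays:
\begin{enumerate}[(a)]
\item For a unit vector $\boldsymbol{u}$, the function $g(t)=\langle\bar h(\bdbe^{*}+t\boldsymbol{u}),\boldsymbol{u}\rangle$ is non-increasing, because $\varphi$ is non-decreasing.
\item Set $\rho:=1/(4K_1\alpha_nL_n)$. For $0\le t\le\rho$ every argument of $\dot\varphi$ satisfies $|x|\le C'/L_n+1/(4K_1)$. Since $K_1C'/L_n\le 1/20$, this gives $\dot\varphi\ge 0.7$, and hence $g'(t)\le-0.7c_l$.
\item At the origin, $g(0)\le\|\bar h(\bdbe^{*})\|_2\le\sqrt{c_u}\,C'/(\alpha_nL_n)\le c_l/(20K_1\alpha_nL_n)<0.7c_l\rho$.
\end{enumerate}
Hence $g<0$ on $[\rho,\infty)$, so every root satisfies $\|\boldsymbol{\Delta}\|_2<\rho$, and then $\|\boldsymbol{\Delta}\|_2\le\|\bar h(\bdbe^{*})\|_2/(0.7c_l)$. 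In particular, your ``large right-hand side'' case never arises under the paper's choice of $C'$, and your step (iii) then finishes the proof.
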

\begin{proof} It follows from the definition of $\bbdbe$ that
\begin{align}\label{eq-mm-03}
\frac{1}{n}\sum_{i=1}^{n} \bdx_{i}\bdx_{i}'(\bbdbe-\bdbe^{*}) &=\frac{1}{n\alpha_{n}}\sum_{i=1}^{n}\bdx_{i}\varphi[\alpha_{n}(\bar{\varepsilon}_{i}+\bdx_{i}'(\bdbe^{*}-\bbdbe))]-\frac{1}{n}\sum_{i=1}^{n} \bdx_{i}\bdx_{i}'(\bdbe^{*}-\bbdbe) \nonumber\\
&:=\frac{1}{n\alpha_{n}}\sum_{i=1}^{n}\bdx_{i}\varphi[\alpha_{n}\bar{\varepsilon}_{i}]+R.
\end{align}
where 
\begin{align}\label{eq-mm-3.1}
   R:=\frac{1}{n\alpha_{n}}\sum_{i=1}^{n}\bdx_{i}\big(\varphi[\alpha_{n}(\bar{\varepsilon}_{i}+\bdx_{i}'(\bdbe^{*}-\bbdbe))]
-\varphi[\alpha_{n}\bar{\varepsilon}_{i}]-\alpha_{n}\bdx_{i}'(\bdbe^{*}-\bbdbe)\big).
\end{align}
Noting that 
\begin{align*}
\dot{\varphi}_{+}(0)&:=\lim_{x\to 0^{+}}\frac{\varphi(x)}{x}\leq \lim_{x\to 0^{+}}\frac{\log(1+x+x^{2}/2)}{x}=1,\nonumber\\
\text{and }&\quad\lim_{x\to 0^{+}}\frac{\varphi(x)}{x}\geq \lim_{x\to 0^{+}}\frac{-\log(1-x+x^{2}/2)}{x}=1,
\end{align*}
then $\dot{\varphi}_{+}(0)=1.$ Similarly, $\dot{\varphi}_{-}(0)=1$, then $\dot{\varphi}(0)=1.$  So
by Assumption \ref{assu-BE-low-demension} and mean value theorem, there exists $\xi\in (0,1)$ such that 
\begin{align}\label{eq-mm-04}
  &\big|\varphi[\alpha_{n}(\bar{\varepsilon}_{i}+\bdx_{i}'(\bdbe^{*}-\bbdbe))]
-\varphi[\alpha_{n}\bar{\varepsilon}_{i}]-\alpha_{n}\bdx_{i}'(\bdbe^{*}-\bbdbe)\big|\nonumber\\
&\quad=  \big|\big(\dot{\varphi}[\alpha_{n}\bar{\varepsilon}_{i}+\xi\alpha_{n}\bdx_{i}'(\bdbe^{*}-\bbdbe)]
-\dot{\varphi}(0)\big)\cdot\alpha_{n}\bdx_{i}'(\bdbe^{*}-\bbdbe)\big|\nonumber\\
&\quad\leq K_{1}\alpha_{n}^{2}|\bar{\varepsilon}_{i}||\bdx_{i}'(\bdbe^{*}-\bbdbe)|+K_{1}\alpha_{n}^{2}|\bdx_{i}'(\bdbe^{*}-\bbdbe)|^{2}.
\end{align}
In addition, by the condition $|\dot{\varphi}(x)|\leq  K_{0}$, we also have
\begin{align}\label{eq-mm-05}
 \big|\varphi[\alpha_{n}(\bar{\varepsilon}_{i}+\bdx_{i}'(\bdbe^{*}-\bbdbe))]
-\varphi[\alpha_{n}\bar{\varepsilon}_{i}]-\alpha_{n}\bdx_{i}'(\bdbe^{*}-\bbdbe)\big|&\leq 2K_{0}\alpha_{n}|\bdx_{i}'(\bdbe^{*}-\bbdbe)|.
\end{align}
Combine (\ref{eq-mm-04}) and (\ref{eq-mm-05}), we have 
\begin{align*}
&\big|\varphi[\alpha_{n}(\bar{\varepsilon}_{i}+\bdx_{i}'(\bdbe^{*}-\bbdbe))]
-\varphi[\alpha_{n}\bar{\varepsilon}_{i}]-\alpha_{n}\bdx_{i}'(\bdbe^{*}-\bbdbe)\big|\nonumber\\
&\leq \big[K_{1}\alpha_{n}^{2}|\bar{\varepsilon}_{i}||\bdx_{i}'(\bdbe^{*}-\bbdbe)|+K_{1}\alpha_{n}^{2}|\bdx_{i}'(\bdbe^{*}-\bbdbe)|^{2}\big]
\mathbf{1}(\alpha_{n}|\bdx_{i}'(\bdbe^{*}-\bbdbe)|\leq 1)\nonumber\\
&\quad+2K_{0}\alpha_{n}|\bdx_{i}'(\bdbe^{*}-\bbdbe)|\mathbf{1}(\alpha_{n}|\bdx_{i}'(\bdbe^{*}-\bbdbe)|\geq 1),
\end{align*}
which further implies that
\begin{align}\label{eq-mm-07}
\|R\|_{2}&\leq \frac{1}{n\alpha_{n}}\sum_{i=1}^{n}\|\bdx_{i}\|_{2}\cdot\big|\varphi[\alpha_{n}(\bar{\varepsilon}_{i}+\bdx_{i}'(\bdbe^{*}-\bbdbe))]
-\varphi[\alpha_{n}\bar{\varepsilon}_{i}]-\alpha_{n}\bdx_{i}'(\bdbe^{*}-\bbdbe)\big|\nonumber\\
&\leq \frac{L_{n}K_{1}}{n}\sum_{i=1}^{n}\alpha_{n}|\bar{\varepsilon}_{i}||\bdx_{i}'(\bdbe^{*}-\bbdbe)|+ \frac{L_{n}(2K_{0}+K_{1})}{n}\cdot
H_{1}
\end{align}
where 
\begin{align*}
  H_{1}= \sum_{i=1}^{n}\alpha_{n}|\bdx_{i}'(\bdbe^{*}-\bbdbe)|^{2}
\mathbf{1}(\alpha_{n}|\bdx_{i}'(\bdbe^{*}-\bbdbe)|\leq 1)+|\bdx_{i}'(\bdbe^{*}-\bbdbe)|\mathbf{1}(\alpha_{n}|\bdx_{i}'(\bdbe^{*}-\bbdbe)|\geq 1).
\end{align*}
We claim that 
\begin{align}\label{eq-mm-08}
H_{1}\leq 5\sum_{i=1}^{n}\alpha_{n}|\bar{\varepsilon}_{i}||\bdx_{i}'(\bdbe-\bdbe^{*})|.
\end{align}
By (\ref{eq-mm-07}) and (\ref{eq-mm-08}), together with the bound $L_{n}\alpha_{n}|\bar{\varepsilon}_{i}|\leq C'$, we have
\begin{align}\label{eq-mm-09}
 \|R\|_{2}&\leq \frac{10L_{n}(K_{1}+K_{0})}{n}\sum_{i=1}^{n}\alpha_{n}|\bar{\varepsilon}_{i}||\bdx_{i}'(\bdbe^{*}-\bbdbe)|\nonumber\\
 &\leq \frac{10(K_{1}+K_{0})C'}{n}\sum_{i=1}^{n}|\bdx_{i}'(\bdbe^{*}-\bbdbe)|\nonumber\\
 &\leq 10(K_{1}+K_{0})\sqrt{c_{u}}\cdot C'\|\bdbe^{*}-\bbdbe\|_{2}\nonumber\\
 &\leq 0.5c_{l}\|\bdbe^{*}-\bbdbe\|_{2},
\end{align}
where the penultimate inequality follows from 
\begin{align*}
 \sum_{i=1}^{n}|\bdx_{i}'(\bdbe^{*}-\bbdbe)|\leq \sqrt{n}\cdot\sqrt{\sum_{i=1}^{n}|\bdx_{i}'(\bdbe^{*}-\bbdbe)|^{2}}\leq n\sqrt{c_{u}}\|\bdbe^{*}-\bbdbe\|_{2}.
\end{align*}
Since $\bdS_{n}$ is a real symmetric matrix and $\lambda_{\min}(\bdS_{n})\geq c_{l}$, then $\|\bdS_{n} (\bbdbe-\bdbe^{*})\|_{2}\geq c_{l}\|\bbdbe-\bdbe^{*}\|_{2}$. Combining this with (\ref{eq-mm-03}) and (\ref{eq-mm-09}), we obtain 
\begin{align}\label{eq-mm-10}
  0.5c_{l}\|\bbdbe-\bdbe^{*}\|_{2}&\leq \Big\|\frac{1}{n\alpha_{n}}\sum_{i=1}^{n}\bdx_{i}\varphi[\alpha_{n}\bar{\varepsilon}_{i}]\Big\|_{2}.
\end{align}
By the definition of $\bar{\varepsilon}_{i}$ and inequality $|\varphi(x)-x|\leq x^{2}$, we have
\begin{align}\label{eq-mm-10.1}
\big|\e\varphi[\alpha_{n}\bar{\varepsilon}_{i}]\big|&= \big|\e \varphi[\alpha_{n}\bar{\varepsilon}_{i}]-\e \alpha_{n}\bar{\varepsilon}_{i}\big|+\alpha_{n}|\e\bar{\varepsilon}_{i}|\nonumber\\
&\leq \e\big| \varphi[\alpha_{n}\bar{\varepsilon}_{i}]- \alpha_{n}\bar{\varepsilon}_{i}\big|+ \alpha_{n}|\e\varepsilon_{i}\mathbf{1}(L_{n}\alpha_{n}|\varepsilon_{i}|\geq C')|\nonumber\\
 &\leq \alpha_{n}^{2}\e\varepsilon_{i}^{2}+(C')^{-1}L_{n}\alpha_{n}^{2}\e\varepsilon_{i}^{2}\leq CL_{n}\alpha_{n}^{2}\sigma_{i}^{2}.
\end{align}
Then (\ref{eq-lem-moment-of-beta-pr-01}) follows from (\ref{eq-mm-10}), (\ref{eq-mm-10.1}) and inequality $n^{-1}\sum_{i=1}^{n}\|\bdx_{i}\|_{2}^{2}=\text{trace}(\bdS_{n})\leq c_{u}p.$

Now, we prove (\ref{eq-mm-08}). To this end, 
 for each $i\in [n]$, define $$r_{i}(\bdbe):=\bdx_{i}'(\bdbe-\bdbe^{*})\varphi[\alpha_{n}(\bar{\varepsilon}_{i}+\bdx_{i}'(\bdbe^{*}-\bdbe))].$$ 
Denote $\varphi_{+}(x)=\log(1+x+x^{2}/2)$.  By (\ref{eq-t-04}) and  the non-decreasing property of $\varphi$, if $\bdx_{i}'(\bdbe-\bdbe^{*})\geq 0$, we have
\begin{align}\label{eq-mm-11}
     r_{i}(\bdbe)&\leq
     |\bdx_{i}'(\bdbe-\bdbe^{*})|\varphi\big[\max\{\alpha_{n}(\bar{\varepsilon}_{i}+\bdx_{i}'(\bdbe^{*}-\bdbe)),-1\}\big]\nonumber\\
     &\leq |\bdx_{i}'(\bdbe-\bdbe^{*})|\varphi_{+}\big[\max\{\alpha_{n}(\bar{\varepsilon}_{i}+\bdx_{i}'(\bdbe^{*}-\bdbe)),-1\}\big].
\end{align}
Similarly, if $\bdx_{i}'(\bdbe-\bdbe^{*})< 0$, then
\begin{align}\label{eq-mm-12}
     r_{i}(\bdbe)&\leq
     \bdx_{i}'(\bdbe-\bdbe^{*})\varphi\big[\min\{\alpha_{n}(\bar{\varepsilon}_{i}+\bdx_{i}'(\bdbe^{*}-\bdbe)),1\}\big]\nonumber\\
     &\leq |\bdx_{i}'(\bdbe-\bdbe^{*})|\varphi_{+}\big[-\min\{\alpha_{n}(\bar{\varepsilon}_{i}+\bdx_{i}'(\bdbe^{*}-\bdbe)),1\}\big]
\end{align}
Note that $\max\{x,y\}=\frac{|x-y|+x+y}{2}$. So combining (\ref{eq-mm-11}) and (\ref{eq-mm-12}) yields 
\begin{align}\label{eq-mm-13}
   r_{i}(\bdbe)&\leq  
   |\bdx_{i}'(\bdbe-\bdbe^{*})|\varphi_{+}\Big[ \frac{|z_{i}|+z_{i}}{2}-1\Big]\nonumber\\
   &\leq |\bdx_{i}'(\bdbe-\bdbe^{*})|\log\Big(\frac{1}{2}+\frac{1}{8}(|z_{i}|+z_{i})^{2}\Big),
\end{align}
where 
\[
z_{i}=\alpha_{n}\cdot\sgn(\bdx_{i}'(\bdbe-\bdbe^{*}))(\bar{\varepsilon}_{i}+\bdx_{i}(\bdbe^{*}-\bdbe))+1.
\]
If $z_{i}\leq 0$,  by (\ref{eq-mm-13}), we have
\begin{align}\label{eq-mm-14}
  r_{i}(\bdbe)\leq -\log(2) |\bdx_{i}'(\bdbe-\bdbe^{*})|.
\end{align}
If $z_{i}\geq 0$, by the definition of $z_{i}$, we have
\begin{align}\label{eq-mm-15}
  &\log\Big(\frac{1}{2}+\frac{1}{8}(|z_{i}|+z_{i})^{2}\Big)=\log\Big(\frac{1}{2}+\frac{1}{2}z_{i}^{2}\Big)\nonumber\\
  &=\log\Big(1+\alpha_{n}\big(\sgn(\bdx_{i}'(\bdbe-\bdbe^{*}))\bar{\varepsilon}_{i}-|\bdx_{i}'(\bdbe-\bdbe^{*})|\big)
  +\frac{\alpha_{n}^{2}}{2}\big(\bar{\varepsilon}_{i}+\bdx_{i}(\bdbe^{*}-\bdbe)\big)^{2}\Big)\nonumber\\
  &\leq \log\Big(1+\alpha_{n}|\bar{\varepsilon}_{i}|-\alpha_{n}|\bdx_{i}'(\bdbe-\bdbe^{*})|
  +\frac{\alpha_{n}^{2}\bar{\varepsilon}_{i}^{2}}{2}+\alpha_{n}^{2}|\bar{\varepsilon}_{i}|\cdot|\bdx_{i}(\bdbe^{*}-\bdbe)|+\frac{\alpha_{n}^{2}[\bdx_{i}(\bdbe^{*}-\bdbe)]^{2}}{2}\Big)\nonumber\\
  &\leq \log\Big(1+\alpha_{n}|\bar{\varepsilon}_{i}|-\alpha_{n}|\bdx_{i}'(\bdbe-\bdbe^{*})|/2
  +\alpha_{n}^{2}\bar{\varepsilon}_{i}^{2}/2+3\alpha_{n}^{2}|\bar{\varepsilon}_{i}|\cdot|\bdx_{i}(\bdbe^{*}-\bdbe)|/2
\Big):=I,
\end{align}
where the last inequality follows from the fact that if $z_{i}\geq 0$, then
\begin{align*}
   \alpha_{n}|\bdx_{i}(\bdbe^{*}-\bdbe)|\leq 1+\alpha_{n}|\bar{\varepsilon}_{i}|. 
\end{align*}
Since $\frac{1}{n}\sum_{i=1}^{n}\|\bdx_{i}\|_{2}^{2}=\text{trace}(\bdS_{n})\geq c_{l}p$ and $\sum_{i=1}^{n}\|\bdx_{i}\|_{2}^{2}\leq nL_{n}^{2}$, it follows that $L_{n}\geq \sqrt{c_{l}p},$ which further implies 
\[
\alpha_{n}|\bar{\varepsilon}_{i}|\leq \frac{C'}{L_{n}}\leq \frac{1}{6\sqrt{p}}\leq \frac{1}{6}.
\]
So by the inequalities $\log(1+x+x^{2}/2)\leq |x|$ and
$$\log(1+x+y)\leq \log(1+x)+\frac{y}{1+x},\quad x>-1,$$
we have
\begin{align}\label{eq-mm-17}
I&\leq \log\Big(1+\alpha_{n}|\bar{\varepsilon}_{i}|-\alpha_{n}|\bdx_{i}'(\bdbe-\bdbe^{*})|/4
  +\alpha_{n}^{2}\bar{\varepsilon}_{i}^{2}/2
\Big)\nonumber\\
  &\leq \log\big(1+\alpha_{n}|\bar{\varepsilon}_{i}|
  +\alpha_{n}^{2}\bar{\varepsilon}_{i}^{2}/2\big)-\frac{\alpha_{n}|\bdx_{i}'(\bdbe-\bdbe^{*})|}{4(1+\alpha_{n}|\bar{\varepsilon}_{i}|
+\alpha_{n}^{2}\bar{\varepsilon}_{i}^{2}/2)}\nonumber\\
  &\leq  \alpha_{n}|\bar{\varepsilon}_{i}|-\frac{1}{5}\alpha_{n}|\bdx_{i}'(\bdbe-\bdbe^{*})|.
\end{align}
Combining (\ref{eq-mm-15}) and (\ref{eq-mm-17}) yields that for $z_{i}\geq 0$,
\begin{align}\label{eq-mm-18}
  \log\Big(\frac{1}{2}+\frac{1}{8}(|z_{i}|+z_{i})^{2}\Big)&\leq \alpha_{n}|\bar{\varepsilon}_{i}|-\frac{1}{5}\alpha_{n}|\bdx_{i}'(\bdbe-\bdbe^{*})|.
\end{align}
It follows from (\ref{eq-mm-14}) and (\ref{eq-mm-18}) that
\begin{align}\label{eq-mm-19}
   r_{i}(\bdbe)&\leq \alpha_{n}|\bar{\varepsilon}_{i}||\bdx_{i}'(\bdbe-\bdbe^{*})|\mathbf{1}(z_{i}\geq 0)-0.2\alpha_{n}|\bdx_{i}'(\bdbe-\bdbe^{*})|^{2}\mathbf{1}(z_{i}\geq 0)\nonumber\\
   &\quad-\log(2)|\bdx_{i}'(\bdbe-\bdbe^{*})|\mathbf{1}(z_{i}\leq 0).
\end{align} 
If $\alpha_{n}|\bdx_{i}'(\bdbe-\bdbe^{*})|\geq 1$, then 
\begin{align}\label{eq-mm-20}
 & 0.2\alpha_{n}|\bdx_{i}'(\bdbe-\bdbe^{*})|^{2}\mathbf{1}(z_{i}\geq 0)+\log(2)|\bdx_{i}'(\bdbe-\bdbe^{*})|\mathbf{1}(z_{i}\leq 0)\nonumber\\
  &\geq 0.2|\bdx_{i}'(\bdbe-\bdbe^{*})|\mathbf{1}(z_{i}\geq 0)+\log(2)|\bdx_{i}'(\bdbe-\bdbe^{*})|\mathbf{1}(z_{i}\leq 0)\nonumber\\
  &\geq 0.2|\bdx_{i}'(\bdbe-\bdbe^{*})|.
\end{align}
If $\alpha_{n}|\bdx_{i}'(\bdbe-\bdbe^{*})|\leq 1$, then 
\begin{align}\label{eq-mm-21}
  & 0.2\alpha_{n}|\bdx_{i}'(\bdbe-\bdbe^{*})|^{2}\mathbf{1}(z_{i}\geq 0)+\log(2)|\bdx_{i}'(\bdbe-\bdbe^{*})|\mathbf{1}(z_{i}\leq 0)\nonumber\\
  &\geq 0.2|\bdx_{i}'(\bdbe-\bdbe^{*})|\mathbf{1}(z_{i}\geq 0)+\log(2)\alpha_{n}|\bdx_{i}'(\bdbe-\bdbe^{*})|^{2}\mathbf{1}(z_{i}\leq 0)\nonumber\\
  &\geq 0.2\alpha_{n}|\bdx_{i}'(\bdbe-\bdbe^{*})|^{2}.
\end{align}
Combining (\ref{eq-mm-19})--(\ref{eq-mm-21})  and the fact that  $\sum_{i=1}^{n}r_{i}(\bbdbe)=0$ yields that
\begin{align}\label{eq-mm-22}
&\sum_{i=1}^{n}|\bdx_{i}'(\bbdbe-\bdbe^{*})|\mathbf{1}(\alpha_{n}|\bdx_{i}'(\bbdbe-\bdbe^{*})|\geq 1)+\alpha_{n}|\bdx_{i}'(\bbdbe-\bdbe^{*})|^{2}\mathbf{1}(\alpha_{n}|\bdx_{i}'(\bbdbe-\bdbe^{*})|\leq 1)\nonumber\\
&\leq 5\sum_{i=1}^{n}\alpha_{n}|\bar{\varepsilon}_{i}||\bdx_{i}'(\bbdbe-\bdbe^{*})|,
\end{align}
and \eqref{eq-mm-08} follows.
\end{proof}
Now, we are ready to prove Theorem \ref{thm-low-BE-bound}.

\begin{proof}[Proof of Theorem \ref{thm-low-BE-bound}] 
 We shall apply  \citet[Corollary 2.2]{shao(2022)} to prove Theorem \ref{thm-low-BE-bound}. 
To this end, we first decompose $\sqrt{n}\bdS_{n}^{1/2}(\hbdbe-\bdbe^{*}-\boldsymbol{\delta}_{n})/\tilde{\sigma}$ into the sum of independent random vectors plus a remainder.
   By the definition of $\hbdbe$, we have 
\begin{align*}
   \bdS_{n} (\hbdbe-\bdbe^{*})
   &=\frac{1}{n\alpha_n}\sum_{i=1}^{n} \alpha_{n}\bdx_{i}\bdx_{i}'(\hbdbe-\bdbe^{*})\\
   &\ =
\frac{1}{n\alpha_{n}}\sum_{i=1}^{n}\bdx_{i}\big(\varphi[\alpha_{n}(\varepsilon_{i}+\bdx_{i}'(\bdbe^{*}-\hbdbe))]
-\varphi[\alpha_{n}\varepsilon_{i}]-\alpha_{n}\bdx_{i}'(\bdbe^{*}-\hbdbe)\big)\\
&\quad +\frac{1}{n\alpha_{n}}\sum_{i=1}^{n}\bdx_{i}\varphi[\alpha_{n}\varepsilon_{i}],
\end{align*}
which further implies that 
\begin{align*}
\frac{\sqrt{n}\bdS_{n}^{1/2}(\hbdbe-\bdbe^{*}-\boldsymbol{\delta}_{n})}{\tilde{\sigma}}=W+D,
\end{align*}
where 
\begin{align*}
W&=\frac{\bdS_{n}^{-1/2}}{\sqrt{n}\alpha_{n}\tilde{\sigma}}\sum_{i=1}^{n}\bdx_{i}\big\{\varphi[\alpha_{n}\varepsilon_{i}]-\e\varphi[\alpha_{n}\varepsilon_{i}]\big\}:=\sum_{i=1}^{n}\bdxi_{i},\\
  D&=\frac{\bdS_{n}^{-1/2}}{\sqrt{n}\alpha_{n}\tilde{\sigma}}\sum_{i=1}^{n}\bdx_{i}\big(\varphi[\alpha_{n}(\varepsilon_{i}+\bdx_{i}'(\bdbe^{*}-\hbdbe))]
-\varphi[\alpha_{n}\varepsilon_{i}]-\alpha_{n}\bdx_{i}'(\bdbe^{*}-\hbdbe)\big).
\end{align*} 
Note that $\e\{\bdxi_{i}\}=0$ for any $i\in [n]$ and 
\begin{align}\label{eq-kk-01z}
    \sum_{i=1}^{n}\e\big\{\bdxi_{i}\bdxi_{i}'\mid \bdx\big\}=\bdS_{n}^{-1/2}\cdot\frac{1}{n}
\sum_{i=1}^{n}\bdx_{i}\bdx'_{i}\cdot\bdS_{n}^{-1/2}=I_{p},
\end{align}
Therefore, the conditions of Corollary 2.2 in \cite{shao(2022)} are satisfied, and applying the Corollary yields 
\begin{align}\label{eq-k-01}
   &\sup_{A\in\mathcal{A}}\big|\p\big(\sqrt{n}\bdS_{n}^{1/2}(\hbdbe-\bdbe^{*}-\boldsymbol{\delta}_{n})/\tilde{\sigma}\in A \mid \bdx\big)-\p(Z\in A)\big|\nonumber\\
   &\qquad\quad\leq 259 p^{1 / 2} \sum_{i=1}^{n}\e\big\{\|\bdxi_{i}\|_{2}^{3}\mid \bdx\big\}+2 \mathbb{E}\{\|W\|_{2} \Delta\mid \bdx\}\nonumber\\
   &\qquad\qquad+2\sum_{i=1}^n \mathbb{E}\big\{\|\bdxi_i\|_{2}|\Delta-\Delta_{(i)}| \mid\bdx\big\}+\p(\mathcal{O}^{c}\mid \bdx)
\end{align}
for any measurable set $\mathcal{O}$ and  any random variables $\Delta$ and $(\Delta_{(i)})_{1 \leq i \leq n}$ such that $\Delta \geq\|D\|_{2}\mathbf{1}(\mathcal{O})$ and $\Delta_{(i)}$ is independent of $\bdxi_i$ conditional on $\bdx$.  
Choose 
\[
\mathcal{O}=\big\{ L_{n}\alpha_{n}|\varepsilon_{1}|\leq C', L_{n}\alpha_{n}|\varepsilon_{2}|\leq C', \cdots, L_{n}\alpha_{n}|\varepsilon_{n}|\leq C' \big\}.
\]
Then by  Markov's inequality, we have 
\begin{align}\label{eq-k-02}
    \p(\mathcal{O}^{c}\mid \bdx)\leq \sum_{i=1}^{n}\p(L_{n}\alpha_{n}|\varepsilon_{i}|\geq C'\mid \bdx)\leq CL_{n}^{3}n^{-1/2}\e|\varepsilon_{1}|^{3}/\sigma^{3}.
\end{align}
To choose a suitable $\Delta$, let us first give an upper bound for $\|D\|_{2}\mathbf{1}(\mathcal{O})$.
For any $1\leq i\leq n$, let $\bar{\varepsilon}_{i}=\varepsilon_{i}\mathbf{1}(L_{n}\alpha_{n}|\varepsilon_{i}|\leq C')$ and $\bbdbe$ be the solution of  equation (\ref{eq-mm-02}).  Under the event $\mathcal{O}$, it follows from (\ref{eq-mm-04})  that
  \begin{align}\label{eq-kk-01a}
      \|D\|_{2}&=\Big\| \frac{\bdS_{n}^{-1/2}}{\sqrt{n}\alpha_{n}\tilde{\sigma}}\sum_{i=1}^{n}\bdx_{i}\big(\varphi\big[\alpha_{n}(\bar{\varepsilon}_{i}+\bdx_{i}'(\bdbe^{*}-\bbdbe))\big]
  -\varphi[\alpha_{n}\bar{\varepsilon}_{i}]-\alpha_{n}[\bar{\varepsilon}_{i}+\bdx_{i}'(\bdbe^{*}-\bbdbe)]\big)\Big\|_{2}\nonumber\\
 &\leq \frac{c_{l}^{-1/2}}{\sqrt{n}\alpha_{n}\tilde{\sigma}}\Big\|\sum_{i=1}^{n}\bdx_{i}\big(\varphi\big[\alpha_{n}(\bar{\varepsilon}_{i}+\bdx_{i}'(\bdbe^{*}-\bbdbe))\big]
 -\varphi[\alpha_{n}\bar{\varepsilon}_{i}] -\alpha_{n}[\bar{\varepsilon}_{i}+\bdx_{i}'(\bdbe^{*}-\bbdbe)]\big)\Big\|_{2}\nonumber\\
  &=\frac{c_{l}^{-1/2}\sqrt{n}}{\tilde{\sigma}}\|R\|_{2},
  \end{align}
  with $R$ defined in \eqref{eq-mm-3.1}. Combining this with \eqref{eq-mm-09} and Lemma \ref{lem-moment-of-beta-beta-star} gives us, under the event $\mathcal{O}$, 
  \begin{align} \label{eq-kk-01a-2}
     \begin{split}
         \|D\|_{2}&\le \frac{CL_{n}}{\sqrt{n}\tilde{\sigma}}\sum_{i=1}^{n} \alpha_{n} |\bar{\varepsilon}_{i}|\bdx_{i}'(\bdbe^{*}-\bbdbe)|\\
      &\le \frac{CL_{n}\alpha_{n}}{\sqrt{n}\tilde{\sigma}}\sum_{i=1}^{n} |\varepsilon_{i}|\|\bdx_{i}\|_{2}\|\bdbe^{*}-\bbdbe\|_{2}\\
      &\le \frac{CL_{n}\alpha_{n}}{\sqrt{n}\tilde{\sigma}}\sum_{i=1}^{n} |\varepsilon_{i}|\|\bdx_{i}\|_{2}\bigg\{ \Big\|\frac{1}{n\alpha_{n}}\sum_{j=1}^{n}\bdx_{j}\big\{\varphi[\alpha_{n}\bar{\varepsilon}_{j}]-\e\varphi[\alpha_{n}\bar{\varepsilon}_{j}]\big\}\Big\|_{2}+\sqrt{p}L_{n}a_{n}\sigma\bigg\}.
     \end{split}
  \end{align}
   What's more, for $\tilde{\sigma}$, note that
   \begin{align}\label{eq-kk-01b}
      \begin{split}
          \big|\tilde{\sigma}^{2}/\sigma^{2}-1\big|
          &=\frac{1}{\sigma^{2}}\left|\operatorname{Var}(\varphi[\alpha_n \varepsilon_1])/\alpha_n^{2}-\sigma^{2}\right| \\
          &=\frac{1}{\sigma^{2}}\Big|\Big(\frac{1}{\alpha_n^{2}}\mathbb{E}[\varphi^{2}(\alpha_n \varepsilon_1)]-\sigma^{2}\Big)+\Big[\frac{1}{\alpha_n}\mathbb{E}\varphi(\alpha_n \varepsilon_1)\Big]^{2} \Big|.
      \end{split}
  \end{align}
  By the same argument as in (\ref{eq-t-3.1}), inequalities $|\varphi(x)|\leq |x|$, $|\varphi(x)-x|\leq |x|^{2}$ and noting that Assumption \ref{assu-mean-CLT} ensures $\alpha_{n}=a_n \sigma^{-1}\le C n^{-1/2}$, we have
  \begin{align*}
      \Big|\frac{1}{\alpha_n^{2}}\mathbb{E}[\varphi^{2}(\alpha_n \varepsilon_1)]-\sigma^{2}\Big|\le 4\alpha_{n} \mathbb{E}|\varepsilon_1|^{3}\le Cn^{-1/2}\sigma^{-1}\mathbb{E}|\varepsilon_1|^{3},
  \end{align*}
  and
  \begin{align*}
      \Big[\frac{1}{\alpha_n}\mathbb{E}\varphi(\alpha_n \varepsilon_1)\Big]^{2}
      &=\frac{1}{\alpha_n^{2}}\big[\mathbb{E}\{\varphi(\alpha_n \varepsilon_1)-\alpha_n\varepsilon_1\}\big]^{2}\\
      &\le \frac{1}{\alpha_{n}^{2}}\mathbb{E}\left|\varphi(\alpha_n \varepsilon_1)-\alpha_n\varepsilon_1\right|\cdot\mathbb{E}\{|\varphi(\alpha_n\varepsilon_1)|+\alpha_{n}|\varepsilon_{1}|\}\\
      &\le 2\alpha_{n} \mathbb{E}[\varepsilon_1^{2}]\cdot \mathbb{E}|\varepsilon_1|\le 2\alpha_{n}\mathbb{E}\left|\varepsilon_{1}\right|^{3}\le Cn^{-1/2}\sigma^{-1}\mathbb{E}|\varepsilon_1|^{3}.
  \end{align*}
  These together with \eqref{eq-kk-01b} gives us
  \begin{align} \label{eq-kk-01b-2}
      \big|\tilde{\sigma}^{2}/\sigma^{2}-1\big|\le c_{0}n^{-1/2}\frac{\mathbb{E}|\varepsilon_{1}|^{3}}{\sigma^{3}}
  \end{align}
 for some positive constant $c_{0}$.
  Without loss of generality, we assume that  \begin{align}\label{eq-kk-01c} pL_{n}^{3}n^{-1/2}\frac{\e|\varepsilon_{1}|^{3}}{\sigma^{3}}\leq 0.5\min\big\{c_{l}^{3/2}c_{0}^{-1}, 1\big\}.
  \end{align}
  Otherwise, (\ref{eq-thm-low-BE-bound-01}) holds with $C=2\max\big\{c_{0}c_{l}^{-3/2},1\big\}.$ By (\ref{eq-kk-01b-2}), (\ref{eq-kk-01c}) and the fact that $L_{n}\geq \sqrt{c_{l}p}$, we have $\tilde{\sigma}^{2}\geq 0.5\sigma^{2}.$ Combining this with (\ref{eq-kk-01a-2}) and the fact that $\alpha_{n}\le Cn^{-1/2}$, we obtain, under the event $\mathcal{O}$
  \begin{align}\label{eq-kk-01d}
      \begin{split}
          \|D\|_2&\leq \frac{CL_{n}\alpha_{n}}{\sqrt{n}\sigma}\sum_{i=1}^{n} |\varepsilon_{i}|\|\bdx_{i}\|_{2}\bigg\{ \Big\|\frac{1}{n\alpha_{n}}\sum_{j=1}^{n}\bdx_{j}\big\{\varphi[\alpha_{n}\bar{\varepsilon}_{j}]-\e\varphi[\alpha_{n}\bar{\varepsilon}_{j}]\big\}\Big\|_{2}+\sqrt{p}L_{n}a_{n}\sigma\bigg\}\\
      &\le c_1L_{n}T_{n}\big\{ \|M_{n}\|_{2}+\sqrt{p}L_{n}n^{-1/2}\big\},
      \end{split}
  \end{align}
  for some constant $c_1$, where 
  \[
Y_{i}=\frac{\varphi[\alpha_{n}\bar{\varepsilon}_{i}]-\e\varphi[\alpha_{n}\bar{\varepsilon}_{i}]}{\alpha_{n}},\quad M_{n}=\frac{1}{n\sigma}\sum_{i=1}^{n}\bdx_{i}Y_{i}\quad \text{and}\quad T_{n} = \frac{1}{n\sigma}\sum_{i=1}^{n}\|\bdx_{i}\|_{2}|\bar{\varepsilon}_{i}|.
\]
  Accordingly, we take 
  $$\Delta := c_1L_{n}T_{n}\big\{ \|M_{n}\|_{2}+\sqrt{p}L_{n}n^{-1/2}\big\} ,$$
   and $\Delta_{(i)}$ as the random variable obtained by substituting $\varepsilon_{i}$ in $\Delta$ with an independent copy $\varepsilon_i'$.

Up to this point, 
$\Delta$ and $\Delta_{(i)}$ have been well-defined. In what follows, we sequentially provide upper bounds for the first three terms  of (\ref{eq-k-01}). For convenience, in the following, we use $\e_{\bdx}[\,\cdot\,]$ to denote $\e [\,\cdot\mid \bdx]$. 

For the first term of (\ref{eq-k-01}), by the definition of $\bdxi_{i}$ and the inequality $|\varphi(x)|\leq |x|$, we have
\begin{align}\label{eq-tt-01}
  p^{1 / 2} \sum_{i=1}^{n}\e_{\bdx}\|\bdxi_{i}\|_{2}^{3}\leq Cp^{1/2}n^{-3/2}\frac{\e|\varepsilon_{1}|^{3}}{\sigma^{3}}\cdot \sum_{i=1}^{n}\|\bdx_{i}\|_{2}^{3}\leq Cp^{3/2}L_{n}n^{-1/2}\frac{\e|\varepsilon_{1}|^{3}}{\sigma^{3}},
\end{align} 
where the last inequality follows from 
\begin{align}\label{eq-kk-1.1}
    \frac{1}{n}\sum_{i=1}^{n}\|\bdx_{i}\|_{2}^{2}=\text{trace}(\bdS_{n})\leq c_{u}p.
\end{align}

For the second term of (\ref{eq-k-01}), by the Cauchy–Schwarz inequality, we obtain
\begin{align} \label{eq-aa-01}
   &\e_{\bdx}\{\|W\|_{2} \Delta\}\nonumber\\
    &\quad\le CL_{n} \big(\e_{\bdx}\{\|W\|_{2}^{2}\}\big)^{1/2} \big(\e_{\bdx}\{|T_{n}|^{4}\}\big)^{1/4}\big[\big(\e_{\bdx}\{\|M_{n}\|_{2}^{4}\}\big)^{1/4}+\sqrt{p}L_{n}n^{-1/2}\big].
\end{align}
It follows from the definition of $W$ and $M_{n}$, the inequality $|\varphi(x)|\le x$, $\tilde{\sigma}^{2}\ge 0.5\sigma^{2}$ and the independence that
\begin{align} \label{eq-aa-02}
    \e_{\bdx}\{\|W\|_{2}^{2}\}&=\frac{1}{n \alpha_{n}^{2} \tilde{\sigma}^{2}}\sum_{i=1}^{n} \|\bdS_{n}^{-1/2}\bdx_{i}\|_{2}^{2} \operatorname{Var}\left(\varphi[\alpha_{n}\varepsilon_{i}]\right)
    =p,
\end{align}
and
\begin{align}\label{eq-aa-03}
    \e_{\bdx}[M_{n}^{4}]&=\frac{1}{n^{4}\sigma^{4}}\e_{\bdx}\Big(\sum_{i=1}^{n}\|\bdx_{i}\|^{2}Y_{i}^{2}+\sum_{i\neq j=1}^{n}\bdx_{i}'\bdx_{j}Y_{i}Y_{j}\Big)\Big(\sum_{k=1}^{n}\|\bdx_{k}\|^{2}Y_{k}^{2}+\sum_{k\neq l=1}^{n}\bdx_{k}'\bdx_{l}Y_{k}Y_{l}\Big)\nonumber\\
    &=\frac{1}{n^{4}\sigma^{4}}\Big(\e_{\bdx}\Big(\sum_{i=1}^{n}\|\bdx_{i}\|^{2}Y_{i}^{2}\Big)^{2}+\sum_{i\neq j=1}^{n}\e_{\bdx}|\bdx_{i}'\bdx_{j}|^{2}Y_{i}^{2}Y_{j}^{2}\Big)\nonumber\\
    &\leq \frac{C}{n^{4}\sigma^{4}}\Big( \sum_{i=1}^{n}\e_{\bdx}\|\bdx_{i}\|^{4}\e|\bar{\varepsilon}_{i}|^{4}+\sum_{i\neq j=1}^{n}\|\bdx_{i}\|^{2}\|\bdx_{j}\|^{2}\sigma^{4} \Big)\nonumber\\
    &\leq \frac{C}{n^{4}} \Big( n\alpha_{n}^{-1}pL_{n}\e|\varepsilon_{1}|^{3}/\sigma^{2}+n^{2}p^{2} \Big)\nonumber\\
    &\leq Cn^{-2}p^{2} \e|\varepsilon_{1}|^{3}/\sigma^{3}.
\end{align}
For $\e_{\bdx}\{|T_{n}|^{4}\}$, by the Cauchy–Schwarz inequality and \eqref{eq-kk-1.1}, we have
\begin{align*}
    T_{n}\le \frac{1}{\sigma} \Big(\frac{1}{n}\sum_{i=1}^{n}\|\bdx_{i}\|_{2}^{2}\Big)^{1/2}\Big(\frac{1}{n}\sum_{i=1}^{n}\bar{\varepsilon}_{i}^{2}\Big)^{1/2}
    \le \frac{\sqrt{c_u p}}{\sigma}\Big(\frac{1}{n}\sum_{i=1}^{n}\bar{\varepsilon}_{i}^{2}\Big)^{1/2}.
\end{align*}
Hence, it then follows from \eqref{eq-kk-01c}, $|\bar{\varepsilon}_{i}|\le \frac{C'}{\alpha_{n}L_{n}}$ and $\alpha_{n}=a_n \sigma^{-1}\le Cn^{-1/2}$ that
\begin{align} \label{eq-aa-04}
   \begin{split}
        \e_{\bdx}\{|T_{n}|^{4}\}&\le \frac{c_{u}^{2}p^{2}} {n^{2}\sigma^{4}} \e\Big\{\Big(\sum_{i=1}^{n}\bar{\varepsilon}_i^{2}\Big)^{2}\Big\}
    \le \frac{c_{u}^{2}p^{2}} {n^{2}\sigma^{4}} \left(n(n-1)\sigma^{4}+n\e|\bar{\varepsilon}_1|^{4}\right)\\
    &\le \frac{Cp^{2}} {\sigma^{4}} \Big(\sigma^{4}+\frac{1}{\alpha_n L_{n}n}\e|\varepsilon_1|^{3}\Big)
    \le Cp^{2}.
   \end{split}
\end{align}
Now, combining \eqref{eq-aa-01}, \eqref{eq-aa-02}, \eqref{eq-aa-03} and \eqref{eq-aa-04}, we obtain
\begin{align} \label{eq-tt-02}
    \e_{\bdx}\{\|W\|_{2}\Delta\}\le \frac{CL_{n}^{2}p^{3/2}\e|\varepsilon_1|^{3}}{\sqrt{n}\sigma^{3}}.
\end{align}

As for the third term of (\ref{eq-k-01}), let $Y_{i}'$ $M_n^{(i)}$ and $T_n^{(i)}$ be the versions of $Y_i$, $M_n$ and $T_n$ in which $\varepsilon_i$ is replaced by an independent copy $\varepsilon_i'$. Then, it follows from the definitions of $\Delta$ and $\Delta_i$ that,  for $1\le i\le n$,
\begin{align*}
    \begin{split}
        |\Delta-\Delta_i|&=c_1L_{n}\left|\sqrt{p}L_{n}n^{-1/2}(T_{n}-T_{n}^{(i)})+T_{n}\|M_{n}\|_{2}-T_{n}^{(i)}\|M_{n}^{(i)}\|_{2} \right|\\
        &\le c_1L_{n}\Big\{\sqrt{p}L_{n}n^{-1/2}|T_{n}-T_{n}^{(i)}|+\big|T_n-T_{n}^{(i)}\big|\|M_{n}\|_{2}\\
        &\hspace*{5.5cm}+|T_{n}^{(i)}|\Big|\|M_{n}\|_{2}-\|M_{n}^{(i)}\|_{2}\Big| \Big\}.
    \end{split}
\end{align*}
Applying this with the H\"older inequality, we obtain, for $1\le i\le n$
\begin{align} \label{eq-aa-05}
     \begin{split}
        & \e_{\bdx}\{\|\bdxi_i\|_{2}|\Delta-\Delta_i|\}\\
    &\le CL_{n}\left(\e_{\bdx}\|\bdxi_i\|_{2}^{3} \right)^{1/3}\bigg\{\sqrt{p}L_{n}n^{-1/2}\left(\e_{\bdx}|T_{n}-T_{n}^{(i)}|^{3}\right)^{1/3}\\
    &\hspace*{3.5cm}+ \left(\e_{\bdx}|T_{n}-T_{n}^{(i)}|^{3}\right)^{1/3} \left(\e_{\bdx}\|M_{n}\|_{2}^{3} \right)^{1/3}\\
    &\hspace*{3.5cm}  +\left(\e_{\bdx}|T_{n}^{(i)}|^{3}\right)^{1/3} \Big(\e_{\bdx}\Big|\|M_{n}\|_{2}-\|M_{n}^{(i)}\|_{2}\Big|^{3} \Big)^{1/3}  \bigg\} .
     \end{split}
\end{align}
Since $\{\varepsilon_i\}_{1\le i\le n}$ are identically distributed, by the definition of $\bdxi_i$, the property $|\varphi(x)|\le |x|$ and $\tilde{\sigma}^{2}\ge 0.5 \sigma^{2}$, we obtain
\begin{align} \label{eq-aa-06}
    \e_{\bdx}\|\bdxi_{i}\|_{2}^{3}
    &\le \frac{1}{n^{3/2}\alpha_{n}^{3}\tilde{\sigma}^{3}}\|\bdS_n^{-1/2}\bdx_i\|_{2}^{3}\e\left|\varphi[\alpha_{n}\varepsilon_1]\right|^{3}  
    \le \frac{C\|\bdx_{i}\|_{2}^{3}\e|\varepsilon_1|^{3}}{n^{3/2}\sigma^{3}}.
\end{align}
At the same time, by definitions and $|\varphi(x)|\le |x|$, we have
\begin{align} \label{eq-aa-07}
    \e_{\bdx}\big|T_{n}^{(i)}-T_{n}\big|^{3}=\frac{1}{n^{3}\sigma^{3}}\|\bdx_i\|_{2}^{3}\e\left||\bar{\varepsilon}_{i}'|-|\bar{\varepsilon}_i|\right|^{3}
    &\le \frac{C\|\bdx_{i}\|_{2}^{3}\e|\varepsilon_1|^{3}}{n^{3}\sigma^{3}} ,
\end{align}
and
\begin{align} \label{eq-aa-08}
    \begin{split}
        \e_{\bdx}\big|\|M_{n}\|_{2}-\|M_{n}^{(i)}\|_{2}\big|^{3}
    &\le \e_{\bdx}\|M_{n}-M_{n}^{(i)}\|_{2}^{3}  \\ 
    &=\e_{\bdx}\Big\|\frac{1}{n\alpha_{n}\sigma}\bdx_i\left\{\varphi[\alpha_n\bar{\varepsilon}_i]-\varphi[\alpha_n\bar{\varepsilon}_i']\right\}\Big\|_{2}^{3}\\
    &\le \frac{C\|\bdx_{i}\|_{2}^{3}}{n^{3}}\e|\bar{\varepsilon}_i^{3}|\le \frac{C\|\bdx_{i}\|_{2}^{3}\e|\varepsilon_1|^{3}}{n^{3}\sigma^{3}}.
    \end{split}
\end{align}
Substituting \eqref{eq-aa-03}--\eqref{eq-aa-04} and \eqref{eq-aa-06}--\eqref{eq-aa-08} into \eqref{eq-aa-05}, and using \eqref{eq-kk-1.1}, we obtain
\begin{align}\label{eq-aa-09}
\sum_{i=1}^{n} \e_{\bdx}\!\left[\|\bdxi_i\|_{2}\,|\Delta-\Delta_i|\right]
\le Cp^{3/2}L_{n}^{2}n^{-1/2}\frac{\e|\varepsilon_1|^{3}}{\sigma^{3}}.
\end{align}

Finally, noting that $L_{n}\ge \sqrt{c_l p}$, the proof is completed by combining the estimates for the four terms in the upper bound of (\ref{eq-k-01}) obtained in \eqref{eq-k-02}, \eqref{eq-tt-01}, \eqref{eq-tt-02} and \eqref{eq-aa-09}.
\end{proof}

\section*{Acknowledgements}
 We are grateful to Professor Qiman Shao for carefully reading the manuscript and for providing valuable suggestions for improvement.

\bibliographystyle{apalike}
\bibliography{refs}

\end{document}